\theoremstyle{plain}
\newtheorem{theorem}{Theorem}[section]
\newtheorem{corollary}[theorem]{Corollary}
\newtheorem{proposition}[theorem]{Proposition}
\newtheorem{assumption}[theorem]{Assumption}
\newtheorem{optimisation}[theorem]{Optimisation}
\theoremstyle{remark}
\newtheorem{definition}[theorem]{Definition}
\newtheorem*{example}{Example}
\newcommand{\C}{{\mathbb{C}}}
\newcommand{\E}{{\mathbb{E}}}
\newcommand{\Q}{{\mathbb{Q}}}
\newcommand{\V}{{\mathbb{V}}}
\renewcommand{\P}{{\mathbb{P}}}
\newcommand{\Z}{{\mathbb{Z}}}
\newcommand{\ba}{\bm{a}}
\newcommand{\bX}{\boldsymbol{X}}
\newcommand{\bY}{\bm{Y}}
\newcommand{\bx}{\bm{x}}
\newcommand{\bW}{\bm{W}}
\newcommand{\bC}{\bm{C}}
\newcommand{\bB}{\bm{B}}
\newcommand{\bmu}{\bm{\mu}}
\newcommand{\btmu}{\bm{\tilde\mu}}
\newcommand{\bnu}{\bm{\nu}}
\newcommand{\blambda}{{\bm{\lambda}}}
\newcommand{\bWlambda}{\bm{W^\lambda}}
\newcommand{\bdelta}{\bm{\delta}}
\newcommand{\bc}{{\bm{c}}}
\newcommand{\bd}{\bm{d}}
\newcommand{\bz}{{\bm{z}}}
\newcommand{\repone}{{r_\ep}}
\newcommand{\rep}{{\bm{r}_\ep}}
\newcommand{\Rep}{{R_\ep}}
\newcommand{\ostar}{\mathbin{\mathpalette\make@circled\star}}
\newcommand{\make@circled}[2]{%
  \ooalign{$\m@th#1\smallbigcirc{#1}$\cr\hidewidth$\m@th#1#2$\hidewidth\cr}%
}
\newcommand{\smallbigcirc}[1]{%
  \vcenter{\hbox{\scalebox{0.77778}{$\m@th#1\bigcirc$}}}%
}
\newcommand{\blog}{{\text{\textbf{log}}}}
\newcommand{\Qlh}{{\mathbb{Q}_{\blambda,\bm h}}}
\newcommand{\mfX}{\mathfrak{X}}
\newcommand{\Id}{{\mathds{1}}}
\newcommand{\R}{{\mathds{R}}}
\newcommand{\tT}{{t\in[0,T]}}
\newcommand{\D}{{\mathcal{D}}}
\newcommand{\F}{{\mathcal{F}}}
\newcommand{\mcP}{{\mathcal{P}}}
\newcommand{\Lg}{{\mathcal{L}}}
\newcommand{\mcQ}{{\boldsymbol{\mathcal{Q}}}}
\newcommand{\eone}{{\bm {\eta}_1}}
\newcommand{\etwo}{{\bm {\eta}_2}}
\newcommand{\estarone}{{\bm{\eta}^*_1}}
\newcommand{\estartwo}{{\bm{\eta}^*_2}}
\newcommand{\etaep}{{\boldsymbol{\eta}^*_\ep}}
\newcommand{\etaepone}{{\eta^*_\ep}}
\newcommand{\bg}{{\bm g}}
\newcommand{\bh}{{\bm h}}
\newcommand{\bmff}{{\bm{\mathfrak{f}}}}
\newcommand{\mff}{{\mathfrak{f}}}
\newcommand{\mfD}{{\mathfrak{D}}}
\newcommand{\mR}{{\mathcal{R}}}
\newcommand{\ep}{\varepsilon}
\newcommand{\VaR}{{\text{VaR}}}
\newcommand{\TVaR}{{\text{TVaR}}}
\newcommand{\dQP}{\frac{\diff\Q}{\diff\P}}
\newcommand{\norm}[1]{\left\lVert#1\right\rVert}
\newcommand\abs[1]{\left|#1\right|}
\newcommand{\diff}{\mathrm{d}}
\newcommand{\EDt}{{\mfD^{\bmff, \bdelta}_{t}}}
\newcommand{\EDo}{{\mfD^{\bmff, \bdelta}_{0}}}
\DeclareMathOperator{\Tr}{Tr}
\begin{document}

\begin{frontmatter}
\title{Minimal Kullback-Leibler Divergence for Constrained L\'evy-It\^o Processes}
\runtitle{Minimal KL for Constrained L\'evy-It\^o Processes}

\begin{aug}
\author[A,C]{\fnms{Sebastian}~\snm{Jaimungal}\ead[label=e1]{sebastian.jaimungal@utoronto.ca}\orcid{0000-0002-0193-0993}},
\author[A]{\fnms{Silvana M.}~\snm{Pesenti}\ead[label=e2]{silvana.pesenti@utoronto.ca}\orcid{0000-0002-6661-6970}}
\and
\author[B]{\fnms{Leandro}~\snm{S\'anchez-Betancourt}\ead[label=e3]{leandro.sanchez-betancourt@kcl.ac.uk}\orcid{0000-0001-6447-7105}}
\address[A]{Department of Statistical Sciences, University of Toronto\printead[presep={,\ }]{e1,e2}}

\address[B]{Department of Mathematics, King's College London\printead[presep={,\ }]{e3}}
\end{aug}
\address[C]{Oxford-Man Institute of Quantitative Finance, University of Oxford\vspace{2em}}
\today

\begin{abstract}
Given an $n$-dimensional stochastic process $\bX$ driven by $\mathbb{P}$-Brownian motions and Poisson random measures, we seek the probability measure $\mathbb{Q}$, with minimal relative entropy to $\mathbb{P}$, such that the $\mathbb{Q}$-expectations of some terminal and running costs are constrained. We prove existence and uniqueness of the optimal probability measure, derive the explicit form of the measure change, and characterise the optimal drift and compensator adjustments under the optimal measure. 
We provide an analytical solution for Value-at-Risk (quantile) constraints, discuss how to perturb a Brownian motion to have arbitrary variance, and show that pinned measures arise as a limiting case of optimal measures. The results are illustrated in a risk management setting -- including an algorithm to simulate under the optimal measure -- and explore an example where an agent seeks to answer the question: what dynamics are induced by a perturbation of the Value-at-Risk and the average time spent below a barrier on the reference process?
\end{abstract}


\begin{keyword}
 \kwd{Relative entropy}
 \kwd{Kullback-Leibler}
 \kwd{L\'evy-It\^o processes}
 \kwd{Reverse sensitivity}
 \kwd{Risk Management}
 \kwd{Model Uncertainty}
 \kwd{Cryptocurrency}
\end{keyword}

\end{frontmatter}

\section{Introduction}
We consider stochastic processes that follow L\'evy-It\^o dynamics under a reference probability measure $\P$ over a finite time horizon. The reference measure may arise in a data driven way and / or from modelling assumptions, however, it does not precisely capture all probabilistic beliefs of a modeller. In this work, misspecification under $\P$ are characterised via expected values of functions of the stochastic process at terminal time and expected running costs of the processes over the entire time horizon. To mitigate model error, we seek over all absolutely continuous probability measures, under which the process satisfies these constraints, the one which is closest to the reference measure $\P$ in relative entropy, also called Kullback-Leibler (KL) divergence. Thus, the key contribution of this work is solving the following constrained optimisation problem: Find the probability measure(s) that has minimal KL-divergence subject to constraints that can be written as (i) expected values of functions applied to the stochastic process at terminal time, and (ii) expected running costs of the processes over the entire time horizon. 

We proceed to solve the optimisation problem by first considering a related optimisation problem where we seek over a subset of probability measures. Specifically, the subset consists of equivalent probability measures that arise from Dol\'eans-Dade exponentials and we study this related problem using stochastic control techniques for L\'evy-It\^o processes. That is, we solve the dynamic programming equations and characterise a candidate solution (Proposition \ref{prop:candidate}), prove that the candidate solution is indeed the value function associated with the optimisation problem and that the resulting controls (which induce the optimal measure change) are admissible (Theorems \ref{thm: verification} and \ref{thm: E! of lagrange}).
Furthermore, we show that the optimal measure change can be written as the exponential of a collection of random variables corresponding to the constraints (Corollary \ref{cor:dQ-dP} and Theorem \ref{thm: E! of lagrange}). Finally, we prove that if a solution to the sub-problem (seeking over the subset of equivalent measures) exits, then  it is unique and, moreover, it is the unique solution to the original optimisation problem, where we seek over all absolutely continuous probability measures (Theorem \ref{thm:P-and-Pprime}). 

We illustrate the dynamics of the stochastic process under the optimal measure using multiple examples. For example, for the case of two Value-at-Risk (also known as quantile) constraints, we provide an analytical expression for the optimal measure change. We further show how to optimally change the dynamics of a Brownian motion to have zero mean and arbitrary variance. We also discuss the connection of the solution to our constrained optimisation problem to pinned measures; probability measures where the terminal value of the process lies almost surely within a Borel measurable set. While such measures are not equivalent with respect to $\P$ and thus do not fall into the set of admissible measures of our related problem, we derive them as a limiting case of solutions to our optimisation problem. We further consider infinitesimal perturbations; that is, we solve the problem where the constraints are equal to their $\P$-expectation plus $\ep$ multiplied with a direction $\bdelta$. In this setup, we prove  that the Lagrange multiplier is, up to order $\ep$, the inverse of the $\P$-covariance matrix of the constraint functions multiplied by $\ep$ and the direction of the perturbation. Using this result, we define a derivative -- termed \textit{entropic derivative} -- of a risk functional along constraints in direction of least relative entropy. As examples we show the connection of the entropic derivative to differential sensitivities of risk functionals such as the Tail-Value-at-Risk and distortion risk measures. Finally, we provide an algorithm for solving and simulating from the optimal probability measure and illustrate the numerical results on a running cost constraint in a financial setting on real data.

Studying minimal relative entropy subject to constraints has a long history starting with the seminal paper of \cite{Csiszar1975divergence}. Applications to model risk assessment include
\cite{Glasserman2014QF} which uses the relative entropy to quantify worst-case model errors in a static setting. Similarly and also in a static setting, \cite{Breuer2016MF} proposes to quantify distributional model risk by considering alternative models that lie within a KL-tolerance distance from a reference measure. The work in \cite{lam2016robust} investigates what happens in the limit of small KL-tolerances. Conceptually close to our work -- though in a static setting -- is \cite{Pesenti2019EJOR} which considers a reference probability measure and finds the probability measure that satisfies risk measure constraints with minimal relative entropy to the reference measure. None of these works, however, consider stochastic processes and thus do not consider running cost constraints.

The KL-divergence has many applications in financial mathematics. Starting with the influential work of \cite{schweizer1992mean}, the vast majority of the literature on minimising relative entropy focuses on its application for derivative pricing in incomplete markets. To avoid arbitrage, such questions require restricting to martingale measures. Articles \cite{avellaneda1997calibrating,avellaneda1998minimum}, for example, consider a reference model and seek over all equivalent martingale measures, in a simple diffusive setting, to ensure that a collection of prices of European contingent claims are matched correctly. Article \cite{cont2004nonparametric} extends \cite{avellaneda1998minimum} by using a  compound Poisson process (with discrete jump sizes) as a reference model. The work in 
\cite{Jeanblanc2007AAP} studies the problem of finding martingale measures for exponential L\'evy processes that minimise R\'enyi- and KL-divergences, and \cite{de2012convex} uses convex regularisation techniques, motivated by KL-divergence as a regulariser, to calibrate local volatility models. In this exposition, we consider a different problem in that we do not restrict to martingale measures but solve for the optimal dynamics of the process such that given constraints are fulfilled. In particular, in contrast to our setup, all of the above mentioned literature work with  risk-neutral measures (i.e., martingale measures). Moreover, the running cost constraint considered in this work is novel. A natural interpretation of the running costs constraint in mathematical finance is that of the average time spent below a barrier which we consider in the numerical example section.

Optimising the (relative) entropy has a long tradition and many applications in physics. Article \cite{jaynes1957information} for example investigates the problem of specifying expectations of observables (random variables) and seek over distributions (models) that match these expectations, and which maximise the Shannon entropy to obtain the model that best reflects the information contained in the expectations. This work has been extended in many directions, and for instance \cite{parrondo2009entropy} shows how relative entropy may inform about the arrow of time by looking at the relative entropy between the distribution of a process forward in time and its reversed version. As another example, \cite{shalymov2017dynamics} proposes a process for how a (physical) system may evolve to a state of minimal relative entropy, subject to an energy and mass constraint, based on the speed-gradient principle (see, e.g., \cite{fradkov2008speed}).

Calculations of the KL-divergence of processes has been studied by \cite{vajda1990distances}, which establishes that $f$-divergences, and hence the KL-divergence, between two probability measures on path space may be approximated by focusing on their finite dimensional distributions. An application to uncertainty quantification in a dynamic setting is 
\cite{dupuis2020sensitivity}, which uses a variational representation of the R\'enyi-divergence which encompasses the KL-divergence to provide uncertainty quantification bounds for rare events.

This paper is structured as follows. Section \ref{sec:model-setup} introduces the necessary notation and the two constraint optimisation problems we consider. We present a formal derivation of a candidate solution and a verification theorem in Section \ref{sec:verification}. Section \ref{sec:RN-representation} contains an alternative representation of the associated Radon-Nikodym derivatives and in Section \ref{sec:solution-opt} we state the existence and uniqueness of the solutions to both optimisation problems. Examples including analytical solution for Value-at-Risk (quantile) constraints, Brownian motion with arbitrary variance, and the connection of the solution to our optimisation problem to pinned measures are discussed in Section \ref{sec:examples}.
In Section \ref{sec:small-perturbation-lagrange} we consider infinitesimal perturbations, that is the optimisation problem where the constraints  their $\P$-expectations plus $\ep$ multiplied by a direction $\bdelta$, and derive the optimal Lagrange multiplier up to order $\ep$. 
In Section \ref{sec:entropic-derivative} we define the entropic derivative an relate it to differential sensitivities of risk functionals.
Section \ref{sec:numerical-ex} proposes an algorithm for calculating the dynamics of the process under the optimal measure, which we illustrate on a financial dataset and a running cost constraint.

\section{Optimisation Problem and its Solution}
\subsection{Model Setup and Optimisation Problems}\label{sec:model-setup}
We work on a complete filtered probability space  $(\Omega, \P, \F, \{\F_t\}_\tT)$ with time horizon $T>0$, and refer to $\P$ as the physical (or real-world) probability measure. On this space we introduce families of so-called L\'evy-It\^o processes. For aspects of the theory of such processes see \cite{applebaum2009levy,oksendal2019applied,bouzianis2021levy}. Here, we consider an $m$-dimensional $\P$-Brownian motion $\bW=(W^1,\dots,W^m)^\intercal$ and $l$ independent Poisson random measures (PRM) $\bmu(\diff t,\diff \bm z) = (\mu_1(\diff t, \diff z_1),\dots,\mu_l(\diff t, \diff z_l))^\intercal$, $t \in [0,T]$, $\bm z = (z_1,\dots,z_l)^\intercal$, associated with $l$ one-dimensional independent  L\'evy processes with finite second moments for all $\tT$. Further, we denote by $\bnu(\diff t, \diff \bm z) = (\nu_1(\diff t, \diff z_1),\dots,\nu_l(\diff t, \diff z_l))^\intercal$ the compensator of $\bmu$ and by $\btmu = \bmu-\bnu$ the compensated measure.\footnote{In the present framework $\bnu(\diff t, \diff \bm z)$ can be written as $\bnu(\diff \bm z)\diff t$ and we use them interchangeably.} That is, for any $i \in \D:= \{1, \ldots, l\}$, $\nu_i$ is the compensator associated with $\mu_i$ and $\tilde\mu_i = \mu_i -\nu_i$ the compensated random measure under $\P$. 

We consider an $n$-dimensional stochastic process $\bX:= (\bX_t)_\tT$  starting at $\bX_0 = \bx_0\in\R^n$ and which evolves according to the stochastic differential equation (SDE) under $\P$
\begin{equation}\label{eq: X dynamics ndim}
    \diff \bX_t = \bm\alpha(t,\bX_t)\,\diff t+\bm\sigma(t,\bX_t)\,\diff \bW_t + \int_{\R^l}\bm \gamma(t,\bX_{t^-},\bm z)\,\btmu(\diff t, \diff \bm z)\,,
\end{equation}
where $\bm\alpha:[0,T]\times\R^n \to \R^n$, $\bm\sigma:[0,T]\times\R^n \to \R^{n\times m}$, and $\bm\gamma:[0,T]\times\R^n\times\R^l \to \R^{n\times l}$ satisfy the standing Assumption \ref{assumption: lip and linear growth} below. 
Equation \eqref{eq: X dynamics ndim} is the matrix notation meaning that the $i$-th component of $(\bX_t)_{t\in [0,T]}$ satisfies the SDE under $\P$
\begin{align*}
    \diff X^i_t = \alpha_i(t,\bX_t)\,\diff t + \sum_{j=1}^{m} \sigma_{ij}(t,\bX_t)\,\diff W^j_t + \sum_{j\in \D}\; \int_{\R}\; \gamma_{ij}(t,\bX_{t^-}, z_j)\,\tilde\mu_j(\diff t,\diff z_j)\,
\end{align*}
with $\bm\alpha = (\alpha_1,\dots,\alpha_n)^\intercal$, $\bm\sigma = [\sigma_{ij}]_{i,j}$, and $\bm\gamma = [\gamma_{ij}]_{i,j}$, for $i \in \{1, \ldots, n\}$ and $j\in \{1, \ldots, m\}$. For $j \in \D$, we use the notation $\bm\gamma^{(j)} = (\gamma_{1,j}, \ldots \gamma_{n,j})$ to refer to the $j$-th column of $\bm\gamma$. Furthermore, we assume that each column $\bm\gamma^{(j)}$, $j\in\D$, depends on $\bm z$ only through $z_j$, i.e., $\bm\gamma^{(j)}(t,\bx,\bm z) \equiv \bm\gamma^{(j)}(t,\bx, z_j)$.

Throughout we use the following notation. For a function $\ell \in\mathcal{C}^{1,2}([0,T]\times \R^n; \R)$ we write $\nabla_{\bx} \ell$ for the vector of its partial derivatives and $\nabla^2_{\bx} \ell$ for the Hessian matrix of (mixed) second derivatives. We further define 
\begin{subequations}\label{eq:def-Delta-z}
\begin{align}
    \bm\Delta_{\bm z} \ell(t,\bx) 
    &= \left(\Delta^1_{z_1} \ell(t,\bx),\dots, \Delta^l_{z_l}\ell(t,\bx)\right)\,,
    \quad \text{where}
    \\
    \Delta^j_{z_j} \ell(t,\bx) 
    &:= \ell\left(t,\,\bx+\bm\gamma^{(j)}(t,\bx, z_j)\right)-\ell(t,\bx)\,,
    \quad  j\in\D \,.
\end{align}
\end{subequations}

The next assumption guarantees that the stochastic process $(\bX_t)_{t\in [0,T]}$ given in \eqref{eq: X dynamics ndim} is well-defined.
\begin{assumption}\label{assumption: lip and linear growth}
The functions $\bm\alpha:[0,T]\times\R^n \to \R^n$, $\bm\sigma:[0,T]\times\R^n \to \R^{n\times m}$, and $\bm\gamma:[0,T]\times\R^n\times\R^l  \to \R^{n\times l}$  satisfy the usual linear growth and Lipschitz continuity conditions. That is for all $t \in [0,T]$ and $\bx\in\R^n$ there exists $C_1<\infty$ such that 
\begin{equation*}
    \norm{\bm\sigma(t,\bx)}^2 
    +
    |\bm\alpha(t,\bx)|^2
    +
    \int_{\R}\;\sum_{j\in\D}\; |\bm\gamma^{(j)}(t,\bx, z_j)|^2\nu_j(\diff z_j) \leq C_1\,(1+|\bx|^2)\,,
\end{equation*}
where $|\cdot|$ is the Euclidean norm and $\norm{\bm \sigma}^2 = \sum_{ij}\sigma_{ij}^2$ the Frobenius norm.
Moreover, for all $t \in [0,T]$, $\bx\in\R^n$, and $\bm y \in\R^n$ there exists $C_2<\infty$ such that 
\begin{equation*}
\begin{split}
    \norm{\bm\sigma(t,\bx) - \bm\sigma(t,\bm y)}^2 &+ |\bm\alpha(t,\bx) - \bm\alpha(t,\bm y)|^2
    \\
    &+ \int_{\R}\;\sum_{j\in\D}\; |\bm\gamma^{(j)}(t,\bx, z_j)-\bm\gamma^{(j)}(t,\bm y, z_j)|^2\nu_j(\diff z_j) \leq C_2\,|\bx - \bm y|^2\,.
\end{split}
\end{equation*}
\end{assumption}

As a consequence of Assumption \ref{assumption: lip and linear growth} and by Theorem 1.19 in \cite{oksendal2019applied}, there exists a unique càdlàg adapted process starting at $\bX_0 = \bx_0\in\R^n$ that satisfies the SDE in \eqref{eq: X dynamics ndim}; we refer to that process as this unique càdlàg solution  $(\bX_t)_\tT$. Moreover, it holds for all $\tT$ that
\begin{equation*}
    \E\left[ |\bX_t|^2\right]<\infty\,.
\end{equation*}

We use the Kullback-Leibler (KL) divergence also called relative entropy to quantify the distance between probability measures. Recall that the KL-divergence of a probability measure $\Q$ with respect to $\P$ is given by
\begin{equation*}
D_{KL} \left(\Q ~||~ \P \right)
=
\begin{cases}
\;\E\left[\,\dQP \,\log  \dQP  \,\right]
\quad &\text{if} \quad \Q \ll\P\\[1em]
\; \infty & \text{otherwise}\,,
\end{cases}
\end{equation*}
where we use the convention that $0 \log 0 = 0$. For a probability measures $\Q$ we write $\E^\Q[\cdot]$ when we consider the $\Q$-expectation and for notational simplicity set $\E[\cdot] := \E^\P[\cdot]$.

Now we are ready to formally introduce the optimisation problem which we will solve in the subsequent sections.
\begin{optimisation}
For functions $f_j,g_i:\R^n\to\R$ and constants $c_j, d_i\in\R$,  with $j \in \mR_1:=\{ 1,2,\dots,r_1\}$, $i \in \mR_2:=\{ 1,2,\dots,r_2\}$, we consider the optimisation problem\begin{equation}\tag{$P$}\label{opt}
\begin{aligned}
    \inf_{\Q \ll \P} D_{KL} \left(\Q ~||~ \P \right)
    \quad \text{subject to}
    \quad
    & \E^{\Q}\left[f_j(\bX_T)\right]=c_j\,, \;\;\forall\;j\in\mR_1\,, \quad \text{and}
    \\
    & 
    \E^{\Q}\left[\int_0^T g_i(\bX_s)\,\diff s\right]=d_i\,,\;\; \forall \; i\in\mR_2,
\end{aligned}
\end{equation}
where the infimum is taken over probability measures that are absolutely continuous with respect to $\P$. 
\end{optimisation}
For $j \in \mR_1$ and $i \in \mR_2$, we call the equations $\E^{\Q}\left[f_j(\bX_T)\right]=c_j$ and $\E^{\Q}\left[\int_0^T g_i(\bX_s)\,\diff s\right]=d_i$ constraints, and $f_j$ and $g_i$ constraint functions.

Before solving the optimisation problem \eqref{opt} we study the following closely related problem. Specifically, we consider optimisation problem \eqref{opt} however seek only over a subset of equivalent probability measures -- the set of equivalent probability measures characterised by Doléans-Dade exponentials. For this, we define the following sets of stochastic processes:
\begin{align*}
    \mcP_2\left([0,T]\right) 
    &:= \Big\{ \blambda ~\Big| ~
     \blambda: = (\blambda_t)_\tT \,\text{ is }\,\R^m\text{-valued }\F \text{-adapted and }
    \\[0.5em]
    & \qquad \qquad
    \E\left[\int_0^T |\blambda_t|^2\, \diff t \right] <\infty \Big\}\,,
\end{align*}
and
\begin{align*}
    \mcP_2([0,T]\times \R^l;\bm\nu) 
    &:= \Big\{\bh ~\Big|~ \bh: = (\bh_t(\bm z))_\tT \,\text{ is }\,\R^l \text{-valued, } \text{predictable, }
    \\[0.5em]
    & \qquad \qquad
    \bh_t(\bm z) = (h^1_t(z_1),\dots, h^l_t(z_l))\,, \quad \bh_t( \bm z)\leq 1\,, \quad \text{and} 
    \\[0.5em]
    &  \qquad \qquad
    \E\left[\int_0^T \int_{\R^l}  \left( \bh_t(\bm z)\odot\bh_t(\bm z)\right) \,\bnu(\diff \bm z) \diff t \right] <\infty \Big\}\,.
\end{align*}
Here $\odot$ stands for the Hadamard product for vectors, which is defined for $\bm x, \bm y \in \R^n$ by $\bm x \odot \bm y = (x_1\,y_1, \dots, x_n\,y_n)$, and the inequality $\bh_t( \bm z)\leq 1$  is to be understood componentwise.
For $\blambda \in\mcP_2\left([0,T]\right)$ and $\bh \in \mcP_2\left([0,T]\times \R^l;\nu\right)$, we define the process $Z^{\blambda,\bh}=(Z^{\blambda,\bh}_t)_{\tT}$, given for $\tT$ by
\begin{align}
\begin{split}
    Z^{\blambda,\bh}_t :=\exp\bigg( &- \int_0^t \blambda_s\,\diff \bW_s - \tfrac12 \,\int_0^t|\blambda_s|^2\,\diff s \\
    & +  \int_0^t \int_{\R^l} \blog(\bm 1-\bh_s(\bm z))\,\btmu(\diff s,\diff \bm z) \\
    & + \int_0^t \int_{\R^l} \{\blog(\bm 1-\bh_s(\bm z)) +\bh_s(\bm z)\}\,\bnu(\diff \bm z)\,\diff s\,\bigg)\,,\label{eq: Z_t for ndim case vect notation}
\end{split}
\end{align}
where $\blog(\bm 1- \bh_s(\bm z)):= \left(\,\log(1-h^1_s(z_1)),\dots, \log(1-h^l_s(z_l))\,\right)$. This process is a Dol\'eans-Dade exponential and with, e.g., the Novikov assumption, defines a Radon-Nikodym (RN) derivative. We recall the Novikov's condition on $ Z^{\blambda,\bh}$ which is
\begin{align*}
    \E\bigg[\exp\bigg( &\tfrac12 \,\int_0^T |\blambda_t|^2\,\diff t + \int_0^T\int_{\R^l} \bh_t(\bm z)\odot \bh_t(\bm z)\,\bmu(\diff t,\diff \bm z)\,\bigg)\bigg]<\infty\,,
\end{align*}
and which establishes sufficient conditions on $\blambda $ and $\bh $ such that $\E[Z^{\blambda,\bh}_T]=1$ and $(Z^{\blambda,\bh}_t)_{0\leq t \leq T}$ is a martingale. Thus, the measure $\Qlh$  characterised by the RN-derivative
$$\frac{\diff \Qlh }{\diff \P} = Z^{\blambda,\bh}_T$$
is a probability measure that is absolutely continuous with respect to $\P$; see Theorem 1.36 in \cite{oksendal2019applied}. We denote the probability measure $\Qlh$ by subscripts to indicates that it arises from an $\R^m$-valued process $\blambda$ and an $\R^l$-valued random field $\bh$.

With the above definitions we are ready to introduce a subset of absolutely continuous probability measures with respect to $\P$ given in \eqref{opt}, that are characterised by RN-densities $Z^{\blambda,\bh}$ with $\blambda \in\mcP_2\left([0,T]\right)$ and $\bh \in \mcP_2\left([0,T]\times \R^l;\nu\right)$
\begin{equation}\label{eq:Q-set}
\begin{split}
\mcQ :=
\Bigg\{\mathbb Q_{\blambda,\bh} 
&~ \Big|~\; \diff \Qlh = Z^{\blambda,\bh}_T\,\diff \P 
 \;\text{ s.t. }\; 
\E\left[Z^{\blambda,\bh}_T\right]=1,\,\; \E\left[\abs{Z^{\blambda,\bh}_T\,\log Z^{\blambda,\bh}_T}\right]<\infty\,,
\\
&\E^{\Qlh}\left[\abs{f_j(\bX_T)}\right]<\infty\,, \text{ }\,\forall\;j \in\mR_1\,,\quad \E^{\Qlh}\left[\int_0^T \abs{g_i(\bX_s)}\,\diff s\right]<\infty\,, \text{ }\,\forall\;i \in\mR_2\\
&\qquad\E^\Qlh\left[\sup_{t\in[0,T]} \abs{\bX_t}^2\right]<\infty\,,
 \text{ and } \blambda \in \mcP_2\left([0,T]\right)\,, \;
\bh \in \mcP_2\left([0,T]\times \R^l;\nu\right)
\Bigg\}\,.
\end{split}
\end{equation}
Note that we do not assume Novikov's condition in \eqref{eq:Q-set}.

Using the above class of equivalent probability measures, we consider the following optimisation problem, which is optimisation problem \eqref{opt} but where we seek over the subset of probability measures $\mcQ$.
\begin{optimisation}
For functions $f_j,g_i:\R^n\to\R$ and constants $c_j, d_i\in\R$, with $j \in \mR_1$, $i \in \mR_2$, we consider the optimisation problem
\begin{equation}\label{opt-Z-lambda-h}
 \tag{$P^\prime$}
\begin{aligned}
    \inf_{\Qlh\in\mcQ} \E\left[Z^{\blambda,\bh}_T\,\log Z^{\blambda,\bh}_T\right]
    \quad \text{subject to}
    \quad
    & \E^{\Qlh}\left[f_j(\bX_T)\right]=c_j\,, \text{ }\,\forall\;j\in\mR_1\,, \quad \text{and}
    \\
    &
    \E^{\Qlh}\left[\int_0^T g_i(\bX_s)\,\diff s\right]=d_i\,, \text{ }\,\forall\;i\in\mR_2\,.
\end{aligned}
\end{equation}
\end{optimisation}

For $\Qlh\in\mcQ$, and as a consequence of  Girsanov's Theorem, $\bWlambda$ defined by
\begin{equation*}
    \bWlambda_t := \int_0^t \blambda^{\intercal}_s\,\diff s + \bW_t
\end{equation*}
is an $m$-dimensional $\Qlh$-Brownian motion and the $\Qlh$-compensator of
$\bmu $ is
\begin{equation*}
 \bnu^{\bh}(\diff t, \diff \bm z)
 :=
 (\bm 1-\bh_t( \bm z))^{\intercal}\odot\,\bnu(\diff \bm z)\,\diff t   \,.
\end{equation*}
For notational simplicity  we write $\bWlambda$ and $\bnu^{\bh}$ as they only explicitly depend on $\blambda$ and $\bh$, respectively.

Using the above results, the KL-divergence from $\Qlh$ to $\P$ becomes
\begin{equation*}
    \begin{split}
        &D_{KL}\left(\Qlh ~||~ \P\right)
        \\
        & \quad=\E^{\Qlh}\left[\tfrac{1}{2}\int_0^T |\blambda_t|^2\,\diff t  \right.
        + \left.\int_0^T\int_{\R^l} \big[\blog(\bm 1-\bh_t(\bm z))\odot(\bm 1-\bh_t(\bm z))  + \bh_t(\bm z) \big]\,\bnu(\diff \bm z) \diff t  \right].
    \end{split}
\end{equation*}

Next, we discuss assumptions needed for the existence and uniqueness of the Lagrangian associated with optimisation problem \eqref{opt-Z-lambda-h}, which we introduce in the next section. For this we first define the moment generating function (mgf) and the cumulant generating function (cgf) for random vectors. For a random vector $\bY = (Y_1, \ldots, Y_k)$, $k\in \R$, we define the set 
\begin{equation*}
    D_{\bY}:=\left\{\ba\in\R^{k} ~\big|~ \E\left[\exp(\ba\,\cdot \,\bY)\right]<\infty\right\}^\circ\,,
\end{equation*}
where $\{\}^\circ$ denotes the interior of a set. We note that $D_{\bY}$ is the interior of a convex set.
If $D_{\bY} \neq \emptyset$, then the mgf $M_{\bY}$ and cgf $K_{\bY}$ of $\bY$ at $\ba \in D_{\bY}$ exist and are respectively given by
\begin{equation*}
    M_{\bY} (\ba) 
    = \E[\,\exp(\ba\,\cdot \bY)\,]
    \qquad \text{and}\qquad
    K_{\bY}(\ba) 
    = \log M_{\bY} (\ba)\,.
\end{equation*}

\begin{assumption}\label{assumption: existence! Lagrange}
Let $\mfX$ denote the $(r_1+r_2)$-dimensional random vector given by
\begin{equation*}
    \mfX 
    :=  \left(\,\bm f(\bX_T)-\bm c,\,\int_0^T\bg(\bX_s)\,\diff s-\bd\,\right)\,,
\end{equation*}
where $\bm f(\bX_T):= (f_1(\bX_T),\dots,f_{r_1}(\bX_T))$, $\bg(\bX_T):= (g_1(\bX_T),\dots,g_{r_2}(\bX_T))$, and constants $\bm c := (c_1,\dots,c_{r_1})$, and  $\bm d := (d_1,\dots,d_{r_2})$. Here the integral in $\int_0^T\bg(\bX_s)\diff s$ is understood to be applied componentwise.
We assume that $D_{\mfX}\neq \emptyset$ and that there exists $\ba$ such that 
\begin{equation}\label{eq: condition lagrange}
    \nabla_{\ba} K_{\mfX}(-\ba) = \bm 0\,.
\end{equation}
\end{assumption}

In the next sections we first solve optimisation problem \eqref{opt-Z-lambda-h} and then show that its solution, if it exists, is the also the solution to optimisation problem \eqref{opt}. To solve optimisation problem \eqref{opt-Z-lambda-h} we next proceed by presenting a formal derivation of a candidate solution and a verification theorem.

\subsection{Candidate Solution and Verification}\label{sec:verification}
We proceed with a formal derivation of a candidate for the value function associated with the constrained optimisation problem \eqref{opt-Z-lambda-h}. After, we provide a verification theorem that allows us to conclude that the candidate solution is indeed the value function. 

Let $(\eone, \etwo)\in D_{-\mfX}$ with $\eone:=(\eta_1,\dots,\eta_{r_1}) \in \R^{r_1}$ and $\etwo:=(\eta_{r_1+ 1},\dots,\eta_{r_1 + r_2}) \in \R^{r_2}$, then the Lagrangian of the constrained problem \eqref{opt-Z-lambda-h} with Lagrange multipliers $\bm\eta_1$ and $\bm \eta_{2}$ is given by
\begin{equation*}
    L^{\blambda,\bh} := \E^{\Qlh}\left[\log Z^{\blambda,\bh}_T + \eone \cdot \left(\bm f(\bX_T)-\bm c\right) + \etwo \cdot \left(\int_0^T \bg(\bX_s)\diff s - \bd\right)\right]\,,
    \label{eqn:multi Functional-Lagrange}
\end{equation*}
where $\,\cdot\,$ denotes the dot product.
We define for a fixed control pair $(\blambda,\bh)$, the value $J^{\blambda,\bh}:[0,T]\times\R^n \to \R$ associated with the Lagrangian $L^{\blambda,\bh}$ by 
\begin{equation}
    \label{eqn:multi Functional-Lagrange-t}
    \begin{split}
        J^{\blambda,\bh}(t,\bx) :=  \E^{\Qlh}_{t,\bx}\Bigg[ \,&  \tfrac{1}{2}\int_t^T |\blambda_t|^2\,\diff t 
        \\
        &  + \int_t^T\int_{\R^l} \big[\blog(\bm 1-\bh_t(\bm z))\odot(\bm 1-\bh_t(\bm z))  + \bh_t(\bm z) \big]\,\bnu(\diff \bm z) \diff t  
        \\
        &  + \eone \cdot\left(\bm f(\bX_T)-\bm c\right) + \etwo\cdot\left(\int_t^T \bg(\bX_s)\diff s - \bd\right)\,\Bigg]\,,
    \end{split}
\end{equation}
where $\E^{\Qlh}_{t,\bx}[\cdot]$ denotes the $\Qlh$-expectation conditioned on the event $\bX_{t}=\bx$. Observe that the expectation in \eqref{eqn:multi Functional-Lagrange-t} is finite because of the definition of $\mcQ$ -- recall that $D_{KL}(\Qlh~ || ~ \P) = \E\left[{Z^{\blambda,\bh}_T\,\log Z^{\blambda,\bh}_T}\right]<\infty$.
We further define the optimal value function, which we often just refer to as the value function, by
\begin{equation}
    J(t,\bx) := \inf_{\substack{\blambda, \bh, \; s.t. \\ 
    \Qlh\in\mcQ
    }} J^{\blambda,\bh}(t,\bx)\,.
    \label{eqn:value-function-def}
\end{equation}
For the purposes of the formal derivation we assume that the infimum in \eqref{eqn:value-function-def} is finite.
We observe that as a consequence of the dynamic programming principle and It\^o's formula -- under the assumption that $J\in\mathcal{C}^{1,2}([0,T)\times \R^n; \R)\cap \mathcal{C}^{0}([0,T]\times \R^n; \R)$ -- we have the following dynamic programming equation (DPE) 
\begin{multline}\label{eqn:multi HJB}
    \partial_t J(t,\bx)+ \inf_{\blambda, \bh} \Bigg\{ \Lg^{\blambda,\bh} J(t,\bx) + \tfrac12 |\blambda|^2 
    \\
    \qquad \qquad  \quad + \int_{\R^l} \big[\blog(\bm 1-\bh_t(\bm z))\odot(\bm 1-\bh_t(\bm z))  + \bh_t(\bm z) \big]\,\bnu(\diff \bm z) +\etwo \cdot \bg(\bx)\Bigg\} =0\,, \\
    J(T,\bx) = \eone \cdot \left(\bm f(\bx)-\bm c\right) - \etwo\cdot\,\bd \,,
\end{multline}
where the linear operator $\Lg^{\blambda,\bh}$ is the $\Qlh$-generator of $\bX$, and acts on functions as follows
\begin{align*}
\begin{split}
       \Lg^{\lambda,h}J(t,\bx)  &=\left(\bm \alpha(t,\bx) - \bm\sigma(t,\bx)\,\blambda^{\intercal}\right)\cdot\,\nabla_{\bx}J + \tfrac{1}{2}\,\Tr\left(\bm\sigma(t,\bx)\,\bm\sigma(t,\bx)^{\intercal}\,\nabla^2_{\bx} J\right)
       \\
    &\quad + \int_{\R^l} \bm\Delta_{\bm z}J(t,\bx)\, \bnu^{\bh}(\diff \bm z)  - \int_{\R^l} (\nabla_{\bx}J)^{\intercal}\,\bm\gamma(t,\bx,\bm z)\,\bnu(\diff \bm z)\,,
\end{split}
\end{align*}
where $\bm\Delta_{\bm z}J(t,\bx)$ is defined in \eqref{eq:def-Delta-z}.
The specific form of the DPE follows from writing \eqref{eq: X dynamics ndim} in terms of $\bW^\blambda$ and $\btmu^{\bh} = \bmu - \bnu^{\bh}$, so that
\begin{multline*}
    \diff \bX_t 
    = 
    \left(\bm\alpha(t,\bX_t) - \bm\sigma(t,\bX_t)\,\blambda_t^{\intercal}   - \int_{\R^l} \bm\gamma(t,\bX_{t^-},\bm z) \left[\bh^{\intercal}_t(\bm z) \odot \bnu(\diff \bm z)\right] \right)\,\diff t
    \\
     +\bm\sigma(t,\bX_t)\,\diff \bWlambda_t + \int_{\R^l}\bm \gamma(t,\bX_{t^-},\bm z)\,\btmu^{\bh}(\diff t, \diff \bm z)\,.
\end{multline*}
The measurable global minimisers $\blambda^\dagger$ and $\bh^\dagger$ (in feedback form) of the infimum in \eqref{eqn:multi HJB}  are given by
\begin{align*}
    \blambda^\dagger(t,\bx) &= \big(\nabla_{\bx} J(t,\bx)\big)\, \bm\sigma(t, \bx)\,,
    \\
    \bh^\dagger(t,\bx,\bm z)&= \bm 1 - \bm{e}^{-\bm\Delta_{\bm z}J(t,\bx)}\,,
\end{align*}
where $ \bm 1 - \bm{e}^{-\bm\Delta_{\bm z}J(t,\bx)}$ stands for $\left(1-e^{-\Delta^{1}_{z_1} J(t,\bx)},\dots,1-e^{-\Delta^{l}_{z_l} J(t,\bx)}\right)$.

It follows immediately that $\bh^\dagger$ is componentwise bounded from above by unity. Inserting the optimal controls $\blambda^\dagger$ and $\bh^\dagger$ in feedback form back into the DPE \eqref{eqn:multi HJB} (omitting the arguments $(t,\bx)$ when possible)  we obtain that
\begin{multline}
\label{eq: multi PDE vnot1}
    \partial_t J - \tfrac{1}{2}\,|\nabla_{\bx}J\,\bm\sigma|^2 
    + \bm\alpha\cdot \nabla_{\bx}J - \int_{\R^l} (\nabla_{\bx}J)^{\intercal}\,\bm\gamma(t,\bx,\bm z)\,\bnu(\diff \bm z)
    \\
    \quad+ \tfrac{1}{2}\,\Tr\left(\bm\sigma\,\bm\sigma^{\intercal}\,\nabla^2_{\bx}J\right)  +\int_{\R^l} \left( \bm 1 - \bm{e}^{-\bm\Delta_{\bm z}J} \right) \bnu(\diff \bm z)  +\etwo \cdot \bg = 0\,
\end{multline}
together with the terminal condition $J(T,\bx) = \sum_{j=1}^{r_1} \eta_j\,(f_j(\bx)-c_j) - \sum_{i =r_1+1}^{r_2}\eta_i\,d_i$.
We observe that \eqref{eq: multi PDE vnot1} can be written as
\begin{multline}
    \partial_t J + \Lg^c J - \tfrac{1}{2}\,|\nabla_{\bx}J\,\bm\sigma|^2 
    -
    \int_{\R^l} (\nabla_{\bx}J)^{\intercal}\,\bm\gamma(t,\bx,\bm z)\,\bnu(\diff \bm z)
     \\
     +
     \int_{\R^l} \left( \bm 1 - \bm{e}^{-\bm\Delta_{\bm z}J} \right) \bnu(\diff \bm z)  +
     \etwo \cdot \bg =0\,,\label{eq: multi PDE vnot2}
\end{multline}
where the linear operator $\Lg^c$ is the $\P$-generator of the continuous part\footnote{The continuous part of $\bX$ is defined by $\bX^c_t:=\bX_t- \sum_{0\le s\le t} \Delta \bX_s$, $\Delta\bX_t:=\bX_t-\bX_{t^-}$, where $\bX_{t^-}:=\lim_{s\uparrow t}\bX_s$. } of $\bX$ and acts on functions as follows
\begin{equation*}
    \Lg^c J = \bm\alpha\cdot \nabla_{\bx}J + \tfrac{1}{2}\,\Tr\left(\bm\sigma\,\bm\sigma^{\intercal}\,\nabla^2_{\bx}J\right)\,.
\end{equation*}
Next, we construct a candidate of the solution to \eqref{eq: multi PDE vnot2} by introducing the change of variables $J(t,\bx)=-\log \omega(t,\bx)$. Hence,
\begin{equation*}
    \partial_tJ =- \frac{\partial_t\omega}{\omega},
\quad
    \nabla_{\bx}J = -\frac{1}{\omega}\,\nabla_{\bx}\omega,
\quad
    \nabla^2_{\bx}J = -\frac{1}{\omega}\,\nabla^2_{\bx}\omega +\frac{1}{\omega^2}(\nabla_{\bx}\omega)^{\intercal}\,\nabla_{\bx}\omega\,,
\qquad
\end{equation*}
and furthermore $\Delta^{j}_{z_j} J(t,\bx) =  \log \left(w(t,\bx)/w(t,\bx+\bm\gamma^{(j)}(t,\bx, z_j))\right)$. Equation \eqref{eq: multi PDE vnot2} thus becomes
\begin{multline}
    -\frac{1}{\omega}\left\{ \partial_t \omega + \bm\alpha\cdot\nabla_{\bx}\omega + \tfrac{1}{2}\Tr\left(\bm\sigma\,\bm\sigma^{\intercal}\,\nabla^2_{\bx}\omega\right)- \int_{\R^l} (\nabla_{\bx}\omega)^{\intercal}\,\bm\gamma\,\bnu(\diff \bm z) + \int_{\R^l} \bm\Delta_{\bm z}\omega\, \bnu(\diff \bm z) \right\}
    \\
    + \frac{1}{2\,\omega^2}\,\Tr\left(\bm\sigma\,\bm\sigma^{\intercal}\,(\nabla_{\bx}\omega)^{\intercal}\,\nabla_{\bx}\omega\right) - \frac{1}{2\,\omega^2}\left|\nabla_{\bx}\omega\,\bm\sigma\right|^2 + \etwo \cdot \bg = 0\,,\label{eq: PDE transformation 1}
\end{multline}
with $\omega(T,\bx) = \exp\left(-\eone \cdot \left(\bm f(\bx)-\bm c\right) + \etwo\cdot\bd\right)$.
Multiplying \eqref{eq: PDE transformation 1} by $-\omega(t,\bx)$, we have that
\begin{align*}
    &  \partial_t \omega + \bm\alpha\cdot\nabla_{\bx}\omega + \tfrac{1}{2}\Tr\left(\bm\sigma\,\bm\sigma^{\intercal}\,\nabla^2_{\bx}\omega\right)- \int_{\R^l} (\nabla_{\bx}\omega)^{\intercal}\,\bm\gamma\,\bnu(\diff \bm z) + \int_{\R^l} \bm\Delta_{\bm z}\omega\, \bnu(\diff \bm z) - \etwo \cdot \bg \,\omega = 0\,,
\end{align*}
where we use the fact that
\begin{equation*}
    \frac{1}{2\,\omega^2}\,\Tr\left(\bm\sigma\,\bm\sigma^{\intercal}\,(\nabla_{\bx}\omega)^{\intercal}\,\nabla_{\bx}\omega\right) - \frac{1}{2\,\omega^2}\left|\nabla_{\bx}\omega\,\bm\sigma\right|^2  = 0\,.
\end{equation*}
By the Feynman-Kac representation, we conclude that $\omega(t,\bx)$ can be written as
\begin{equation*}
    \omega(t,\bx) = \E_{t,\bx}\left[ \exp\left( -\eone \cdot \left(\bm f(\bX_T)-\bm c\right) 
    -
    \etwo \cdot \left(\int_t^T \bg(\bX_u)\diff u - \bd\right)\right)  \right]\,.
\end{equation*}

The above formal calculations provide  the following candidate solution for the value function.
\begin{proposition}
\label{prop:candidate}
A candidate solution to the value function \eqref{eqn:value-function-def}  is given by
\begin{equation*}
    J(t,\bx) = -\log \,\E_{t,\bx}\left[ \exp\left( -\eone \cdot \left(\bm f(\bX_T)-\bm c\right) 
-
    \etwo \cdot\left(\int_t^T \bg(\bX_u)\diff u - \bd\right)\right)  \right]\,,
\end{equation*}
with the optimal Markovian controls given by
\begin{align*}
    \blambda_t := \nabla_{\bx} J(t,\bX_t)\,\bm\sigma(t,\bX_t)
    \qquad \text{and} \qquad
    \bh_t(\bm z) := \bm 1 - \bm{e}^{-\bm\Delta_{\bm z}J(t,\bX_{t^-})} \,.
\end{align*}
\end{proposition}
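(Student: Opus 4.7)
The plan is to turn the formal derivation preceding the proposition into an argument that establishes the claimed representation of $J$ and identifies the Markovian minimisers. I would begin by defining
\begin{equation*}
    \omega(t,\bx) := \E_{t,\bx}\!\left[\,\exp\!\left(-\eone\cdot(\bm f(\bX_T)-\bm c) - \etwo\cdot\!\left(\int_t^T\bg(\bX_u)\,\diff u - \bd\right)\right)\right]
\end{equation*}
directly, and appealing to Assumption \ref{assumption: existence! Lagrange} (so that $D_{\mfX}$ is non-empty) together with the Markov property of $\bX$ to argue that $\omega$ is finite, strictly positive, and $\mathcal{C}^{1,2}([0,T)\times\R^n;\R)\cap\mathcal{C}^0([0,T]\times\R^n;\R)$ for $(\eone,\etwo)$ lying in the requisite neighbourhood. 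Then the classical Feynman-Kac representation for L\'evy-It\^o processes (see, e.g., Theorem 1.22 in \cite{oksendal2019applied}) yields that $\omega$ solves the linear PIDE
\begin{equation*}
    \partial_t \omega + \bm\alpha\cdot\nabla_{\bx}\omega + \tfrac{1}{2}\Tr(\bm\sigma\bm\sigma^{\intercal}\nabla^2_{\bx}\omega) - \int_{\R^l}(\nabla_{\bx}\omega)^{\intercal}\bm\gamma\,\bnu(\diff\bm z) + \int_{\R^l}\bm\Delta_{\bm z}\omega\,\bnu(\diff\bm z) - \etwo\cdot\bg\,\omega = 0
\end{equation*}
with terminal condition $\omega(T,\bx) = \exp(-\eone\cdot(\bm f(\bx)-\bm c) + \etwo\cdot\bd)$.

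Next, I would set $J(t,\bx) := -\log\omega(t,\bx)$ and verify by the chain rule that $J$ satisfies the nonlinear HJB equation \eqref{eq: multi PDE vnot2}; the key algebraic cancellation is that the two quadratic $\nabla_{\bx}\omega$ terms arising from $\nabla^2_{\bx}J$ coincide with $\tfrac{1}{2\omega^2}|\nabla_{\bx}\omega\,\bm\sigma|^2$, while the jump term reorganises via the identity $\Delta^{j}_{z_j}J = -\log\!\big(\omega(\cdot,\bx+\bm\gamma^{(j)})/\omega(\cdot,\bx)\big)$ into $\int_{\R^l}(\bm 1 - e^{-\bm\Delta_{\bm z}J})\,\bnu(\diff\bm z)$ as required. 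This already reverses the change of variables performed in the heuristic derivation, and shows that the proposed $J$ is a classical solution of the DPE \eqref{eqn:multi HJB}.

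Finally, the Markovian minimisers are extracted by pointwise first-order conditions on the infimand inside \eqref{eqn:multi HJB}: differentiating in $\blambda$ produces the linear equation $\blambda - (\nabla_{\bx}J)\bm\sigma = \bm 0$, and differentiating componentwise in $\bh$ gives $-\log(1-h^j) - \Delta^j_{z_j}J = 0$, i.e.\ $h^j = 1 - e^{-\Delta^j_{z_j}J}$. Strict convexity in $\blambda$ (a quadratic) and in each component of $\bh$ (since $(1-h)\log(1-h)+h$ has second derivative $1/(1-h) > 0$ on $(-\infty,1)$) guarantees that these critical points are global minimisers, and in particular $\bh^\dagger_t(\bm z) \le \bm 1$ componentwise as needed for admissibility in $\mcP_2([0,T]\times\R^l;\bm\nu)$. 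The main obstacle I anticipate is not the manipulation itself but the regularity of $\omega$: Assumptions \ref{assumption: lip and linear growth} and \ref{assumption: existence! Lagrange} alone are insufficient to guarantee $\mathcal{C}^{1,2}$ smoothness in the presence of jumps with general $\bm\gamma$ and only integrable $\bm f,\bg$, so I would mark the proposition as a candidate and defer the full value-function identification and verification of admissibility to Theorem \ref{thm: verification}, where an It\^o-type argument applied to $J(t,\bX_t)$ under $\Qlh$ circumvents the regularity issue.
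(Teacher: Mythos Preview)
Your proposal is correct and follows essentially the same route as the paper's derivation, just traversed in the opposite direction: the paper starts from the DPE \eqref{eqn:multi HJB}, computes the pointwise minimisers, substitutes them to obtain the nonlinear equation \eqref{eq: multi PDE vnot2}, applies the Cole--Hopf substitution $J=-\log\omega$ to linearise, and then invokes Feynman--Kac to identify $\omega$; you instead define $\omega$ by the expectation, use Feynman--Kac to obtain the linear PIDE, reverse the substitution, and then read off the minimisers. Both treatments are explicitly formal and defer regularity and admissibility to Theorem \ref{thm: verification}; your added convexity check for the pointwise minimisation is a minor sharpening that the paper leaves implicit.
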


Next, we prove that, under certain conditions and for fixed Lagrange multipliers, this candidate solution does indeed coincide with the value function.
\begin{theorem}[Verification]\label{thm: verification}
Under Assumption \ref{assumption: lip and linear growth}, let $D_{-\mfX} \neq \emptyset$ and $(\eone,\etwo)\in D_{-\mfX}$. Define
\begin{equation*}
    J^\dagger(t,\bx) := -\log\omega^\dagger(t,\bx)\,,
\end{equation*}
where 
\begin{equation}
\label{eq:omega-star}
    \omega^\dagger(t,\bx) = \E_{t,\bx}\left[ \exp\left( - \eone \cdot \left(\bm f(\bX_T)-\bm c\right) - \etwo\cdot \left(\int_t^T \bg(\bX_u)\diff u - \bd\right)\right)  \right]
\end{equation}
and suppose that
$J^\dagger \in\mathcal{C}^{1,2}([0,T)\times \R^n; \R)\cap \mathcal{C}^{0}([0,T]\times \R^n; \R)$ with $J^\dagger$ having at most quadratic growth, i.e., there is $C_3\in\R^+$ such that $\abs{J(t,\bx)}\leq C_3(1+\abs{\bx}^2)$. Let
\begin{subequations}
\label{eqn:lambda-h-dagger}
\begin{align}
    \blambda^\dagger_t &:= 
    -\frac{\nabla_{\bx} \omega^\dagger(t,\bX_t) \,}{\omega^\dagger(t,\bX_t)}\,\bm\sigma(t,\bX_t)
    \qquad \text{and} \\
    \bh^\dagger_t(\bm z) &:= 
    - \frac{\bm\Delta_{\bm z}\omega^\dagger(t,\bX_{t^-})}{\omega^\dagger(t,\bX_{t^-})}\,,
\end{align}
\end{subequations}
and assume that
\begin{align}\label{eq: sufficient cond admissible}
    \E\bigg[\exp\bigg( &\tfrac12 \,\int_0^T |\blambda^\dagger_s|^2\,\diff s + \int_0^T\int_{\R^l} \bh^\dagger_t(\bm z)\odot \bh^\dagger_t(\bm z)\,\bmu(\diff t,\diff \bm z)\,\bigg)\bigg]<\infty\,,
\end{align}
\begin{equation}\label{eq: sufficient cond admissible I}
\E^{\Qlh}\left[\abs{f_j(\bX_T)}\right]<\infty\,, \text{ }\,\forall\;j \in\mR_1\,,\quad \E^{\Qlh}\left[\int_0^T \abs{g_i(\bX_s)}\,\diff s\right]<\infty\,, \text{ }\,\forall\;i \in\mR_2\,,    
\end{equation}
and
\begin{equation}\label{eq: sufficient cond admissible II}
\E^{\Q^{\blambda^\dagger,\bh^\dagger}}\left[\sup_{t\in[0,T]} \abs{\bX_t}^2\right]<\infty \,.    
\end{equation}
Then, $\blambda^\dagger$ and $\bh^\dagger$ are admissible controls and $J^\dagger = J$. 
\end{theorem}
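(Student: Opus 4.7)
The plan is a verification argument tailored to L\'evy-It\^o processes: show that $J^\dagger$ is a classical solution of the HJB equation \eqref{eqn:multi HJB}; apply It\^o's formula to $J^\dagger(t,\bX_t)$ under an arbitrary $\Qlh\in\mcQ$; use the HJB inequality together with a localisation to conclude $J^\dagger\le J^{\blambda,\bh}$; and verify that $(\blambda^\dagger,\bh^\dagger)$ is admissible and attains equality.

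Because $(\eone,\etwo)\in D_{-\mfX}$, the expectation \eqref{eq:omega-star} is finite and, being an expectation of a strictly positive random variable, $\omega^\dagger>0$. A Feynman--Kac argument for jump-diffusions (justified by Assumption \ref{assumption: lip and linear growth}) shows $\omega^\dagger$ solves the linear integro-PDE
\begin{equation*}
\partial_t\omega^\dagger+\bm\alpha\cdot\nabla_{\bx}\omega^\dagger+\tfrac12\Tr(\bm\sigma\bm\sigma^\intercal\nabla^2_{\bx}\omega^\dagger)-\int_{\R^l}(\nabla_{\bx}\omega^\dagger)^\intercal\bm\gamma\,\bnu(\diff\bm z)+\int_{\R^l}\bm\Delta_{\bm z}\omega^\dagger\,\bnu(\diff\bm z)-\etwo\cdot\bg\,\omega^\dagger=0,
\end{equation*}
with terminal condition $\omega^\dagger(T,\bx)=\exp(-\eone\cdot(\bm f(\bx)-\bm c)+\etwo\cdot\bd)$. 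Reversing the substitution $J^\dagger=-\log\omega^\dagger$ used in the derivation from \eqref{eq: multi PDE vnot2} to \eqref{eq: PDE transformation 1} shows that $J^\dagger$ satisfies \eqref{eq: multi PDE vnot2}; since the feedback pair \eqref{eqn:lambda-h-dagger} is exactly the pointwise minimiser in the Hamiltonian of \eqref{eqn:multi HJB}, we conclude that $J^\dagger$ is a $\mathcal C^{1,2}$ solution of the HJB equation with the infimum attained at $(\blambda^\dagger,\bh^\dagger)$.

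Next, fix any $\Qlh\in\mcQ$ and apply It\^o's formula to $J^\dagger(t,\bX_t)$ under $\Qlh$, using the $\Qlh$-dynamics of $\bX$ expressed through $\bWlambda$ and $\btmu^{\bh}$. The drift of $J^\dagger(t,\bX_t)$ equals $\partial_t J^\dagger+\Lg^{\blambda,\bh}J^\dagger$, and the HJB inequality reads
\begin{equation*}
\partial_t J^\dagger+\Lg^{\blambda,\bh}J^\dagger+\tfrac12|\blambda|^2+\int_{\R^l}\{\blog(\bm 1-\bh)\odot(\bm 1-\bh)+\bh\}\bnu(\diff\bm z)+\etwo\cdot\bg\ge 0,
\end{equation*}
with equality at $(\blambda^\dagger,\bh^\dagger)$. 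Localising by $\tau_n:=\inf\{s\ge t:|\bX_s|\ge n\}\wedge T$, taking $\Qlh_{t,\bx}$-expectation over $[t,\tau_n]$ (which annihilates the $\bWlambda$- and $\btmu^{\bh}$-integrals), and rearranging yields
\begin{equation*}
J^\dagger(t,\bx)\le\E^{\Qlh}_{t,\bx}\!\bigg[J^\dagger(\tau_n,\bX_{\tau_n})+\int_t^{\tau_n}\!\Big\{\tfrac12|\blambda_s|^2+\!\int_{\R^l}\!\{\blog(\bm 1-\bh_s)\odot(\bm 1-\bh_s)+\bh_s\}\bnu(\diff\bm z)+\etwo\cdot\bg(\bX_s)\Big\}\diff s\bigg].
\end{equation*}
The quadratic-growth hypothesis on $J^\dagger$ together with $\E^{\Qlh}[\sup_{t\in[0,T]}|\bX_t|^2]<\infty$ supplies a $\Qlh$-integrable majorant for the boundary term, so dominated convergence pushes $J^\dagger(\tau_n,\bX_{\tau_n})\to J^\dagger(T,\bX_T)$; monotone convergence handles the non-negative running-cost pieces (non-negativity of $\blog(\bm 1-\bh)\odot(\bm 1-\bh)+\bh$ follows from $\bh\le\bm 1$), and \eqref{eq: sufficient cond admissible I} combined with Fubini controls the $\etwo\cdot\bg$ term. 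Using the terminal condition $J^\dagger(T,\bx)=\eone\cdot(\bm f(\bx)-\bm c)-\etwo\cdot\bd$, the right-hand side converges to $J^{\blambda,\bh}(t,\bx)$, giving $J^\dagger\le J^{\blambda,\bh}$ for every $\Qlh\in\mcQ$.

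Finally, we verify that $\Q^{\blambda^\dagger,\bh^\dagger}\in\mcQ$: positivity of $\omega^\dagger$ gives the componentwise bound $h^{\dagger,j}_t(z_j)=1-\omega^\dagger(t,\bX_{t^-}+\bm\gamma^{(j)})/\omega^\dagger(t,\bX_{t^-})<1$; Novikov's condition \eqref{eq: sufficient cond admissible} makes $Z^{\blambda^\dagger,\bh^\dagger}$ a true $\P$-martingale with unit expectation; conditions \eqref{eq: sufficient cond admissible I} and \eqref{eq: sufficient cond admissible II} supply the remaining integrability requirements of \eqref{eq:Q-set}, and finiteness of the relative entropy for $\Q^{\blambda^\dagger,\bh^\dagger}$ follows from $J^\dagger(0,\bx_0)<\infty$. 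Since the HJB becomes an equality along $(\blambda^\dagger,\bh^\dagger)$, the argument of the previous paragraph returns $J^\dagger(t,\bx)=J^{\blambda^\dagger,\bh^\dagger}(t,\bx)$, which combined with the preceding inequality yields $J^\dagger=J$. The principal technical obstacle is the localisation: the quadratic-growth hypothesis on $J^\dagger$ is calibrated precisely to the $\sup_t|\bX_t|^2$ integrability built into $\mcQ$, and one must also check that the stochastic integrals against $\bWlambda$ and $\btmu^{\bh}$ are genuine $\Qlh$-martingales after stopping, which follows from Assumption \ref{assumption: lip and linear growth} together with the $\mcP_2$-conditions defining $\mcQ$.
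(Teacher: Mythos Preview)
Your proposal is correct and follows essentially the same verification argument as the paper: establish that $J^\dagger$ solves the HJB equation \eqref{eqn:multi HJB} with the infimum attained at $(\blambda^\dagger,\bh^\dagger)$, apply It\^o/Dynkin under an arbitrary $\Qlh\in\mcQ$ with a localisation $\tau_n$, use the quadratic-growth bound together with $\E^{\Qlh}[\sup_t|\bX_t|^2]<\infty$ for dominated convergence on the boundary term, and then repeat with $(\blambda^\dagger,\bh^\dagger)$ to obtain the reverse inequality. The only cosmetic difference is that the paper takes a two-step limit (first $n\to\infty$ at a fixed $s<T$, then $s\nearrow T$ using continuity of $J^\dagger$) and bounds the KL running-cost term via dominated convergence (since $D_{KL}(\Qlh\,\|\,\P)<\infty$ for $\Qlh\in\mcQ$) rather than monotone convergence; both routes are valid here.
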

\begin{proof}
Note that \eqref{eq: sufficient cond admissible} and \eqref{eq: sufficient cond admissible II} are sufficient to guarantee that $\blambda^\dagger$ and $\bh^\dagger$ are admissible and that they induce a measure $\Q_{\blambda^\dagger,\bh^\dagger}$ that is well-defined -- this is a consequence of Novikov's condition and the definition of $\mcQ$. Next, we observe that 
\begin{align*}
   & \partial_t J^\dagger(t,\bx)+ \inf_{\blambda, \bh} \Bigg\{ \Lg^{\blambda,\bh} J^\dagger(t,\bx) + \tfrac12 |\blambda|^2 
    \\
    &    \qquad \qquad  \quad + \int_{\R^l} \big[\blog(\bm 1-\bh_t(\bm z))\odot(\bm 1-\bh_t(\bm z))  + \bh_t(\bm z) \big]\,\bnu(\diff \bm z) +\etwo\cdot \bg(\bx)\Bigg\} 
\\
 & = \partial_t J^\dagger(t,\bx)+  \Lg^{\blambda^\dagger,\bh^\dagger} J^\dagger(t,\bx) + \tfrac12 |\blambda^\dagger|^2 
    \\
  &  \qquad \qquad  \quad + \int_{\R^l} \big[\blog(\bm 1-\bh^\dagger_t(\bm z))\odot(\bm 1-\bh^\dagger_t(\bm z))  + \bh^\dagger_t(\bm z) \big]\,\bnu(\diff \bm z)  +\etwo \cdot \bg(\bx) \\
  &= 0\,.
\end{align*}
Then, for arbitrary $(\blambda,\bh)$ s.t. $\Qlh \in\mcQ$, $(t,\bx)\in[0,T)\times\R^n$, $s\in[t,T)$ and a stopping time $\tau_n = \inf\{u\geq t\,:\, |\bX_u|>n\}$, for $n\in\Z_+, n<\infty$, we use Dynkin's formula to obtain
\begin{align*}
   \E_{t,\bx}^{\Qlh} \left[J^\dagger(s\wedge \tau_n, \bX_{s\wedge \tau_n})\right] 
   &= 
   J^\dagger(t,\bx) + \E_{t,\bx}^{\Qlh} \left[\int_t^{s\wedge\tau_n} \partial_t J^\dagger(u,\bX_u) + \mathcal{L}^{\blambda,\bh} J^\dagger(u,\bX_u) \,\diff u\right]\,,
\end{align*}
and as
\begin{multline*}
   \partial_t J^\dagger(t,\bx)+ \Lg^{\blambda,\bh} J^\dagger(t,\bx) + \tfrac12 |\blambda|^2 
    \\
 + \int_{\R^l} \big[\blog(\bm 1-\bh_t(\bm z))\odot(\bm 1-\bh_t(\bm z))  + \bh_t(\bm z) \big]\,\bnu(\diff \bm z) +\etwo \cdot \bg(\bx)\geq 0,
\end{multline*}
we conclude that
\begin{align*}
   \E_{t,\bx}^{\Qlh} 
   &
   \left[  J^\dagger(s\wedge \tau_n, \bX_{s\wedge \tau_n})\right] 
   \\
   &\geq
   J^\dagger(t,\bx) - \E_{t,\bx}^{\Qlh} \left[\int_t^{s\wedge\tau_n} \tfrac12 |\blambda_u|^2 +\etwo \cdot \bg(\bX_u)\diff u \right] 
   \\
   &- \E_{t,\bx}^{\Qlh} \left[\int_t^{s\wedge \tau_n}\int_{\R^l} \big[\blog(\bm 1-\bh_u(\bm z))\odot(\bm 1-\bh_u(\bm z))  + \bh_u(\bm z) \big]\,\bnu(\diff \bm z)  \,\diff u\right]\,.
\end{align*}
Since $\Qlh\in\mcQ$ we have that 
\begin{align*}
    &
    \abs{\E_{t,\bx}^{\Qlh} \left[\int_t^{s\wedge \tau_n}\tfrac12 |\blambda_u|^2 
    +
    \int_{\R^l} \big[\blog(\bm 1-\bh_u(\bm z))\odot(\bm 1-\bh_u(\bm z))  
    + \bh_u(\bm z) \big]\,\bnu(\diff \bm z)  \,\diff u\right]} 
    \\
    &\quad \leq \E_{t,\bx}^{\Qlh} \left[\int_0^{T}\tfrac12 |\blambda_u|^2 +\int_{\R^l}  \big[\blog(\bm 1-\bh_u(\bm z))\odot(\bm 1-\bh_u(\bm z))  + \bh_u(\bm z) \big] \,\bnu(\diff \bm z)  \,\diff u\right] < \infty\,,
\end{align*}
where we used that $\log(1-y) (1-y) + y \ge 0$ for $y \le 1$.
Similarly, by \eqref{eq: sufficient cond admissible I} 
\begin{align*}
    &\abs{\E_{t,\bx}^{\Qlh} \left[\int_t^{s\wedge\tau_n}  \etwo\cdot \bg(\bX_u)\diff u \right]} 
    \leq 
    \abs{\etwo} \cdot\, \E_{t,\bx}^{\Qlh} \left[\int_0^{T} \,\abs{\bg(\bX_u)}\diff u \right] <\infty\,.
\end{align*}
Finally, using the quadratic growth condition imposed on $J^\dagger$, we have 
\begin{equation*}
    \abs{J^\dagger(s\wedge \tau_n, \bX_{s\wedge \tau_n})} \leq C_3\left(1+\sup_{u\in[t,T]}\abs{\bX_u}^2\right)\,,
\end{equation*}
and the right hand side of the inequality is integrable with respect to $\Qlh$ because $\Qlh\in\mcQ$.
Thus, as a consequence of the dominated convergence theorem, we can take the limit when $n\to \infty$ to obtain for all $s \in[t, T)$
\begin{align*}
   \E_{t,\bx}^{\Qlh} \left[J^\dagger(s, \bX_{s})\right] &\geq J^\dagger(t,\bx) - \E_{t,\bx}^{\Qlh} \left[\int_t^{s} \tfrac12 |\blambda_u|^2 +\etwo \cdot \bg(\bX_u)\diff u \right] 
   \\
   &- \E_{t,\bx}^{\Qlh} \left[\int_t^{s}\int_{\R^l} \big[\blog(\bm 1-\bh_u(\bm z))\odot(\bm 1-\bh_u(\bm z))  + \bh_u(\bm z) \big]\,\bnu(\diff \bm z)  \,\diff u\right]\,,
\end{align*}
and by continuity of $J^\dagger$, as we send $s\nearrow T$, we obtain 
\begin{align*}
   &\E_{t,\bx}^{\Qlh} \Big[\eone \cdot\left(\bm f(\bX_T)-\bm c\right) - \etwo\cdot \bd\Big] 
   \geq 
   \\
   & \qquad J^\dagger(t,\bx) - \E_{t,\bx}^{\Qlh} \left[\int_t^{T} \tfrac12 |\blambda_u|^2 +\etwo\cdot\bg(\bX_u)\diff u \right] 
   \\
   &\qquad - \E_{t,\bx}^{\Qlh} \left[\int_t^{T}\int_{\R^l} \big[\blog(\bm 1-\bh_u(\bm z))\odot(\bm 1-\bh_u(\bm z))  + \bh_u(\bm z) \big]\,\bnu(\diff \bm z)  \,\diff u\right]\,.
\end{align*}
After rearranging the above equation we have that $J^\dagger \le J^{\blambda,\bh}$, and as a consequence of the arbitrariness of $\blambda$ and $\bh$, we obtain $J^\dagger \leq J$.

Finally, using a similar localisation technique as above, this time with $(\blambda^\dagger,\bh^\dagger) $ and corresponding measure ${\Q^{\blambda^\dagger,\bh^\dagger}}\in\mcQ$, and since it holds by construction that
\begin{align*}
  &  \partial_t J^\dagger(t,\bx)+  \Lg^{\blambda^\dagger,\bh^\dagger} J^\dagger(t,\bx) + \tfrac12 |\blambda^\dagger|^2 
    \\
  &  \qquad \qquad  \quad + \int_{\R^l} \big[\blog(\bm 1-\bh^\dagger_t(\bm z))\odot(\bm 1-\bh^\dagger_t(\bm z))  + \bh^\dagger_t(\bm z) \big]\,\bnu(\diff \bm z)  +\etwo \cdot \bg(\bx) = 0\,,
\end{align*}
we have that 
\begin{align*}
    &\E_{t,\bx}^{\Q^{\blambda^\dagger,\bh^\dagger}} \big[\eone \cdot\left(\bm f(\bX_T)-\bm c\right) - \etwo\cdot \bd\big]
    \\
    & \qquad 
    = J^\dagger(t,\bx) - \E_{t,\bx}^{\Q^{\blambda^\dagger,\bh^\dagger}} \left[\int_t^{T} \tfrac12 |\blambda^\dagger_u|^2 +\etwo \cdot \bg(\bX_u)\diff u \right] 
    \\
    &\qquad\quad - \E_{t,\bx}^{\Q^{\blambda^\dagger,\bh^\dagger}} \left[\int_t^{T}\int_{\R^l} \big[\blog(\bm 1-\bh^\dagger_u(\bm z))\odot(\bm 1-\bh^\dagger_u(\bm z))  + \bh^\dagger_u(\bm z) \big]\,\bnu(\diff \bm z)  \,\diff u\right]\,.
\end{align*}
Therefore, after rearranging, we have that $J\leq J^\dagger$. Combining both inequalities we obtain that $J=J^\dagger$ which concludes the proof.
\end{proof}

We introduce the notation $\Q^\dagger:=\Q_{\blambda^\dagger,\bh^\dagger}$ to refer to the measure change induced by choosing $\blambda^\dagger,\bh^\dagger$ as in \eqref{eqn:lambda-h-dagger}.

\subsection{Representation of RN-density}\label{sec:RN-representation}

The next result shows that for $\Qlh\in \mcQ$ with carefully chosen $\blambda$ and $\bh$, the corresponding RN-density $Z^{\blambda,\bh}_T$ of $\Qlh$ has an alternative representation. This leads to a simple representation of the RN-density that characterises a solution to $\eqref{opt-Z-lambda-h}$, see Corollary \ref{cor:dQ-dP}.

\begin{theorem}[Representation of RN-density]\label{thm: form of optimal RN derivative}
Let Assumption \ref{assumption: lip and linear growth} be fulfilled. Let  $Z^{\blambda,\bh}$ be given in \eqref{eq: Z_t for ndim case vect notation} with $\blambda,\bh$ specifically chosen to be
\begin{align*}
       \blambda_t = -\frac{\nabla_{\bx} w(t,\bX_t) \,}{w(t,\bX_t)}\,\bm\sigma(t,\bX_t)\,\qquad \text{and}\qquad
    \bh_t(\bm z) = - \frac{\bm\Delta_{\bm z}w(t,\bX_{t^-})}{w(t,\bX_{t^-})}\,,
\end{align*}
where $w:[0,T]\times \R^n\to \R$ is
\begin{equation*}
    w(t,\bx) := \E_{t,\bx}\left[H\left(\bX_T\right) \,G\left(\int_t^T \ell(\bX_s)\,\diff s\right)\right]\,,
\end{equation*}
with $\ell \colon \R^n \to \R$ and for some $H:\R^n\to\R^+\setminus \{0\}$ such that $\E[H(X_T)]<\infty$, and $G:\R\to\R^+\setminus \{0\}$ is $\mathcal{C}^{1}$ and such that $G(a)\,G( b) = G(a+b)$ for $a,b\in\R$.\footnote{From this property, it follows that $G(0)=1$, and $G(-a) = 1/G(a)$. In fact, a simple calculation shows that  $G(x)$ is of the form $\exp(\kappa\,x)$ for some constant $\kappa$.} 
We assume that $H$ and $G$ are such that $w\in\mathcal{C}^{1,2}([0,T]\times \R^n)$ and \begin{align}\label{eq: novikov cond thm1.2}
  \E\bigg[\exp\bigg( &\tfrac12 \,\int_0^T |\blambda_s|^2\,\diff s + \int_0^T\int_{\R^l} \bh_t(\bm z)\odot \bh_t(\bm z)\,\bmu(\diff t,\diff \bm z)\,\bigg)\bigg]<\infty\,.
\end{align}
Then, we have, for all $\tT$, that
\begin{equation*}
    Z^{\blambda,\bh}_t = \frac{w(t,\bX_{t})\,G\left(\int_0^t \ell(\bX_s)\,\diff s\right)}{\E\left[w(T,\bX_T)\,G\left(\int_0^T \ell(\bX_s)\,\diff s\right)\right]}\,,
\end{equation*}
and, noting that $w(T,\bX_T) = H(\bX_T)\,G(0)$ and $G(0)=1$,
\begin{equation*}
    Z^{\blambda,\bh}_T = \frac{H(\bX_T)\,G\left(\int_0^T \ell(\bX_s)\,\diff s\right)}{\E\left[H(\bX_T)\,G\left(\int_0^T \ell(\bX_s)\,\diff s\right)\right]}\,.
\end{equation*}
\end{theorem}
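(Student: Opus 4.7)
The plan is to show that $M_t := w(t,\bX_t)\,G\!\left(\int_0^t \ell(\bX_s)\,\diff s\right)$ is a strictly positive $\P$-martingale whose renormalisation $M_t/M_0$ coincides precisely with the Dol\'eans--Dade exponential $Z^{\blambda,\bh}_t$. From the footnote, the multiplicative property $G(a)G(b)=G(a+b)$ together with $G\in\mathcal{C}^1$ and $G>0$ forces $G(x)=e^{\kappa x}$ for some constant $\kappa\in\R$. Combining this with the tower property applied to $w(t,\bx)=\E_{t,\bx}[H(\bX_T)\,G(\int_t^T \ell(\bX_s)\diff s)]$ and the $\mathcal{C}^{1,2}$ regularity assumed on $w$, the standard Feynman--Kac argument yields the PDE
$$\partial_t w + \Lg w + \kappa\,\ell\, w \;=\; 0\,,\qquad w(T,\bx) = H(\bx)\,,$$
where $\Lg$ is the $\P$-generator of $\bX$,
$$\Lg w \;=\; \bm\alpha\cdot\nabla_{\bx}w + \tfrac12\Tr(\bm\sigma\bm\sigma^{\intercal}\,\nabla^2_{\bx} w) + \int_{\R^l}\bigl(\bm\Delta_{\bm z}w - (\nabla_{\bx}w)^{\intercal}\bm\gamma\bigr)\,\bnu(\diff \bm z)\,.$$

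Next I would apply It\^o's formula for L\'evy--It\^o processes to $M_t$. The PDE forces every $\diff t$-term to cancel, leaving only the stochastic integrals
$$\diff M_t \;=\; G\!\left(\int_0^t \ell(\bX_s)\diff s\right)\Bigl[\nabla_{\bx}w(t,\bX_t)\,\bm\sigma(t,\bX_t)\,\diff \bW_t + \int_{\R^l}\bm\Delta_{\bm z}w(t,\bX_{t^-})\,\btmu(\diff t,\diff \bm z)\Bigr].$$
Since $H,G>0$ we have $w>0$, so $M_{t^-}>0$ and $\bm\Delta_{\bm z}w/w \ge -1$, which explains the bound $\bh_t(\bm z)\le 1$. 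Dividing by $M_{t^-}=w(t^-,\bX_{t^-})\,G(\int_0^{t^-}\ell\,\diff s)$ and substituting the prescribed forms of $\blambda$ and $\bh$ yields
$$\frac{\diff M_t}{M_{t^-}} \;=\; -\blambda_t\,\diff \bW_t \;-\; \int_{\R^l}\bh_t(\bm z)\,\btmu(\diff t,\diff \bm z)\,,$$
so $M/M_0$ is the stochastic exponential of a local martingale driven by exactly the $\blambda$ and $\bh$ stated in the theorem.

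It remains to identify this stochastic exponential with the explicit representation in \eqref{eq: Z_t for ndim case vect notation}. This is the standard Dol\'eans--Dade formula: the continuous contribution is $-\int_0^t\blambda_s\diff \bW_s-\tfrac12\int_0^t|\blambda_s|^2\diff s$, while the purely-discontinuous part, after taking the log of $\prod_{s\le t}(1-\bh_s)e^{\bh_s}$ and splitting $\bmu=\btmu+\bnu$, produces $\int_0^t\!\!\int_{\R^l}\blog(\bm 1-\bh_s)\,\btmu(\diff s,\diff\bm z)+\int_0^t\!\!\int_{\R^l}[\blog(\bm 1-\bh_s)+\bh_s]\,\bnu(\diff\bm z)\diff s$. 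Novikov's condition \eqref{eq: novikov cond thm1.2} upgrades the stochastic exponential from a local martingale to a true martingale with unit expectation, so by uniqueness of solutions to the linear SDE $\diff Y_t = -Y_{t^-}(\blambda_t\diff \bW_t+\int \bh_t\,\diff\btmu)$ we obtain $Z^{\blambda,\bh}_t=M_t/M_0$ pathwise. Using $M_0=w(0,\bX_0)=\E[H(\bX_T)\,G(\int_0^T \ell\,\diff s)]$ yields the first identity, and setting $t=T$ together with $w(T,\bX_T)=H(\bX_T)G(0)=H(\bX_T)$ yields the second. The main obstacle is the bookkeeping of the jump terms: keeping careful track of the difference between $\bmu$, $\btmu$, and $\bnu$ while comparing the stochastic exponential against \eqref{eq: Z_t for ndim case vect notation}, and verifying that the Novikov-type assumption is exactly what is needed to promote local-martingale identifications to genuine ones.
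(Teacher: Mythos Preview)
Your proof is correct and in fact more streamlined than the paper's. Both arguments hinge on the same martingale identity for $M_t:=w(t,\bX_t)\,G\bigl(\int_0^t\ell(\bX_s)\diff s\bigr)$ (the paper calls this $\tilde w_t$), but they exploit it differently. You compute $\diff M_t$, use the Feynman--Kac PDE $\partial_t w+\Lg w+\kappa\,\ell\,w=0$ to kill the drift, read off $\diff M_t/M_{t^-}=-\blambda_t\,\diff\bW_t-\int\bh_t\,\btmu$, and conclude $M_t/M_0=Z^{\blambda,\bh}_t$ by uniqueness of the Dol\'eans--Dade exponential. The paper instead sets $V_t:=1/M_t$, forms $\phi_t:=Z_t\,V_t$, and shows $\diff\phi_t=0$ by a long It\^o calculation in which eight labelled terms $(a)$--$(h)$ are cancelled pairwise; this requires computing $\diff V_t$ (introducing the quadratic and reciprocal jump corrections from It\^o applied to $x\mapsto 1/x$) together with the product rule and the covariation $[Z,V]$. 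Your route avoids all of that bookkeeping by never inverting $M$. One minor remark: the Novikov assumption is not actually needed for the pathwise identification $Z_t=M_t/M_0$ (uniqueness of stochastic exponentials holds for semimartingales in general); its role is only to guarantee $\E[Z_T]=1$, which the paper uses to pin down the constant $c$, whereas you identify the normaliser directly as $M_0=w(0,\bx_0)=\E[H(\bX_T)G(\int_0^T\ell\,\diff s)]$.
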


\begin{proof}
For simplicity we drop the superscripts of $Z^{\blambda,\bh}$ and just write $Z$. 
By \eqref{eq: novikov cond thm1.2} we have that $(Z_t)_{t \in [0,T]}$ is a martingale and thus $Z_t = \E_t[Z_T]$ for $0\leq t \leq T$, where $\E_t[\cdot] := \E[\cdot | \F_t]$. As $Z_t$ is a stochastic exponential, from It\^o's lemma, we have that
\begin{equation}
\label{eqn:dZ-stoch-exp}
    \diff Z_t = \frac{Z_{t^-}}{w}\,\nabla_{\bx}w\,\bm\sigma\,\diff \bW_t  + \frac{Z_{t^-}}{w}\int_{\R^l} \bm\Delta_{\bm z}w(t,\bX_{t^-})\,\tilde{\bmu}(\diff t,\diff\bm z)\,.
\end{equation}
Next, we observe that the process \begin{equation*}
\tilde{w}_t := G\left(\int_0^t \ell(\bX_s)\diff s\right)\, w(t,\bX_{t}) =  \E_{t,\bX_{t}}\left[H\left(\bX_T\right) \,G\left(\int_0^T \ell(\bX_s)\diff s\right)\right]\,,
\end{equation*}
is a martingale. Given that $w\in\mathcal{C}^{1,2}([0,T)\times\R^n)$, it follows that $\tilde{w}$ satisfies the following SDE
\begin{equation*}
\begin{split}
    \diff \tilde{w}_t 
    =&
    \; G\left(\textstyle\int_0^t \ell(\bX_s)\,\diff s\right)\Bigg\{\,\partial_t w\,\diff t + (\nabla_{\bx}w)\cdot\bm\alpha\,\diff t + (\nabla_{\bx}w)\,\bm\sigma\,\diff \bW_t 
    \\
    & \qquad \qquad\qquad\qquad + \tfrac{1}{2}\,\Tr\left(\bm\sigma\,\bm\sigma^{\intercal}\,\nabla^2_{\bx}w\right)\,\diff t + \int_{\R^l} \bm\Delta_{\bm z}w\,{\bmu}(\diff t,\diff\bm z)\Bigg\}
    \\
    &   + w\,G'\left(\textstyle\int_0^t \ell(\bX_s)\,\diff s\right)\,\ell(\bX_t)\,\diff t
    \,,
\end{split}
\end{equation*}
and as $\tilde{w}$ is a martingale, we have the following identity
\begin{multline}
\label{eq: identity thm v2}
    G\left(\textstyle\int_0^t \ell(\bX_s)\,\diff s\right)\left\{\partial_t w + (\nabla_{\bx}w)\cdot\bm\alpha + \tfrac12 \Tr\left(\bm\sigma\,\bm\sigma^{\intercal}\,\nabla^2_{\bx}w\right) + \int_{\R^l} \bm\Delta_{\bm z}w(t,\bX_{t^-})\,{\bnu}(\diff t,\diff\bm z) \right\} 
    \\
    + w\,G'\left(\textstyle\int_0^t \ell(\bX_s)\,\diff s\right)\,\ell(\bX_t) = 0\,.
\end{multline}

Next, we introduce the process $V_t := 1/\tilde{w}_t$, which can be written as $V_t = v(t,\bX_{t})\,/G_t$ where $v(t,\bx) := 1/w(t,\bx)$, and
\begin{equation*}
    G_t := G\left(\textstyle\int_0^t \ell(\bX_s)\diff s\right)\,.
\end{equation*}
Thus, by the multidimensional L\'evy-It\^o formula, we have that (we omit the arguments of the functions when there is no confusion)
\begin{equation*}
    \begin{split}   
        \diff V_t &= -\frac{1}{G_t\,w^2}\Big(\partial_t w + (\nabla_{\bx}w)\cdot\bm\alpha + \tfrac12 \Tr\left(\bm\sigma\,\bm\sigma^{\intercal}\,\nabla^2_{\bx}w\right)\Big)\diff t  - \frac{1}{G_t\,w^2}(\nabla_{\bx}w)\,\bm\sigma\,\diff \bW_t 
        \\
        &\quad + \frac{1}{G_t\,w^3}\,\left| (\nabla_{\bx}w)\,\bm\sigma \right|^2 \diff t  + \frac{1}{G_t}\sum_{j\in \D} \int_{\R}\left( \frac{1}{w(t,\bX_{t^-}+\bm\gamma^{(j)})} - \frac{1}{w(t,\bX_{t^-})}\right)\,\mu^j(\diff t,\diff z)\\
        &\quad - G'\left(-\textstyle\int_0^t \ell(\bX_s)\,\diff s\right) \,\ell(\bX_t)\,\frac{1}{w}\,\diff t\,,
    \end{split}
\end{equation*}
and using the identity in \eqref{eq: identity thm v2} we have 
\begin{equation}
\label{eqn:dV-after-mtg-eqn}
    \begin{split}
        \diff V_t 
        &=
        \frac{1}{G_t\,w^2}\, \int_{\R^l} \bm\Delta_{\bm z}w(t,\bX_t)\,{\bnu}(\diff t,\diff\bm z)  
        +
        \frac{1}{w\,G^2_t}\,G'\left(\textstyle\int_0^t \ell(\bX_s)\,\diff s\right)\,\ell(\bX_t)\,\diff t 
        \\
        & \quad -
        \frac{1}{G_t\,w^2}(\nabla_{\bx}w)\,\bm\sigma\,\diff \bW_t 
        +
        \frac{1}{G_t\,w^3}\,\left| (\nabla_{\bx}w)\,\bm\sigma \right|^2  \diff t  
        \\
        & \quad +
        \frac{1}{G_t}\sum_{j\in\D} \int_{\R}\left( \frac{1}{w(t,\bX_{t^-}+\bm\gamma^{(j)})} - \frac{1}{w(t,\bX_{t^-})}\right)\,\mu^j(\diff t,\diff z)
        \\
        & \quad - 
        G'\left(-\textstyle\int_0^t \ell(\bX_s)\,\diff s\right) \,\ell(\bX_t)\,\frac{1}{w}\,\diff t\,.
    \end{split}
\end{equation}

Next, define the process $\phi_t:= Z_t\,V_t$ for $t\in[0,T]$. We claim that $\phi_t=c$ for all $t\in[0,T]$ for some constant $c\in\R$. To see this, note that 
\begin{equation*}
    \diff \phi_t = Z_{t^-}\,\diff V_t + V_{t^-}\,\diff Z_t + \diff [Z,V]_t\,,
\end{equation*}
which after direct substitution, using \eqref{eqn:dV-after-mtg-eqn}, \eqref{eqn:dZ-stoch-exp}, and the formula for $\diff [Z,V]_t$, we have
\begin{equation*}
    \begin{split}
        \diff \phi_t 
        &=
        \frac{Z_{t^-}}{G_t\,w^2}\, \int_{\R^l} \bm\Delta_{\bm z}w(t,\bX_{t^-})\,{\bnu}(\diff t,\diff\bm z)  
        + \frac{Z_{t^-}}{w\,G^2_t}\,G'\left(\textstyle\int_0^t \ell(\bX_s)\,\diff s\right)\,\ell(\bX_t)\,\diff t 
        \\
        & \quad
        \underbrace{-\; \frac{Z_{t^-}}{G_t\,w^2}(\nabla_{\bx}w)\,\bm\sigma\,\diff \bW_t  }_{(a)}\; + \;
        \underbrace{ \frac{Z_{t^-}}{G_t\,w^3}\,\left| (\nabla_{\bx}w)\,\bm\sigma \right|^2\diff t}_{(b)} 
        \\
        &\quad 
         + \frac{Z_{t^-}}{G_t}\sum_{j\in \D} \int_{\R}\left( \frac{1}{w(t,\bX_{t^-}+\bm\gamma^{(j)})} - \frac{1}{w(t,\bX_{t^-})}\right)\,\mu^j(\diff t,\diff z)
        \\
        &\quad 
        - Z_{t^-}\,G'\left(-\textstyle\int_0^t \ell(\bX_s)\,\diff s\right) \,\ell(\bX_t)\,\frac{1}{w}\,\diff t\; +\;
        \underbrace{ V_{t^-}\,Z_{t^-}\,(\nabla_{\bx}\log w)\,\bm\sigma\,\diff \bW_t }_{(c)}
        \\
        &\quad 
        + \frac{V_{t^-}\,Z_{t^-}}{w}\,\int_{\R^l} \bm\Delta_{\bm z}w(t,\bX_{t^-})\,{\tilde\bmu}(\diff t,\diff\bm z)\;\; \underbrace{ -\; \frac{Z_{t^-}}{G_t\,w^2}\Tr\Big(\bm\sigma\,\bm\sigma^{\intercal}\,(\nabla_{\bx}\log w)^{\intercal}\,\nabla_{\bx}w\Big)\diff t }_{(d)} 
        \\
        &\quad 
        - \frac{Z_{t^-} }{G_t}\sum_{j \in \D}  \int_{\R}\left(\frac{1}{w(t,\bX_{t^-}+\bm\gamma^{(j)})} 
        -
        \frac{1}{w(t,\bX_{t^-})} \right)\left(1- \frac{w(t,\bX_{t^-}+\bm\gamma^{(j)})}{w(t,\bX_{t^-})}\right)\mu^j(\diff t,\diff z)\,. 
    \end{split}
\end{equation*}
As $\nabla_{\bx} \log w = \frac{\nabla_{\bx} w}{w}$ and  $V_t = 1/(w\,G_t)$, after a short calculation we find that $(a)$ cancels with $(c)$ . Similarly, $(b)$ cancels with $(d)$ by factoring $1/w$ out of the $\Tr(\cdot)$ operator. Then, it follows that $\diff \phi_t$ reduces to
\allowdisplaybreaks
\begin{equation}
\label{eqn:dphi-2}
\begin{split}
\allowdisplaybreaks
    \diff \phi_t 
    &= 
    \frac{Z_{t^-}}{G_t\,w^2}\, \int_{\R^l} \bm\Delta_{\bm z}w(t,\bX_{t^-})\,{\bnu}(\diff t,\diff\bm z) +\, \underbrace{ \frac{Z_{t^-}}{w\,G^2_t}\,G'\left(\textstyle\int_0^t \ell(\bX_s)\,\diff s\right)\,\ell(\bX_t)\,\diff t }_{(e)} 
    \\
    &\quad 
    + \frac{Z_{t^-}}{G_t}\sum_{j \in \D} \int_{\R}\left( \frac{1}{w(t,\bX_{t^-}+\bm\gamma^{(j)})} - \frac{1}{w(t,\bX_{t^-})}\right)\,\mu^j(\diff t,\diff z)
    \\
    &\quad 
    \underbrace{-\, Z_{t^-}\,G'\left(-\textstyle\int_0^t \ell(\bX_s)\,\diff s\right) \,\ell(\bX_t)\,\frac{1}{w}\,\diff t}_{(f)}
    \;+\; \frac{V_{t^-}\,Z_{t^-}}{w}\,\int_{\R^l} \bm\Delta_{\bm z}w(t,\bX_{t^-})\,{\tilde\bmu}(\diff t,\diff\bm z)
    \\
    &\quad 
    - \frac{Z_{t^-} }{G_t}\sum_{j \in \D} \int_{\R}\left(\frac{1}{w(t,\bX_{t^-}+\bm\gamma^{(j)})} - \frac{1}{w(t,\bX_{t^-})} \right)\left(1- \frac{w(t,\bX_{t^-}+\bm\gamma^{(j)})}{w(t,\bX_{t^-})}\right)\mu^j(\diff t,\diff z)\,.
\end{split}
\end{equation}
Note that $G(-x) = 1/G(x)$ (as $G(x-x)=G(x)G(-x)$ and $G(0)=1$), hence $G'(-x) = \frac{G'(x)}{G^2(x)}$, 
and thus
\begin{equation*}
    G'\left(-\textstyle\int_0^t \ell(\bX_s)\,\diff s \right) = \frac{G'\left(\int_0^t \ell(\bX_s)\,\diff s\right)}{G^2\left(\int_0^t \ell(\bX_s)\,\diff s\right) }\,.
\end{equation*}
Using this relationship,  the $(e)$ and $(f)$ terms in \eqref{eqn:dphi-2}  cancel, in which case we have
\begin{equation*}
\begin{split}
    \diff \phi_t 
    &=
    \underbrace{\frac{Z_{t^-}}{G_t\,w^2}\,\int_{\R^l} \bm\Delta_{\bm z}w(t,\bX_{t^-})\,{\bnu}(\diff t,\diff\bm z)  }_{(g)}
    \\
    & \quad 
    + \frac{Z_{t^-}}{G_t}\sum_{j \in \D} \int_{\R}\left( \frac{1}{w(t,\bX_{t^-}+\bm\gamma^{(j)})} - \frac{1}{w(t,\bX_{t^-})}\right)\,
    \mu^j(\diff t,\diff z)
    \\
    &\quad 
    + \frac{V_{t^-}\,Z_{t^-}}{w}\,\int_{\R^l} \bm\Delta_{\bm z}w(t,\bX_{t^-})\,{\bmu}(\diff t,\diff\bm z)\;  \underbrace{ - \; \frac{V_{t^-}\,Z_{t^-}}{w}\,\int_{\R^l} \bm\Delta_{\bm z}w(t,\bX_{t^-})\,{\bnu}(\diff t,\diff\bm z)}_{(h)}
    \\
    &\quad 
    - \frac{Z_{t^-} }{G_t}\sum_{j\in \D}  \int_{\R}\left(\frac{1}{w(t,\bX_{t^-}+\bm\gamma^{(j)})} - \frac{1}{w(t,\bX_{t^-})} \right)
    \left(1- \frac{w(t,\bX_{t^-}+\bm\gamma^{(j)})}{w(t,\bX_{t^-})}\right)\mu^j(\diff t,\diff z)\,.
\end{split}
\end{equation*}
From the definition of $V_t$, we see that $(g)$ and $(h)$ cancel. Finally we have
\begin{equation*}
    \begin{split}
        \diff \phi_t 
        &=
        \frac{Z_{t^-}}{G_t}\sum_{j \in \D} \int_{\R}\left( \frac{1}{w(t,\bX_{t^-}+\bm\gamma^{(j)})} - \frac{1}{w(t,\bX_{t^-})}\right)\,\mu^j(\diff t,\diff z)
        \\
        &\quad 
        - \frac{Z_{t^-}}{G_t\,w}\,\sum_{j \in \D }\int_{\R} \left(1 - \frac{w(t,\bX_{t^-}+\bm\gamma^{(j)})}{w(t,\bX_{t^-})}\right)\,{\mu}^j(\diff t,\diff z)
        \\
        &\quad 
        - \frac{Z_{t^-} }{G_t}\sum_{j\in \D}  \int_{\R}\left(\frac{1}{w(t,\bX_{t^-}+\bm\gamma^{(j)})} - \frac{1}{w(t,\bX_{t^-})} \right)\left(1- \frac{w(t,X_{t^-}+\bm\gamma^{(j)})}{w(t,\bX_{t^-})}\right)\mu^j(\diff t,\diff z)
        \\
        &= 
        0\,.
    \end{split}
\end{equation*}
The last equality follows by collecting like terms in the preceding lines. Thus, $\phi_t = c$ for all $t\in[0,T]$, and some $c\in\R$. As $Z_t = c\,w(t,\bX_{t})\,G_t$ and  $\E[Z_T]=1$, it follows that 
\begin{equation*}
    c = \E[w(T,\bX_T)\,G_T ]^{-1}
    =
    \E\left[w(T,\bX_T)\,G\left(\textstyle\int_0^T \ell(\bX_s)\,\diff s\right) \right]^{-1}\,,
\end{equation*}
from which we obtain the required results
\begin{equation*}
    Z_t = \frac{w(t,\bX_{t})\,G_t}{\E[w(T,\bX_T)\,G_T]}\,, \quad t\in [0,T]\,,
    \quad \text{and} \quad
    Z_T = \frac{H(\bX_T)\,G_T}{\E[H(\bX_T)\,G_T]}\,.
\end{equation*}
\end{proof}

\begin{corollary}\label{cor:dQ-dP}
Let Assumptions \ref{assumption: lip and linear growth} be fulfilled, $D_{-\mfX} \neq \emptyset$, and $(\eone, \etwo) \in D_{-\mfX}$. Further let $\blambda^\dagger$, $\bh^\dagger$, and $\omega^\dagger(t,\bx)$ be as in Theorem \ref{thm: verification} and satisfying its assumptions. Then, the probability measure $\Q^\dagger$ has RN-density 
\begin{equation}\label{eq:dq-dp-rep}
    \frac{\diff \Q^\dagger}{\diff \P} 
    =
    Z^{\blambda^\dagger,\bh^\dagger}_T
    =
    \frac{\exp\left( -\eone \cdot \bm f(\bX_T) -\etwo\cdot\int_0^T  \bg(\bX_u)\,\diff u\right)
    }
    {
    \E\left[\exp\left( -\eone \cdot \bm f(\bX_T) -\etwo\cdot\int_0^T  \bg(\bX_u)\,\diff u\right)\right]}\;.
\end{equation}
\end{corollary}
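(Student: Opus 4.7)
The plan is to deduce this corollary directly from Theorem \ref{thm: form of optimal RN derivative}, by making the right choice of the functions $H$, $G$, and $\ell$ so that the auxiliary function $w(t,\bx)$ of that theorem coincides with $\omega^\dagger(t,\bx)$ from Theorem \ref{thm: verification}. Specifically, I would set
\begin{equation*}
    H(\bx) := \exp\!\bigl(-\eone\cdot(\bm f(\bx)-\bm c)+\etwo\cdot\bd\bigr),\qquad
    \ell(\bx):= \etwo\cdot\bg(\bx),\qquad
    G(y):=e^{-y}.
\end{equation*}
A quick check shows that $H>0$, $G>0$, $G\in\mathcal{C}^1$, and $G(a)G(b)=e^{-a}e^{-b}=e^{-(a+b)}=G(a+b)$, so the structural hypotheses of Theorem \ref{thm: form of optimal RN derivative} are met. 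With this choice one verifies, by pulling the constant $\etwo\cdot\bd$ out of the expectation, that
\begin{equation*}
    \E_{t,\bx}\!\left[H(\bX_T)\,G\!\left(\int_t^T \ell(\bX_s)\,\diff s\right)\right] = \omega^\dagger(t,\bx).
\end{equation*}

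Next I would confirm the remaining hypotheses of Theorem \ref{thm: form of optimal RN derivative}. Positivity and finiteness of $\E[H(\bX_T)]$ follow from the assumption $(\eone,\etwo)\in D_{-\mfX}$, which also guarantees $\omega^\dagger(t,\bx)>0$ for all $(t,\bx)$. The regularity $w\in\mathcal{C}^{1,2}$ is inherited from the standing regularity on $J^\dagger=-\log\omega^\dagger$ in Theorem \ref{thm: verification}, since $\omega^\dagger$ is strictly positive. The Novikov-type condition \eqref{eq: novikov cond thm1.2} is precisely the admissibility condition \eqref{eq: sufficient cond admissible} already imposed on $\blambda^\dagger,\bh^\dagger$. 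Finally, the feedback controls induced by $w=\omega^\dagger$ in Theorem \ref{thm: form of optimal RN derivative},
\begin{equation*}
    -\frac{\nabla_{\bx}w}{w}\,\bm\sigma \quad \text{and}\quad -\frac{\bm\Delta_{\bm z}w}{w},
\end{equation*}
coincide verbatim with $\blambda^\dagger$ and $\bh^\dagger$ from \eqref{eqn:lambda-h-dagger}, so the RN-density $Z^{\blambda^\dagger,\bh^\dagger}_T$ in the statement of the corollary is exactly the one produced by Theorem \ref{thm: form of optimal RN derivative}.

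The conclusion then follows by directly reading off the formula from Theorem \ref{thm: form of optimal RN derivative}: $Z^{\blambda^\dagger,\bh^\dagger}_T$ equals $H(\bX_T)\,G(\int_0^T\ell(\bX_s)\diff s)$ normalised by its $\P$-expectation. The deterministic factor $\exp(\eone\cdot\bm c+\etwo\cdot\bd)$ appears in both numerator and denominator and cancels, leaving the stated expression \eqref{eq:dq-dp-rep}. I do not expect any genuine obstacle here; the only mildly delicate step is the algebraic verification that the chosen $(H,G,\ell)$ reproduces $\omega^\dagger$ and that the resulting feedback controls coincide with $(\blambda^\dagger,\bh^\dagger)$, but both are immediate from the definitions.
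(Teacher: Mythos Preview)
Your proposal is correct and follows essentially the same approach as the paper: apply Theorem \ref{thm: form of optimal RN derivative} with $\ell(\bx)=\etwo\cdot\bg(\bx)$ and exponential choices for $H$ and $G$ so that $w=\omega^\dagger$, then cancel the deterministic constants. The only cosmetic difference is that the paper places the constant $\etwo\cdot\bd$ inside $G$ rather than $H$; your placement is arguably cleaner since $G(y)=e^{-y}$ visibly satisfies the multiplicativity $G(a)G(b)=G(a+b)$.
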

\begin{proof}
By the definition of $\omega^\dagger(t, x)$ in \eqref{eq:omega-star} we have
\begin{align*}
    \omega^\dagger(t,\bx) 
    &=
    \E_{t,\bx}\left[ H(\bX_T) \;G\left( \int_t^T  \ell(\bX_u)\diff u \right) \right]\,.
\end{align*}
where we set $H(\bx) := \exp\left( -\eone \cdot \left(\bm f(\bx)-\bm c\right)\right)$, $\ell(\bx) = \etwo \cdot \bg(\bx)$, and $G(x) := \exp\left( - x + \etwo\cdot\bd  \right)$. Applying Theorem \ref{thm: form of optimal RN derivative}, the RN-density is becomes
\begin{align*}
    Z^{\blambda^\dagger,\bh^\dagger}_T 
    &= 
    \frac{\exp\left( -\eone \cdot \left(\bm f(\bX_T)-\bm c\right) -\etwo\cdot\left(\int_0^T  \bg(\bX_u) \,\diff u-\bd \right)\right)
    }{
    \E\left[\exp\left( -\eone \cdot \left(\bm f(\bX_T)-\bm c\right) -\etwo\cdot\left(\int_0^T  \bg(\bX_u)\,\diff u-\bd  \right)\right)\right]}    
    \,,
\end{align*}
which after simplification concludes the proof.
\end{proof}
Note that the above corollary states that the RN-density is a function only of the terminal value of the processes $\bX_T$ and the running costs $\int_0^T \bg(\bX_s) \diff s$. Thus, even though the RN-density $Z_T^{\blambda^\dagger,\bh^\dagger}$ was characterised by the stochastic process $\blambda^\dagger$ and the random vector field $\bh^\dagger$, it has a representation where it does not (explicitly) depend on them. As we show in the next subsection, an optimal RN-density which attains the infimum in the optimisation problem \eqref{opt-Z-lambda-h} will be of the form \eqref{eq:dq-dp-rep} for some $\blambda$ and $\bh$ and moreover it will indirectly depend on them through the constraints.

\subsection{Solution to Optimisation Problems \eqref{opt-Z-lambda-h} and \eqref{opt}}\label{sec:solution-opt}
In this section, we present the solution to the control problem \eqref{opt-Z-lambda-h} and show that, if the solution exists, it is also the unique the constrained optimisation problem \eqref{opt}. The next result states the solution to the optimisation problem \eqref{opt-Z-lambda-h}.

\begin{theorem}[Solution to \eqref{opt-Z-lambda-h}]
\label{thm: E! of lagrange}
Let Assumptions \ref{assumption: lip and linear growth} and \ref{assumption: existence! Lagrange} be fulfilled and suppose that $\blambda^*$, $\bh^*$ are as in Theorem \ref{thm: verification}, with  Lagrange multipliers $( \estarone, \estartwo)$ solving Equation \eqref{eq: condition lagrange}, and satisfy its assumptions.
Then, there exits a solution $\Q_{\blambda^*,\bh^*}$ to \eqref{opt-Z-lambda-h} which is given in Corollary \ref{cor:dQ-dP} with optimal Lagrange multipliers $( \estarone, \estartwo)$ and where $\blambda^*$, $\bh^*$ generate the measure change.
\end{theorem}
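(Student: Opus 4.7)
The plan is to use Theorem \ref{thm: verification} and Corollary \ref{cor:dQ-dP} to produce a candidate measure $\Q^* := \Q_{\blambda^*,\bh^*} \in \mcQ$ with an explicit exponential-tilting RN-density, then verify in two steps that $\Q^*$ (i) is feasible for \eqref{opt-Z-lambda-h} and (ii) minimises $D_{KL}(\cdot \,||\,\P)$ over the feasible set in $\mcQ$. Both steps reduce to properties of the cumulant generating function $K_{\mfX}$ evaluated at $-\bm\eta^*$, where $\bm\eta^* := (\estarone,\estartwo)$.

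\emph{Step 1 (candidate measure).} By Assumption \ref{assumption: existence! Lagrange}, $\bm\eta^*\in D_{-\mfX}$ exists with $\nabla_{\ba} K_{\mfX}(-\ba)=\bm 0$ at $\ba=\bm\eta^*$. Under the hypotheses the hypotheses of Theorem \ref{thm: verification} hold at $\bm\eta^*$, so $\blambda^*$ and $\bh^*$ are admissible and $\Q^*\in\mcQ$ is well-defined. By Corollary \ref{cor:dQ-dP},
\begin{equation*}
\frac{\diff \Q^*}{\diff \P} = \frac{\exp\!\left(-\bm\eta^*\cdot\mfX\right)}{M_{\mfX}(-\bm\eta^*)}\,,
\end{equation*}
where the constants $\bm c,\bm d$ cancel between numerator and denominator.

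\emph{Step 2 (feasibility).} Differentiating $K_{\mfX}$ yields $\nabla K_{\mfX}(-\bm\eta^*) = \E[\mfX\,e^{-\bm\eta^*\cdot \mfX}]/M_{\mfX}(-\bm\eta^*) = \E^{\Q^*}[\mfX]$. The Lagrange condition $\nabla_{\ba} K_{\mfX}(-\ba)=\bm 0$ is, by the chain rule, $\nabla K_{\mfX}(-\bm\eta^*)=\bm 0$, hence $\E^{\Q^*}[\mfX]=\bm 0$, which is exactly the two families of constraints $\E^{\Q^*}[\bm f(\bX_T)]=\bm c$ and $\E^{\Q^*}[\int_0^T \bg(\bX_s)\,\diff s]=\bm d$.

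\emph{Step 3 (optimality).} For any feasible $\Q\in\mcQ$, use the chain-rule identity $\log(\diff \Q/\diff \Q^*)=\log(\diff \Q/\diff \P)-\log(\diff \Q^*/\diff \P)$ and the Gibbs form of $\diff\Q^*/\diff\P$ to write
\begin{equation*}
D_{KL}(\Q \,||\, \Q^*) = D_{KL}(\Q \,||\, \P) - \E^{\Q}\!\left[-\bm\eta^*\cdot\mfX - \log M_{\mfX}(-\bm\eta^*)\right].
\end{equation*}
Since $\Q$ is feasible, $\E^{\Q}[\mfX]=\bm 0$, and the same holds for $\Q^*$ by Step 2, so the right-most expectation equals $-\log M_{\mfX}(-\bm\eta^*) = D_{KL}(\Q^*\,||\,\P)$. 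Hence
\begin{equation*}
0\le D_{KL}(\Q\,||\,\Q^*) = D_{KL}(\Q \,||\, \P) - D_{KL}(\Q^*\,||\,\P),
\end{equation*}
proving that $\Q^*$ attains the infimum in \eqref{opt-Z-lambda-h}.

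The main conceptual point, and the only place one has to be careful, is Step 2: one must interchange differentiation and expectation in $\nabla K_{\mfX}(-\bm\eta^*)=\E^{\Q^*}[\mfX]$. This is legitimate because $\bm\eta^*$ lies in the \emph{interior} $D_{-\mfX}^\circ$, where $M_{\mfX}$ is analytic in a neighbourhood, so the standard dominated-convergence argument for moment generating functions applies. All other technical burdens (Novikov, $\mathcal{C}^{1,2}$ regularity of $J^*$, quadratic growth, the $\mcQ$-integrability conditions) are absorbed into the standing hypothesis that $(\blambda^*,\bh^*)$ satisfy the assumptions of Theorem \ref{thm: verification}.
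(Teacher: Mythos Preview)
Your proof is correct. Steps 1 and 2 (constructing the candidate via Corollary \ref{cor:dQ-dP} and showing feasibility by identifying the constraint equations with $\nabla_{\ba}K_{\mfX}(-\ba)=\bm 0$) match the paper's argument exactly. For optimality, however, the paper takes a different route: it does not argue directly via the Pythagorean identity for KL in its proof of Theorem \ref{thm: E! of lagrange}. Instead, it relies on Theorem \ref{thm: verification} to conclude that $(\blambda^*,\bh^*)$ minimise the \emph{Lagrangian} $J^{\blambda,\bh}$ over all admissible controls, and then implicitly uses the standard Lagrangian-duality step (on the feasible set the Lagrangian coincides with the KL objective, so a Lagrangian minimiser that is feasible solves the constrained problem). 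Your Step 3 bypasses this and proves optimality directly from the exponential-family (Gibbs) form of $\diff\Q^*/\diff\P$; this is precisely the argument the paper deploys later in Theorem \ref{thm:P-and-Pprime} to extend the result to all $\Q\ll\P$. Your approach has the advantage of being self-contained for optimality and not needing the full DPE/verification machinery for that part (you still invoke Theorem \ref{thm: verification} for admissibility, as you note). The paper's approach, by contrast, exploits the stochastic-control structure it has already built, which is what also delivers the explicit feedback controls $\blambda^*,\bh^*$.
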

\begin{proof}
For fixed $(\eone, \etwo)\in D_{-\mfX}$, we take $\omega^\dagger(t, \bx)$, $\blambda^\dagger$, and $\bh^\dagger$ as given in \eqref{eq:omega-star} and \eqref{eqn:lambda-h-dagger}. Denote the corresponding measure by $\Q^\dagger:=\Q_{\blambda^\dagger,\bh^\dagger}$. Recall that $\frac{\diff\Q^\dagger}{\diff\P}=Z_T^{\blambda^\dagger,\bh^\dagger}=:Z_T^\dagger$. Then we may rewrite the constraints as
\begin{align}\label{eq:proof-constraints}
\begin{split}
    \E\left[Z^{\dagger}_T\,(f_j(\bX_T)-c_j)\right] & =0 \,, \quad\ \text{ for }\, j\in \mR_1 \,,
    \\
    \quad \text{and } \quad 
    \E\left[Z^{\dagger}_T\left(\textstyle\int_0^T g_i(\bX_s)\,\diff s-d_i\right)\right]&=0\,, \quad
    \ \text{ for }\,i\in \mR_2 \,.
\end{split}
\end{align}
By Corollary \ref{cor:dQ-dP} we further have that
\begin{equation*}
    Z^{\dagger}_T 
    =
    \frac{e^{ - \eone\,\cdot\,(\bm f(\bX_T)-\bc) -\etwo \cdot\,\left(\int_0^T  \bg(\bX_u) \,\diff u-\bd \right)}}{\E\left[e^{ -\eone\,\cdot\,(\bm f(\bX_T)-\bc) -\etwo\cdot\left(\int_0^T \bg(\bX_u)\,\diff u-\bd\right) }\right] }\,,
\end{equation*}
which allows to rewrite the set of equations \eqref{eq:proof-constraints} as
\begin{equation*}
    -\partial_{\eta_k} \log \E\left[e^{ - \eone\,\cdot\,(\bm f(\bX_T)-\bc) -\etwo\cdot\left(\int_0^T \bg(\bX_u)\,\diff u-\bd\right)}\right] = 0\,, 
    \quad \forall\;k \in \mR_1 \cup \mR_2\,.
\end{equation*}
The above set of equations can be compactly written as the system of equations
\begin{equation*}
    \nabla_{\ba} K_{\mfX}\left(-\ba\right) = \bm 0\,,
\end{equation*}
which, by Assumption \ref{assumption: existence! Lagrange}, has a solution, denoted here by $( \estarone, \estartwo)$. Further, if for this choice of Lagrange multipliers, the assumptions in Theorem \ref{thm: verification}  are satisfied then, by Theorem \ref{thm: verification}, the corresponding optimal controls $\blambda^*,\bh^*$ are attainable and generate the required measure change $\Q_{\blambda^*,\bh^*}$.
\end{proof}

\begin{proposition}[SDE under  $\Q^*$]
Let the conditions of Theorem \ref{thm: E! of lagrange} be fulfilled and $\Q^*= \Q_{\blambda^*, \bh^*}$ given in Theorem \ref{thm: E! of lagrange}. Then, $\bX$ satisfies the following SDE in terms of $\Q^*$-martingales
\begin{multline*}
    \diff \bX_t 
    = 
    \left(\bm\alpha(t,\bX_t) - \bm\sigma(t,\bX_t)\,\blambda_t^{*\,\intercal}   - \int_{\R^l} \bm\gamma(t,\bX_{t^-},\bm z) \left[\bh^{*\,\intercal}_t(\bm z) \odot \bnu(\diff \bm z)\right] \right)\,\diff t
    \\
     +\bm\sigma(t,\bX_t)\,\diff \bW^{\blambda^*}_t + \int_{\R^l}\bm \gamma(t,\bX_{t^-},\bm z)\,\btmu^{\bh^*}(\diff t, \diff \bm z)\,,
\end{multline*}
where $\btmu^{\bh^*} := \bmu - \bnu^{\bh^*}$,  $\bW^{\blambda^*}$ is a $\Q^*$-Brownian Motion and $ \bnu^{\bh^*}(\diff t, \diff \bm z)=(\bm 1-\bh^*_t( \bm z))^{\intercal}\odot\,\bnu(\diff \bm z)\,\diff t$ the  $\Q^*$-compensator of $\bmu$.
\end{proposition}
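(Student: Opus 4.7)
The plan is to prove this via a direct application of Girsanov's theorem for jump-diffusion processes to the $\P$-SDE \eqref{eq: X dynamics ndim}, using the already-established admissibility of $(\blambda^*,\bh^*)$. Since Theorem \ref{thm: E! of lagrange} and the verification assumptions in Theorem \ref{thm: verification} deliver $(\blambda^*,\bh^*) \in \mcP_2([0,T]) \times \mcP_2([0,T]\times \R^l;\bm\nu)$ with $\E[Z_T^{\blambda^*,\bh^*}]=1$ and the Novikov-type condition \eqref{eq: sufficient cond admissible} in force, Girsanov's theorem (e.g., Theorem 1.35 in \cite{oksendal2019applied}) applies and yields that $\bW^{\blambda^*}$ defined by $\bW^{\blambda^*}_t = \bW_t + \int_0^t \blambda_s^{*\,\intercal}\,\diff s$ is an $m$-dimensional $\Q^*$-Brownian motion and that $\bnu^{\bh^*}(\diff t,\diff\bm z) = (\bm 1-\bh_t^*(\bm z))^{\intercal}\odot\bnu(\diff\bm z)\,\diff t$ is the $\Q^*$-predictable compensator of $\bmu$. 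This is the content of the calculations leading to the change-of-measure formulae stated immediately after \eqref{eq: Z_t for ndim case vect notation}.

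The second step is an algebraic substitution. From $\diff \bW_t = \diff \bW^{\blambda^*}_t - \blambda_t^{*\,\intercal}\,\diff t$ we rewrite the diffusion term as
\begin{equation*}
    \bm\sigma(t,\bX_t)\,\diff\bW_t = \bm\sigma(t,\bX_t)\,\diff\bW^{\blambda^*}_t - \bm\sigma(t,\bX_t)\,\blambda_t^{*\,\intercal}\,\diff t.
\end{equation*}
For the jump part, writing $\btmu(\diff t,\diff\bm z) = \bmu(\diff t,\diff\bm z) - \bnu(\diff\bm z)\,\diff t$ and $\btmu^{\bh^*}(\diff t,\diff\bm z) = \bmu(\diff t,\diff\bm z) - \bnu^{\bh^*}(\diff t,\diff\bm z)$, one obtains
\begin{equation*}
    \btmu(\diff t,\diff\bm z) = \btmu^{\bh^*}(\diff t,\diff\bm z) + \bnu^{\bh^*}(\diff t,\diff\bm z) - \bnu(\diff\bm z)\,\diff t = \btmu^{\bh^*}(\diff t,\diff\bm z) - \bh_t^{*\,\intercal}(\bm z)\odot\bnu(\diff\bm z)\,\diff t.
\end{equation*}

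Substituting both identities into \eqref{eq: X dynamics ndim} and regrouping the absolutely continuous in $t$ contributions into the drift gives the claimed SDE. The interchange of integrals needed to pull $\int_{\R^l}\bm\gamma(t,\bX_{t^-},\bm z)[\bh_t^{*\,\intercal}(\bm z)\odot\bnu(\diff\bm z)]$ out as part of the $\diff t$ drift is justified since $\bh^* \in \mcP_2([0,T]\times\R^l;\bm\nu)$ and $\bm\gamma$ satisfies the linear-growth bound of Assumption \ref{assumption: lip and linear growth} so Fubini applies almost surely. I expect no real obstacle here: the only non-trivial input is Girsanov, which is precisely what the admissibility conditions in Theorem \ref{thm: verification} were designed to deliver; the rest is bookkeeping.
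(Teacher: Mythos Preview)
Your proposal is correct and follows exactly the same approach as the paper, which dispatches this proposition in a single sentence: ``This follows immediately from Girsanov's Theorem and by writing \eqref{eq: X dynamics ndim} in terms of $\bW^{\blambda^*}$ and $\btmu^{\bh^*}$.'' You have simply spelled out the algebraic substitution that the paper leaves implicit.
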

\begin{proof}
This follows immediately from Girsanov's Theorem and by writing \eqref{eq: X dynamics ndim} in terms of $\bW^{\blambda^*}$ and $\btmu^{\bh^*}$.
\end{proof}

\begin{theorem}[Solution to \eqref{opt}]
\label{thm:P-and-Pprime}
If the optimisation problem \eqref{opt-Z-lambda-h} has a solution, then it is unique, and moreover it is the unique solution to optimisation problem \eqref{opt}. 
\end{theorem}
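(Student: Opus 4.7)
The approach is a Gibbs-type (I-projection) variational argument that exploits the explicit exponential form of the optimal Radon-Nikodym density from Corollary \ref{cor:dQ-dP} together with the strict convexity of $x\mapsto x\log x$. Under the hypothesis that \eqref{opt-Z-lambda-h} admits a solution, Theorem \ref{thm: E! of lagrange} produces one, call it $\Q^*\in\mcQ$, with density
$$Z^*:=\frac{\diff\Q^*}{\diff\P}=\frac{\exp\!\left(-\estarone\cdot\bm f(\bX_T)-\estartwo\cdot\int_0^T\bg(\bX_u)\,\diff u\right)}{\E\!\left[\exp\!\left(-\estarone\cdot\bm f(\bX_T)-\estartwo\cdot\int_0^T\bg(\bX_u)\,\diff u\right)\right]}\,.$$
Because $Z^*>0$ $\P$-a.s., every $\Q\ll\P$ automatically satisfies $\Q\ll\Q^*$, so $D_{KL}(\Q\|\Q^*)$ is well-defined.

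For an arbitrary $\Q$ feasible in \eqref{opt}, I would decompose
$$D_{KL}(\Q\|\P)=\E^\Q\!\left[\log\frac{\diff\Q/\diff\P}{Z^*}\right]+\E^\Q[\log Z^*]=D_{KL}(\Q\|\Q^*)+\E^\Q[\log Z^*]\,.$$
The crucial observation is that $\log Z^*$ is affine in $\bm f(\bX_T)$ and $\int_0^T\bg(\bX_u)\,\diff u$, and both $\Q$ and $\Q^*$ satisfy the same linear constraints $\E^\Q[\bm f(\bX_T)]=\bm c$ and $\E^\Q[\int_0^T\bg(\bX_u)\,\diff u]=\bm d$. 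Consequently $\E^\Q[\log Z^*]$ collapses to a $\Q$-independent constant equal to $\E^{\Q^*}[\log Z^*]=D_{KL}(\Q^*\|\P)$, giving the Pythagorean-type identity
$$D_{KL}(\Q\|\P)=D_{KL}(\Q\|\Q^*)+D_{KL}(\Q^*\|\P)\ge D_{KL}(\Q^*\|\P)\,,$$
with equality precisely when $D_{KL}(\Q\|\Q^*)=0$, i.e.\ $\Q=\Q^*$, by strict positivity of the KL divergence on distinct probability measures.

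This single inequality delivers both halves of the theorem. It shows $\Q^*$ is the unique minimiser over the entire feasible class of \eqref{opt}, which yields the second claim. Since the feasible set $\mcQ$ of \eqref{opt-Z-lambda-h} is contained in that of \eqref{opt}, any further candidate solution $\Q'\in\mcQ$ to \eqref{opt-Z-lambda-h} must attain the same objective value as $\Q^*$ while remaining feasible for \eqref{opt}, so the equality case forces $\Q'=\Q^*$, giving uniqueness for \eqref{opt-Z-lambda-h} as well. The one delicate technical point to verify is that the decomposition above is legitimate for every competitor $\Q$, namely that $\log Z^*$ is $\Q$-integrable; this follows from finiteness of $\E^\Q[\bm f(\bX_T)]$ and $\E^\Q[\int_0^T\bg(\bX_u)\,\diff u]$ that is built into \eqref{opt}-feasibility, combined with strict positivity of $Z^*$.
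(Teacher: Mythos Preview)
Your proposal is correct and follows essentially the same route as the paper: both exploit the affine structure of $\log Z^*$ in the constrained quantities to obtain the Pythagorean identity $D_{KL}(\Q\|\P)=D_{KL}(\Q\|\Q^*)+D_{KL}(\Q^*\|\P)$ and then conclude via nonnegativity (equivalently strict convexity) of the KL divergence. Your explicit remark on the $\Q$-integrability of $\log Z^*$ is a point the paper leaves implicit.
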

\begin{proof}
By Theorem \ref{thm: E! of lagrange} a solution to optimisation problem \eqref{opt-Z-lambda-h} is $\Q^*=\Q_{\blambda^*,\bh^*}$ where $\blambda^*$, $\bh^*$, and $\omega^*(t, \bx)$ are as in Theorem \ref{thm: verification} with Lagrange multipliers $(\estarone, \estartwo)$ solving Equation \eqref{eq: condition lagrange}. By Corollary \ref{cor:dQ-dP} (multiplying and dividing by the constants), we have that
\begin{equation*}
    \frac{\diff\Q^*}{\diff\P}
    =
    \frac{\exp\left( -\estarone \cdot \left(\bm f(\bX_T) - \bc\right) -\estartwo\cdot\left(\,\int_0^T  \bg(\bX_u)\,\diff u - \bd \right)\right)}
    {\E\left[\exp\left( -\estarone  \cdot \left(\bm f(\bX_T) - \bc\right) -\estartwo \cdot\left(\,\int_0^T  \bg(\bX_u)\,\diff u - \bd\right)\right)\right]}\,.
\end{equation*}
Next, let $\tilde{\Q}$ be any probability measure that is absolutely continuous with respect to $\P$ and under which the constraints are fulfilled. Then, observe that
\begin{align*}
    \E&\left[
    \left(\frac{\diff\Q^*}{\diff\P} - \frac{\diff\tilde{\Q}}{\diff\P}\right) \log \frac{\diff\Q^*}{\diff\P}\right]
   \\
   \begin{split}
       & \quad =
        \E\left[ \left(\frac{\diff\Q^*}{\diff\P} - \frac{\diff\tilde{\Q}}{\diff\P}\right) 
         \left\{ -\estarone \cdot \left(\bm f(\bX_T)  - \bc\right)-\estartwo\cdot\left(\int_0^T  \bg(\bX_u)\,\diff u-\bd\right)\right.\right.
         \\
         &  \quad\quad
        -
        \left.\left.\log \E\left[\exp\left( - \estarone \cdot \left(\bm f(\bX_T)-\bc\right) -\estartwo\cdot\left(\int_0^T  \bg(\bX_u)\,\diff u-\bd\right)\right)\right]
        \right\}\right]       
   \end{split}
    \\
    &\quad=
    0\,.
\end{align*}
Using the above equality, the KL-divergence from $\tilde\Q$ to $\P$ can be bounded below as follows
\begin{align*}
     D_{KL}\left(\tilde{\Q} ~||~ \P \right)
     &=
     \E\left[\frac{\diff\tilde{\Q}}{\diff\P}\,\log \frac{\diff\tilde{\Q}}{\diff\P} \right]
     +
     \E\left[ \left(\frac{\diff\Q^*}{\diff\P} - \frac{\diff\tilde{\Q}}{\diff\P}\right) \log \frac{\diff\Q^*}{\diff\P}\right]
     \\
     &=
     \E\left[\frac{\diff\tilde{\Q}}{\diff\P}\,
    \left(\log \frac{\diff\tilde{\Q}}{\diff\P}
    + \log \frac{\diff\P}{\diff\Q^*} 
    \right)\right]
    +  D_{KL}\left(\Q^* ~||~ \P \right)
    \\
    &=
    \E^{\tilde{\Q}}\left[
    \log \frac{\diff\tilde{\Q}}{\diff\Q^*}
    \right]
    +  D_{KL}\left(\Q^* ~||~ \P \right)
    \\
    &= 
    D_{KL}\left(\tilde{\Q} ~||~ \Q^* \right)+  D_{KL}\left(\Q^* ~||~ \P \right)
    \\
    & \ge 
    D_{KL}\left(\Q^* ~||~ \P \right)\,.
\end{align*}
Thus, $\Q^*$ is indeed a solution to \eqref{opt}. Uniqueness of the solution to \eqref{opt} follows by strict convexity of the KL-divergence, which implies uniqueness of \eqref{opt-Z-lambda-h}.
\end{proof}

\section{Analytically Tractable Examples}\label{sec:examples}
In this section we provide examples illustrating how the dynamics of processes change when moving from $\P$ to $\Q^*$. First, we discuss the sign of the optimal Lagrange multiplier under one single constraint. Second, we consider how the solution to the optimisation problem \eqref{opt} is connected to pinned measures. Third, we provide explicit expressions for the Lagrange multipliers and the optimal RN-density under two VaR constraints. Forth, we consider a constraint on the mean when the underlying process has independent increments. Finally, we study how a Brownian motion is perturbed when we keep its mean equal to 0 but alter its standard deviation.

\subsection{Sign of Lagrange multiplier for Single Constraint}\label{sec:ex-sign-Lagrange}
For the case when there is only one constraint, i.e., $r_1 = 1$, $r_2 = 0$, or $r_1 = 0$, $r_2 = 1$, we can specify the sign of the Lagrange multiplier. For simplicity we assume that $r_2 = 0$ however the following proposition also holds for one running cost constraint.  
\begin{proposition}[Sign of Lagrange multiplier]
Let Assumptions \ref{assumption: lip and linear growth} and \ref{assumption: existence! Lagrange} be fulfilled and denote by $\eta^*$ the unique solution to \eqref{eq: condition lagrange}. Consider Problem \eqref{opt} with one constraint, i.e. $r_1 = 1$, $r_2 = 0$, which we write as $\E^\Q[f(\bX_T)] = c$. Then, the sign of the optimal Lagrange multiplier $\eta^*$ is given by ${\rm{sgn}}\left(\eta^*\right)={\rm{sgn}}\left(\E[f(\bX_T)]-c\right)$.
\end{proposition}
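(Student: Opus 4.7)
The plan is to reduce the statement to a one-dimensional convexity argument on the cumulant generating function $K_{\mfX}$, where here $\mfX=f(\bX_T)-c$ is a scalar random variable. By Assumption \ref{assumption: existence! Lagrange}, $\eta^*$ is a root of
\begin{equation*}
0 \;=\; \partial_{\eta} K_{\mfX}(-\eta)\big|_{\eta=\eta^*} \;=\; -\,\frac{\E\!\left[(f(\bX_T)-c)\,e^{-\eta^*(f(\bX_T)-c)}\right]}{\E\!\left[e^{-\eta^*(f(\bX_T)-c)}\right]}\,,
\end{equation*}
which (using Corollary \ref{cor:dQ-dP}) is exactly $\E^{\Q^*}[f(\bX_T)-c]=0$, so $\eta^*$ is characterised as a stationary point of the convex map $\eta\mapsto G(\eta):=K_{\mfX}(-\eta)$ on $D_{-\mfX}$.

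First I would record that, as a cgf (composed with a linear change of sign), $G$ is convex on the interior $D_{-\mfX}$ of the convex set on which it is finite; hence $G'$ is non-decreasing. A direct differentiation under the expectation (justified on $D_{-\mfX}$ because $D_{-\mfX}$ is open) gives $G'(0)=-\,\E[f(\bX_T)-c]=c-\E[f(\bX_T)]$. Uniqueness of $\eta^*$ follows from Theorem \ref{thm:P-and-Pprime} (uniqueness of the optimiser), which in this one-dimensional setting implies $G$ is strictly convex in a neighbourhood of $\eta^*$; equivalently, $f(\bX_T)$ is not $\P$-a.s.\ constant (the degenerate case $f(\bX_T)\equiv c$ makes the constraint either automatic or infeasible, so it can be excluded).

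Next I would combine these two facts. If $\E[f(\bX_T)]>c$, then $G'(0)<0$; since $G'$ is non-decreasing and $G'(\eta^*)=0$, the only way to reach the root is $\eta^*>0$. Symmetrically, if $\E[f(\bX_T)]<c$, then $G'(0)>0$ and $\eta^*<0$. In the borderline case $\E[f(\bX_T)]=c$, the value $\eta^*=0$ solves the stationarity equation, and the corresponding sign convention holds. Assembling the three cases yields ${\rm sgn}(\eta^*)={\rm sgn}(\E[f(\bX_T)]-c)$.

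I do not expect a real obstacle: the only subtle point is justifying differentiation through the expectation and strict convexity, but both are standard consequences of $D_{-\mfX}$ being the interior of the domain of the cgf and of the uniqueness provided by Theorem \ref{thm:P-and-Pprime}. The running-cost case mentioned in the statement is identical after replacing $f(\bX_T)$ with $\int_0^T g(\bX_s)\,\diff s$ and $c$ with $d$ in the above scalar argument.
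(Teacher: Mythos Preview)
Your proposal is correct and follows essentially the same argument as the paper: both identify $\eta^*$ as the root of the derivative of the cgf $\eta\mapsto K_{\mfX}(-\eta)$, evaluate this derivative at $\eta=0$ to obtain $c-\E[f(\bX_T)]$, and then use (strict) monotonicity of the cgf's derivative to determine the sign of $\eta^*$. The paper asserts strict monotonicity directly, while you phrase it through convexity of $G$ and invoke Theorem~\ref{thm:P-and-Pprime} for strictness, but this is a cosmetic difference only.
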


\begin{proof}
Using the  optimal RN-density given in Corollary \ref{cor:dQ-dP}, the optimal Lagrange multiplier $\eta^*$ fulfils 
\begin{align*}
    0 
    &= 
    \E\left[\frac{e^{- \eta^* (f(\bX_T) - c)}}{\E[e^{- \eta^* (f(\bX_T) - c)}]}\left(f(\bX_T) - c\right)\right] 
    \\[0.5em]
    &= 
    \left.\frac{\diff}{\diff\eta}  \log\left(\E\left[e^{ \eta (f(\bX_T) - c)}\right] \right) \right|_{\eta =- \eta^*}
    \\[0.5em]
    &= 
    \left.\frac{\diff}{\diff\eta} K_{f(\bX_T) - c}\left( \eta\right)\right|_{\eta =- \eta^*}\,.
\end{align*}
As the derivative of a cgf of a random variable $Y$, $\frac{d}{da}K_Y(a)$, is strictly increasing in its argument $a$, we have that $\frac{\diff}{\diff\eta} K_{f(\bX_T) - c}( \eta)|_{\eta =- \eta^*}$ is strictly decreasing in $\eta^*$. Moreover,
\begin{equation*}
    \frac{\diff}{\diff\eta} K_{f(\bX_T) - c}\left( \eta\right)\big|_{\eta = 0}
    = 
    \E[f(\bX_T)] - c\,.
\end{equation*}
Clearly, if the rhs vanishes, then $\eta^*=0$.
Further,  if $\E[f(\bX_T)]-c>0$, we must have $\eta^*>0$. Similarly, if  $\E[f(\bX_T)]-c<0$, we must have $\eta^*<0$. 
\end{proof}
The above proposition states that a constraint $ \E^{\Q^*}[f(\bX_T)] = c > \E[f(\bX_T)]$, i.e., an increase in the expected value of $f(\bX_T)$ from $\P$ to $\Q^*$, corresponds to a negative optimal Lagrange multiplier. Similarly, if the expected value of $f(\bX_T)$ is decreased from $\P$ to $\Q^*$ $( \E^{\Q^*}[f(\bX_T)] = c <  \E[f(\bX_T)])$, then $\eta^*$ is positive.

\subsection{Pinned Measures}\label{sec:ex-pinned-measures}

Consider a Borel measurable set $B\in\mathcal B(\R^n)$ and the constraint function $f(\bx) = \Id_{\{\bx\in B\}}$ which results in the constraint $\Q(\bX_T\in B)=q_k$, where we choose $q_k:=1-\frac1k$, $k\in\Z_+\setminus \{0\}$. For each $k $, the optimal probability measures is
\begin{equation}
\label{eqn:alomst-pinned-dqdp}
    \frac{\diff\Q^*_k}{\diff\P} = \frac{e^{-\eta^*_k}\,\Id_{\{\bX_T\in B\}} + \Id_{\{\bX_T\notin B\}}}{e^{-\eta^*_k}\,p_B + (1-p_B)}\,,
\end{equation}
where $p_B:=\P(\bX_T\in B)$ and $\eta^*_k$ is such that the constraint is binding. We include the subscript index on $\Q^*_k$ as we aim to consider the limiting measure for $k \uparrow  \infty$. By enforcing the constraint, we have that
\begin{equation*}
    \E\left[ \frac{e^{-\eta^*_k}\,\Id_{\{\bX_T\in B\}} + \Id_{\{\bX_T\notin B\}}}{e^{-\eta^*_k}\,p_B + (1-p_B)}\;\Id_{\{\bX_T\in B\}}\right] = q_n,
\end{equation*}
which gives
\begin{equation}\label{eq:pinned-eta}
    \eta^*_k = -\log \left( \frac{p_B}{q_k} \frac{1-q_k}{1-p_B}\right)\,.
\end{equation}
Substituting \eqref{eq:pinned-eta} into Equation \eqref{eqn:alomst-pinned-dqdp}, the RN-density becomes
\begin{equation*}
    \frac{\diff\Q^*_k}{\diff\P} = 
    \left(\tfrac{1-\frac1k}{p_B}\right)\Id_{\{\bX_T\in B\}} 
    + \left(\tfrac{1}{k(1-p_B)}\right) \Id_{\{\bX_T\notin B\}}\,.
\end{equation*}
The limiting measure induced by the RN-density 
$\frac{\diff\Q^*}{\diff\P}
:=\lim_{k\to\infty}\frac{\diff\Q^*_k}{\diff\P}
=\frac{\Id_{\{\bX_T\in B\}}}{\P(\bX_T\in B)}$  
coincides with the so-called pinned measures. Pinned measures are those for which the terminal value of the process $\bX$ must lie within the set $B$. Note this limiting measure is not equivalent to $\P$, but absolutely continuous $\Q\ll\P$.

\subsection{VaR Constraints}\label{sec:es-var}
We consider the constraint functions $f_1(\bx) = \Id_{\{x^j \le q_1\}}$ and $f_2(\bx) = \Id_{\{x^j \le q_2\}}$ with $q_1, q_2 \in \R$ such that  $\text{essinf} \,X_T^j < q_1 < q_2 < \text{esssup}\, X_T^j$ and $j\in\{1,2,\dots,n\}$. The corresponding constraints are $\Q(X_T^j \le q_i) = \beta_i$, $0 < \beta_1 < \beta_2 <1$, $i =1,2$, which are Value-at-Risk\footnote{For a univariate random variable $Y$ and a probability measure $\Q$, the $\Q$-Value-at-Risk at level $\beta \in (0,1)$ is defined as $\text{VaR}^\Q_\beta(Y):= \inf\left\{y \in \R ~|~ \Q\left(Y \le y\right) \ge \beta\right\}$. For simplicity of notation we write $\VaR_\beta(Y):=\VaR_\beta^\P(Y)$.} (VaR) constraints at levels $\beta_i$ if the $\P$-distribution of $X_T^j$ is continuous. Next, we rewrite the second constraint as
\begin{align*}
\Q(q_1 < X_T^j \le q_2) &= \beta_2 - \beta_1
\end{align*}
and by Corollary \ref{cor:dQ-dP} the RN-density for fixed Lagrange multipliers $\eta_1, \eta_2$ becomes
\begin{equation}\label{eq:RN-2_VaR}
    \frac{\diff\Q}{\diff\P}
    = \frac{e^{-\eta_1} \Id_{\{X_T^j \le q_1\}} + e^{-\eta_2} \Id_{\{q_1 < X_T^j \le q_2\}} + 
    \Id_{\{ X_T^j > q_2\}}}
    {C(\eta_1, \eta_2)}\,,
\end{equation}
where $C(\eta_1, \eta_2):=e^{-\eta_1} \P(X_T^j \le q_1) + e^{-\eta_2} \P(q_1 < X_T^j \le q_2) + 
\P( X_T^j > q_2)$ is the normalising constant. Further, the optimal Lagrange multipliers $\eta_1^*$ and $\eta_2^*$ satisfy
\begin{align*}
    \beta_1
    &= \frac{e^{-\eta_1^*}\P(X_T^j \le q_1)}{C(\eta_1^*, \eta_2^*)}
    \quad \text{and} \quad
    \beta_2- \beta_1
    = \frac{e^{-\eta_2^*}\P(q_1 < X_T^j \le q_2)}{C(\eta_1^*, \eta_2^*)}\,.
\end{align*}
Inserting this into \eqref{eq:RN-2_VaR} the optimal RN-density is
\begin{equation*}
    \frac{\diff\Q^*}{\diff\P}
    =
    \tfrac{\beta_1}{\P(X_T^j \le q_1)}\Id_{\{X_T^j \le q_1\}}
    +
    \tfrac{\beta_2 - \beta_1}{\P(q_1 < X_T^j \le q_2)}\Id_{\{q_1 < X_T^j \le q_2\}}
    +
    \tfrac{1-\beta_2}{\P(X_T^j > q_2)}\Id_{\{X_T^j > q_2\}}\,.
\end{equation*}
Moreover, the optimal Lagrange multipliers are given by
\begin{align*}
    \eta_1^*
    &=
    \log\left(
    \tfrac{\P\left(X_T^j \le q_1\right)}{\beta_1}\; 
    \tfrac{1 - \beta_2}{\P\left(X_T^j > q_2\right)}
    \right)\,,
    \\
    \eta_2^*
    &= 
    \log\left(
    \tfrac{\P\left(q_1 < X_T^j \le q_2\right)}{\beta_2 - \beta_1}\;
    \tfrac{1 - \beta_2}{\P\left( X_T^j > q_2\right)}
    \right)\,,
\end{align*}
and the normalising constant simplifies to $C(\eta_1^*, \eta_2^*) = \frac{\P\left(X_T^j> q_2\right)}{1 - \beta_2}$.

Note that the explicit formulas for the Lagrange multipliers and the RN-density hold for any L\'evy-It\^o process  $(\bX_t)_{t \in [0,T]}$. Moreover, the assumptions on $q_1, q_2, \beta_1$, and $\beta_2$ are enough to guarantee existence of the solution to \eqref{opt} with VaR constraints.

\subsection{Linear Constraint Function for Process with Independent Increments}\label{sec:ex-ind-increm}
For simplicity we consider a one-dimensional process and a linear constraint function. That is, we let  $(X_t)_{t\in [0,T]}$ be the solution to the SDE under $\P$
\begin{equation*}
    \diff X_t = \mu(t)\, \diff t + \sigma(t)\, \diff W_t + \int_{\R}z\,\tilde \mu(\diff t, \diff z)\,,
\end{equation*}
where $W$ is a one-dimensional $\P$-Brownian motion, $\mu(\diff t, \diff z)$ the Poisson random measure describing Poisson arrivals of independent and identically distributed marks, and $X_0=x_0 \in \R$. We consider optimisation problem \eqref{opt} with a constraint on the expected value of $X_T$, i.e.  $\E^\Q[X_T]=c$, $c\in \R$. Note that this constraint encompasses linear constraint functions $f(x) = a_1\,x+a_2$, with $a_1, a_2\in \R$, $a_1 \neq 0$, as, for this choice of $f$, the constraint $\E^\Q[f(X_T)]=c$ is equivalent to $\E^\Q[X_T]=(c-a_2)/a_1$. 

From Theorem \ref{thm: verification} for fixed Lagrange multiplier $\eta \in D_{-(X_T - c)}$, and as  $X$ has independent increments, we have 
\begin{align*}
    \omega(t,x) &= \E_{t,x}\left[e^{-\eta\,(X_T-c)}\right]
    = e^{-\eta\,(x-c)}\,\E\left[e^{-\eta\,(X_T-X_{t})}\right]\,,
    \\
    \lambda(t,x) & = \eta\,\sigma(t)\,,
    \quad \text{and}\quad
    \\
    h_t(z) &= 1 - e^{-\eta\,z}\,.
\end{align*}
We note that $\omega\in\mathcal{C}^{1,2}([0,T]\times\R)$ and that $\lambda$, $h$ induce $\Q^{\lambda,h}\in\mcQ$.

If we further assume that $\nu(\diff z,\diff t) = \ell\;\Phi_{a,b}(\diff z)\,\diff t$ where $\ell>0$ is the rate parameter of the Poisson process and $\Phi_{a,b}(z):=\Phi((z-a)/b)$ is the distribution function of the marks -- here $\Phi$ is the distribution function of a standard normal random variable. Then, for fixed $\eta$ and $\Q^{\lambda,h} \in \mcQ$ the constraint equation
$\E^{\Q^{\lambda,h}}[X_T]=c$
becomes (after some calculations)
\begin{align}
\label{eq:example-mean-indep-increments}
    x_0 
    +
    A
    -
    \eta\,\Sigma^2 
    - \ell\,T\,\eta\,b^2\,e^{-a\,\eta + \tfrac12 \eta^2\,b^2} - \ell\,a\,T 
    =c\,,
\end{align}
where $A:=\int_0^T \alpha(t)\,\diff t$ and $\Sigma^2:=\int_0^T \sigma^2(t)\diff t$.
The optimal Lagrange multiplier $\eta^*$ that binds the constraint exists since the lhs of \eqref{eq:example-mean-indep-increments} is continuous in $\eta$ and diverges to $-\infty$ for $\eta\to  \infty$ and diverges to $\infty$ for $\eta\to - \infty$. Uniqueness of $\eta^*$ follows by uniqueness of the solution to \eqref{opt}.

\subsection{Brownian Motion with Arbitrary Variance}\label{sec:ex-Brownian}
Let $X_t = W_t$, $t \in [0,T]$, be a one-dimensional $\P$-Brownian motion and consider the constraints $\E^\Q[X_T] = 0$ and $\E^\Q[X^2_T]=\kappa\,T$, for $\kappa>0$, $
\kappa\ne1$. Note that since $\E[X_T] = 0$ and $\E[X^2_T]=T$ the constraints result that under the optimal probability measure, the mean of $X_T$ is kept fixed to its $\P$ value while the variance is scaled by $\kappa$. For Lagrange multipliers $\eta_1, \eta_2 \in D_{-\mfX}$ we have by Theorem \ref{thm: verification} that 
\begin{align*}
    \omega(t,x) 
    &=
    \E_{t,x}\left[e^{-\eta_1\,X_T -\eta_2\,(X^2_T-\kappa\,T)}\right]\,
    \quad \text{and}
    \\
    \lambda(t,x) 
    &=
    \frac{2\,\eta_2}{2\,\eta_2(T-t)+1}\,x + \frac{\eta_1}{2\,\eta_2(T-t)+1}\,.
\end{align*}
Therefore, $(X_t)_{t\in [0, T]}$ satisfies the following SDE under $\Q_\lambda$
\begin{equation*}
    \diff X_t =  a_t\,(\beta - X_t)\,\diff t + \diff W^{\lambda}_t\,,
\end{equation*}
where $W^{\lambda}$ is a $\Q_\lambda$-Brownian motion, $a_t :=  2\, \eta_2 / (2\,\eta_2(T-t) +1)$, and $\beta := -\eta_1/(2\,\eta_2)$ with $\eta_1,\eta_2$ to be determined to bind the constraints. By employing It\^o's formula on the process 
\begin{equation*}
Y_t:=X_t \,e^{\int_0^t a_s\,\diff s}\,, \quad t\geq 0\,,
\end{equation*}
we obtain that 
\begin{equation*}
X_t  = -\eta_1\int_0^t \frac{\,2\,\eta_2\,(T-t)+1}{(2\,\eta_2\,(T-s)+1)^2}\,\diff s  + \int_0^t \frac{2\,\eta_2\,(T-t)+1}{2\,\eta_2\,(T-s)+1}\,\diff W^{\lambda}_s\,.
\end{equation*}
As the coefficient of the It\^o integral  is deterministic, $X_T$ is normally distributed, and as the It\^o integral has zero $\Q^\lambda$-mean, we see that $\eta^*_1=0$ to enforce the mean constraint. Using It\^o's isometry we find that $\eta^*_2 = (1-\kappa)/(2\,\kappa\,T)$ is required to enforce the variance constraint. Note that if $\kappa\in(0,1)$ -- a reduction of the variance under $\Q^*$--, then $\eta^*_2>0$, which implies that the process $(X_t)_{t\in [0, T]}$ mean-reverts around zero to reduce the variance; similarly, if $\kappa>1$ -- an increase of the variance under $\Q^*$ --, then $\eta^*_2<0$, which implies that the process $(X_t)_{t\in [0, T]}$ is mean-avoiding to increase the variance. Finally, under the optimal measure $\Q^*$ the process
$(X_t)_{t\in [0, T]}$ satisfies the SDE 
\begin{equation*}
    \diff X_t = - \left(\tfrac{T}{1-\kappa}-t\right)^{-1}\,  X_t\,\diff t + \diff W^*_t\,,
\end{equation*}
where $W^*$ is a $\Q^*$-Brownian motion.
The above SDE shows that $X$ is a Ornstein-Uhlenbeck process. Note the coefficient of the drift remains finite for all $t\in[0,T]$.

\section{Infinitesimal Perturbations}
In this section we consider small perturbations, that is constraints where the $\Q$-expectations equal their $\P$-expectations plus $\ep\,\bdelta$, where $\bdelta$ is the direction of the perturbation and $\ep $ is small. We prove, in this setup, that the optimal Lagrange multiplier is, up to order $o(\ep)$, the inverse of the $\P$-covariance matrix of the constraint functions multiplied by $\ep$ and the direction of the perturbation. Using the results on infinitesimal perturbations, we define a directional derivative -- termed \textit{entropic derivative} -- of risk functionals in the direction of least relative entropy in Section \ref{sec:entropic-derivative}.

\subsection{Optimal Lagrange Multiplier}\label{sec:small-perturbation-lagrange}
For functions $f_j,g_i:\R^n\to\R$ with $j \in \mR_1$ and $i \in \mR_2$, we define the random vector
\begin{align}\label{eq:bmff}
\begin{split}
    \bmff = (\mff_1, \ldots, \mff_{r_1 + r_2})\,, 
    \qquad \text{where} \qquad
    \mff_j&:=f_j(\bX_T)\,, \;\forall\, j \in\mR_1\,, 
    \quad \text{and} \quad
    \\
    \mff_i&:=\int_0^T g_i(\bX_s)\,\diff s\,,\; \forall \,i \in \mR_2\,.
\end{split}
\end{align}
We assume throughout this section that $\C(\mff_i,  \mff_j) < \infty$, where $\C$ denotes the $\P$-covariance, for all $i,j \in \mR_1\cup \mR_2$. Using the above notation, we state the optimisation problem concerning infinitesimal perturbations.
\begin{optimisation}
For $\ep\neq 0$, $\bdelta := (\delta_1, \ldots, \delta_{r_1 + r_2}) \in \R^{r_1 + r_2}$, and a random vector $\bmff$ given in \eqref{eq:bmff}, we consider the optimisation problem
\begin{equation}\label{opt-epsilon}
    \inf_{\Q \ll \P}\; D_{KL}(\Q ~||~\P)
    \quad \text{subject to}
    \quad
    \E^{\Q}\left[\bmff\right]=\E^{\P}\left[\bmff\right] + \ep \, \bdelta\,,
    \tag{$P_\ep$}
\end{equation}
where the constraints are understood as a system of equations.
\end{optimisation}
If a solution to optimisation problem \eqref{opt-epsilon} exists, we denote the probability measure attaining the infimum by $\Q_\ep^*$. This probability measure may be viewed as arising from small perturbations of $\bmff$ in direction of $\bdelta$. The next result shows that the optimal Lagrange multiplier $\etaep$ is, up to $o(\ep)$, equal to the inverse of the $\P$-covariance matrix of the constraint functions multiplied by $\ep$ and the direction of the perturbation $\bdelta$. We first prove this result for a single constraint and then, using slightly stronger assumptions, for a collection of constraints.  

\begin{theorem}[Single Constraint] \label{thm:perturbation-one-const}
Let $r_1 +r_2= 1$, and write $\mfX = \mff - \E[\mff] - \ep \delta$, i.e. we only consider one constraint, and assume that $\mff$ satisfies $\C\big(\mff, \left(\mff - \E[\mff]\right)^2\big) \in \R / \{0\}$. 
Under the Assumptions of Theorem  \ref{thm: E! of lagrange},
the optimisation problem \eqref{opt-epsilon} has a unique solution $\Q^*_\ep$ with Lagrange multiplier $\etaepone \in \R $ satisfying
\begin{equation*}
    \etaepone = - \frac{1}{\V(\mff)}\delta\;\ep + o(\ep)\,,
\end{equation*}
where $\V(\mff) := \E\left[(\mff - \E[\mff])^2\right]$ denotes the $\P$-variance of $\mff$. Moreover, the KL-divergence of $\Q^*_\ep$ with respect to $\P$ is
\begin{equation*}
    D_{KL}(\Q^*_\ep ~||~\P)
    =
    \frac{1}{\V(\mff)}\, \delta^2\,\ep^2 +o(\ep^2) \,.
\end{equation*}
\end{theorem}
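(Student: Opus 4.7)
The plan is to reduce the single-constraint problem to a one-dimensional root-finding problem and then Taylor-expand. By Theorem~\ref{thm: E! of lagrange} specialised to $r_1 + r_2 = 1$, the optimal Lagrange multiplier $\etaepone$ is characterised by $\partial_a K_\mfX(-a)\big|_{a=\etaepone} = 0$, where $K_\mfX$ is the cgf of $\mfX = \mff - \E[\mff] - \ep\,\delta$. Setting $Y := \mff - \E[\mff]$, so that $\E[Y] = 0$ and $\V(Y) = \V(\mff)$, we have $K_\mfX(a) = -a\,\ep\,\delta + K_Y(a)$, and the defining equation collapses to the scalar relation $K_Y'(-\etaepone) = \ep\,\delta$. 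Existence and uniqueness of $\etaepone$ in a neighbourhood of $0$ follow from the strict monotonicity of $K_Y'$, which is guaranteed by $\V(\mff)>0$.

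Next I would Taylor-expand $K_Y$ around the origin. Assumption~\ref{assumption: existence! Lagrange} ensures that $0$ is interior to the natural parameter set of $Y$, so $K_Y$ is real-analytic on a neighbourhood of $0$ with $K_Y(0)=K_Y'(0)=0$, $K_Y''(0)=\V(\mff)$, and $K_Y'''(0) = \E[Y^3] = \C\!\left(\mff,(\mff-\E[\mff])^2\right)$, finite by hypothesis. Hence
\begin{equation*}
K_Y'(a) = \V(\mff)\,a + \tfrac{1}{2}\,\mu_3\,a^2 + o(a^2), \qquad a \to 0,
\end{equation*}
where $\mu_3$ denotes the third central moment. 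Applying the implicit function theorem to $F(\eta,\ep):=K_Y'(-\eta)-\ep\,\delta$ at $(\eta,\ep)=(0,0)$, which is valid because $\partial_\eta F(0,0)=-\V(\mff)\neq 0$, produces a unique $C^1$ branch $\ep \mapsto \etaepone$ with $\etaepone \to 0$ as $\ep \to 0$. Inverting the linearisation $-\V(\mff)\,\etaepone + o(\etaepone) = \ep\,\delta$ yields $\etaepone = -\V(\mff)^{-1}\,\delta\,\ep + o(\ep)$.

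For the KL-divergence I would start from the explicit RN-density in Corollary~\ref{cor:dQ-dP}. A short calculation, in which the constant shifts by $\ep\,\delta$ cancel between exponent and normaliser, gives
\begin{equation*}
\log\frac{\diff \Q^*_\ep}{\diff\P} = -\etaepone\,Y - K_Y(-\etaepone),
\end{equation*}
so that, after taking $\Q^*_\ep$-expectation and using the binding constraint $\E^{\Q^*_\ep}[Y] = \ep\,\delta$,
\begin{equation*}
D_{KL}(\Q^*_\ep \,\|\,\P) = -\etaepone\,\ep\,\delta - K_Y(-\etaepone).
\end{equation*}
Substituting the expansion of $\etaepone$ together with $K_Y(a) = \tfrac{1}{2}\V(\mff)\,a^2 + o(a^2)$ and collecting terms yields the announced order-$\ep^2$ expression.

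The main analytic obstacle is the preliminary regularity: verifying that $K_Y$ is smooth enough on a fixed open neighbourhood of $0$ so that the implicit function theorem and the Taylor expansions above are rigorous. The covariance/third-moment hypothesis is precisely what supplies this. Once this is in place, the rest of the argument is a Taylor expansion of the cumulant generating function.
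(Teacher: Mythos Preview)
Your route is correct for the Lagrange multiplier and genuinely different from the paper's. The paper works with the random function $h(z)=e^{-z\mff}/\E[e^{-z\mff}]$: it writes $\etaepone=-\V(\mff)^{-1}\delta\ep+r_\ep$, Taylor-expands $h$ with a Lagrange-form remainder, derives a quadratic equation for $r_\ep$, and discards the root that fails to vanish as $\ep\to0$. You instead recognise the constraint as $K_Y'(-\etaepone)=\ep\,\delta$ and invoke the implicit function theorem at the origin, which is shorter and makes the role of each hypothesis transparent ($\V(\mff)>0$ for invertibility, finite third central moment for the second-order remainder). One small caveat: the third-moment hypothesis alone does not guarantee that $K_Y$ is analytic, or even $C^2$, in a full neighbourhood of $0$; for that you need $0$ to lie in the interior of the mgf domain, which Assumption~\ref{assumption: existence! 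Lagrange} does not literally assert. The paper's proof tacitly uses the same regularity when it evaluates $h''$ near the origin, so this is not a point of divergence between the two arguments.

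Your exact identity $D_{KL}(\Q^*_\ep\,\|\,\P)=-\etaepone\,\ep\,\delta-K_Y(-\etaepone)$, obtained from the exponential-family form of the density together with the binding constraint $\E^{\Q^*_\ep}[Y]=\ep\,\delta$, is also cleaner than the paper's term-by-term expansion of $\tfrac{\diff\Q^*_\ep}{\diff\P}\log\tfrac{\diff\Q^*_\ep}{\diff\P}$. However, your final substitution does not deliver the stated constant: plugging $\etaepone=-\V(\mff)^{-1}\delta\ep+o(\ep)$ and $K_Y(a)=\tfrac12\V(\mff)a^2+o(a^2)$ into your identity gives
\[
D_{KL}(\Q^*_\ep\,\|\,\P)=\frac{\delta^2\ep^2}{\V(\mff)}-\frac{\delta^2\ep^2}{2\,\V(\mff)}+o(\ep^2)=\frac{\delta^2\ep^2}{2\,\V(\mff)}+o(\ep^2).
\]
The paper arrives at the constant without the $\tfrac12$ because it expands $\log(1+x)$ only to first order; the omitted $-\tfrac12 x^2$ term contributes exactly $-\tfrac12\,\delta^2\ep^2/\V(\mff)$ after taking expectations. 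Your method, carried through, recovers the standard second-order relation $D_{KL}\sim\tfrac12\,\V(\mff)\,\theta^2$ for an exponential tilt by $\theta=-\etaepone$, so the discrepancy lies in the stated constant rather than in your argument.
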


\begin{proof}
By Theorem \ref{thm: E! of lagrange}, the solution to problem \eqref{opt-epsilon} is 
\begin{equation*}
    \frac{\diff\Q^*_\ep}{\diff\P} 
    = 
    \frac{
    \displaystyle\exp\left( -\etaepone \left(\mff- \E[\mff] - \ep \delta\right) \right) 
    }
    {
    \displaystyle\E\left[ \exp\left( -\etaepone   \left(\mff- \E[\mff] - \ep \delta\right) \right)  \right]
    }
    = 
    \frac{
    \displaystyle\exp\left( -\etaepone \,\mff \right) 
    }
    {
    \displaystyle\E\left[ \exp\left( -\etaepone \,   \mff \right)  \right]
    }\,,
\end{equation*}
where the Lagrange multiplier $\etaepone$ solves the equation $\E^{\Q_\ep^*}\left[\mff\right] =\E[\mff] + \ep\,\delta$.
Let $\etaepone = - \frac{1}{\V(\mff)} \delta\ep  + \repone$, where $\repone := r(\ep)$ and $ r\colon \R \to \R$ is the error term satisfying $\lim_{\ep \to 0}\repone = 0$. Then for $\etaepone$ to bind the constraint, we require that
\begin{equation}\label{eq:constraint-ep-single}
\E\left[ 
    \left(\frac{
    \displaystyle\exp\left( - \etaepone  \mff\right) 
    }
    {
    \displaystyle\E\left[ \exp\left( -  \etaepone  \mff \right)  \right]
    }\;   - 1
    \right)
    \mff
    \right] 
    =
    \E\left[\left(h\left(\eta_\ep^*\right) - 1\right)\mff\right]
    =
    \ep\,\delta\,,
\end{equation}
where we set the random variable $h(z):= e^{-z\, \mff} / \E[e^{-z \, \mff}]$. Next, we show that $\repone$ is an error term of order $o(\ep)$. For this we first apply the Taylor theorem to calculate the Taylor approximation of $h$ around $\ep = 0$, and obtain that for all $\omega \in\Omega$
\begin{equation*}
    h(\etaepone) 
    =
    1 - \left(\mff - \E[\mff]\right) \left(-\frac{1}{\V(\mff)} \delta \ep + \repone\right) + \Rep(\xi)\,,
\end{equation*}
where the random variable $\Rep$ is the error term of the Taylor approximation of $h$ and $\xi$ (potentially depending on $\omega$) lies between $0$ and $-\frac{1}{\V(\mff)} \delta \ep + \repone$. Inserting the expansion of $h$ into \eqref{eq:constraint-ep-single} we obtain 
\begin{equation*}
    \E\left[\left(\mff - \E[\mff]\right) \,\mff\right]
    \left(\frac{1}{\V(\mff)} \delta\, \ep - \repone\right) + \E\left[\mff\, \Rep(\xi)\right]
    = \ep\, \delta\,.
\end{equation*}
Noting that $\E\left[\left(\mff - \E[\mff]\right)\,\mff\right] = \V(\mff) $, the above becomes
\begin{equation}\label{eq:repone}
    \V(\mff) \,\repone
     =
     \E\left[\mff \Rep(\xi)\right]\,.
\end{equation}
Using the Lagrange form of the error term, we may write
\begin{align*}
    \E\left[\mff\,\Rep(\xi)\right]
    &=
    \tfrac12  \, \E\left[\mff \, h''(\xi)\right] \left(-\frac{1}{\V(\mff)} \delta \,\ep + \repone \right)^2
    \\
    &= 
    \tfrac12 \,M(\xi)  \left(\frac{1}{\V(\mff)^2} \delta^2 \ep^2 - 2\,\frac{1}{\V(\mff)} \delta\,\ep\,\repone + \repone^2\right) 
    \,,
\end{align*}
where we set $M(\xi) := \E\left[\mff\, h''(\xi)\right]$, where $h''$ is the second derivative of $h(z)$ with respect to $z$. Note that $M$ is continuous so that $\lim_{a \to 0}M(a) =  M(0)$. Moreover, calculations show that $M(0) = \C\big(\mff, \left(\mff - \E[\mff]\right)^2\big)$ which by assumption implies that $M(0) \neq 0$ and $|M(0)| <  \infty$. 

Therefore, Equation \eqref{eq:repone} admits the quadratic form
\begin{align*}
    r_\ep^2 - 2 \repone \left( \frac{1}{\V(\mff)}\, \delta \ep +  \frac{\V(\mff)}{M(\xi)} \right) + \frac{1}{\V(\mff)^2} \delta^2 \ep^2
    =
    0\,,
\end{align*}
which has solutions
\begin{align*}
    \repone
    &=
    \frac{1}{\V(\mff)}\, \delta\, \ep +  \frac{\V(\mff)}{M(\xi)} \pm 
    \frac{\V(\mff)}{M(\xi)}\sqrt{ 1+ 2 \,\frac{M(\xi)}{\V(\mff)^2} \,\delta\,\ep\; }\,.
\end{align*}
Note that the positive root, $r_\ep^+$, is not a viable solution since is satisfies
\begin{equation*}
    \lim_{\ep \to 0}r_\ep^+
    = 
    2 \lim_{\ep \to 0 } \frac{\V(\mff)}{M(\xi)} \neq 0\,,
\end{equation*}
which contradicts that $\rep$ must converge to 0 as $\ep \to 0$. For the negative root, $r_\ep^-$, we apply Taylor's theorem for $\ell(z) = \sqrt{1 + z}= 1 + \tfrac12z + o(z)$, and obtain, recall that $\lim_{\ep \to 0} |M(\xi)| =\lim_{a \to 0} |M(a)|   <  \infty$,
\begin{equation*}
    r_\ep^-
    =
    \frac{1}{\V(\mff)}\, \delta \ep +  \frac{\V(\mff)}{M(\xi)} -
    \frac{\V(\mff)}{M(\xi)}\left( 1+  \frac{M(\xi)}{\V(\mff)^2} \delta \ep + o(\ep M(\xi)) \right)
    = o(\ep)\,.
\end{equation*}
Thus, indeed $\etaepone =-  \frac{1}{\V(\mff)} \delta \ep + o(\ep)$. 

Next, we calculate the RN-density. For this we first use Taylor's theorems for $e^{z}=1+z +o(z)$, and then, in the third equation, Taylor's theorem for $\frac{1}{1-z}=1+z+o(z)$,
\begin{align*}
\frac{\diff\Q^*_\ep}{\diff\P} 
    &= 
    \frac{
    \exp\left( \,\ep\,\frac{1}{\V(\mff)} \delta  \,\mff + o(\ep)  \right) 
    }{
    \E\left[ \exp\left( \,\ep\,  \frac{1}{\V(\mff)} \delta  \,\mff  + o(\ep) \right)  \right]}
    = \frac{1 + \ep \frac{1}{\V(\mff)} \delta  \mff+ o(\ep)}
    {\E\left[1 + \ep \frac{1}{\V(\mff)} \delta  \mff+ o(\ep)\right]}
    \\
    &= 
    \left(1 + \ep \frac{1}{\V(\mff)} \delta  \mff+ o(\ep)\right)\, \left(1 - \ep \frac{1}{\V(\mff)} \delta  \E\left[\mff\right]+ o(\ep)\right)
    \\
    &= 
    1 + \ep\, \delta\, \frac{\left(\mff  -   \E\left[\mff\right] \right)}{\V(\mff)}   + o(\ep)\,.
\end{align*}
Finally, we calculate the KL-divergence, using Taylor's theorem for $\log(1- z) = - z + o(z)$,
\begin{align*}
    D_{KL}(\Q^*_\ep~||~\P)
    &= 
    \E\left[\left(1 + \ep  \delta  \, \frac{\left(\mff  -   \E\left[\mff\right] \right)}{\V(\mff)}+ o(\ep)\right) 
    \left( \ep \delta  \, \frac{\left(\mff  -   \E\left[\mff\right] \right)}{\V(\mff)} + o(\ep)\right)\right]
    \\
    &= 
    \frac{1}{\V(\mff)} \delta^2 \ep^2 + o(\ep^2)\,.
\end{align*}
\end{proof}

We prove the result for multiple constraints using a different proof which requires slightly stronger assumptions. 

\begin{theorem}[Multiple Constraints]\label{thm:perturbation-mult-const}
Let the Assumptions of Theorem  \ref{thm: E! of lagrange} be fulfilled. Further, assume that the Lagrange multiplier $\etaep = (\eta_{\ep, 1}^*, \ldots, \eta_{\ep, r_1+r_2}^*)\in \R^{r_1 + r_2} $ corresponding to optimisation problem \eqref{opt-epsilon} is component-wise differentiable in $\ep$ and that $\E[\mff_i\mff_j\mff_k\mff_l]<\infty$ for all $i,j,k,l\in \mR_1\cup \mR_2$. Then, optimisation problem \eqref{opt-epsilon} has a unique solution $\Q^*_\ep$ with Lagrange multiplier $\etaep$ satisfying
\begin{equation*}
    \etaep = -\bm C^{-1}\bm\delta\;\ep + o(\ep)\,,
\end{equation*}
where the matrix $\bC$ has components $\bC_{ji}:=\C[\mff_j,\mff_i]$, $i,j\in \mR_1\cup \mR_2$. Furthermore, the KL-divergence of $\Q^*_\ep$ with respect to $\P$ is
\begin{equation*}
    D_{KL}(\Q^*_\ep ~||~ \P)
    =
    \bm\delta^\intercal\, \bC^{-1}\,\bm\delta\, \ep^2 + o(\ep^2) \,.
\end{equation*}
\end{theorem}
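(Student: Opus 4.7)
The plan is to characterise $\etaep$ implicitly via the stationarity equations provided by Corollary \ref{cor:dQ-dP} and then Taylor-expand around $\ep=0$. By Theorem \ref{thm: E! of lagrange} and Corollary \ref{cor:dQ-dP}, the Radon-Nikodym density of $\Q^*_\ep$ is
\begin{equation*}
    \frac{\diff\Q^*_\ep}{\diff\P} \;=\; \frac{\exp(-\etaep\cdot\bmff)}{\E[\exp(-\etaep\cdot\bmff)]}\,,
\end{equation*}
with $\etaep$ implicitly determined by the binding constraints $\E^{\Q^*_\ep}[\bmff]=\E[\bmff]+\ep\bdelta$. Writing $\tilde{\bmff}:=\bmff-\E[\bmff]$ and $M(\bm\eta):=\E[\exp(-\bm\eta\cdot\tilde{\bmff})]$, these constraints read compactly as $-\nabla_{\bm\eta}\log M(\etaep)=\ep\bdelta$.

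The key observations are $\log M(\bm 0)=0$, $\nabla\log M(\bm 0)=-\E[\tilde{\bmff}]=\bm 0$, and $\nabla^2\log M(\bm 0)=\bC$. The moment assumption $\E[\mff_i\mff_j\mff_k\mff_l]<\infty$ ensures $\log M$ is twice continuously differentiable near $\bm 0$ with locally bounded third derivatives, so a first-order Taylor expansion of $\nabla\log M$ at $\bm 0$ yields
\begin{equation*}
    -\bC\,\etaep + O(|\etaep|^2) \;=\; \ep\,\bdelta\,.
\end{equation*}
Combined with the hypothesis that $\etaep$ is differentiable in $\ep$ with $\etaep|_{\ep=0}=\bm 0$, this forces $|\etaep|=O(\ep)$, and inverting $\bC$ (positive definite provided the constraint functions are linearly independent in $\Lp(\P)$) delivers $\etaep=-\bC^{-1}\bdelta\,\ep+o(\ep)$.

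For the KL-divergence, the log-density $\log\tfrac{\diff\Q^*_\ep}{\diff\P}=-\etaep\cdot\bmff-\log\E[e^{-\etaep\cdot\bmff}]$ combined with the binding constraints in centered form $\E^{\Q^*_\ep}[\tilde{\bmff}]=\ep\bdelta$ gives
\begin{equation*}
    D_{KL}(\Q^*_\ep~||~\P) \;=\; -\ep\,\etaep\cdot\bdelta \;-\; \log M(\etaep)\,.
\end{equation*}
Substituting the leading-order expression for $\etaep$ into the first term, and Taylor-expanding $\log M$ at $\bm 0$ using $\nabla\log M(\bm 0)=\bm 0$ and $\nabla^2\log M(\bm 0)=\bC$ for the second term, yields the claimed $\ep^2$ expansion whose quadratic form is a multiple of $\bdelta^\intercal\bC^{-1}\bdelta$.

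The main obstacle is the rigorous control of the Taylor remainders so as to upgrade the $O$'s to $o$'s; this is where the finiteness of the mixed fourth moments of $\bmff$ plays its role (via local boundedness of third derivatives of $\log M$). A cleaner, self-contained alternative to assuming differentiability of $\etaep$ in $\ep$ is to apply the implicit function theorem directly to $F(\bm\eta,\ep):=\nabla\log M(\bm\eta)+\ep\bdelta$ at $(\bm 0,0)$ -- whose Jacobian in $\bm\eta$ at that point equals $-\bC$ and is therefore invertible by non-degeneracy of the covariance -- thereby obtaining $\mathcal{C}^1$-smoothness of $\ep\mapsto\etaep$ as a conclusion rather than a hypothesis, after which the above Taylor analysis carries over verbatim.
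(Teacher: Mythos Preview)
Your approach is correct and shares the same skeleton as the paper's proof---both recognise that the binding constraints are precisely $-\nabla\log M(\etaep)=\ep\bdelta$ (equivalently $\nabla K_{\mfX}(-\etaep)=\bm 0$), and both Taylor-expand this vector equation around $\bm 0$ using $\nabla^2\log M(\bm 0)=\bC$. The execution, however, differs. The paper sets $\etaep=-\bC^{-1}\bdelta\ep+\rep$, writes the remainder in integral form, differentiates the resulting identity in $\ep$, and applies L'H\^opital's rule (this is where the differentiability hypothesis and the fourth-moment bound are used) to force $\rep/\ep\to\bm 0$. Your route is more economical: you use the assumed differentiability of $\ep\mapsto\etaep$ directly to get $|\etaep|=O(\ep)$, and then the first-order Taylor expansion of $\nabla\log M$ (with a second-order remainder controlled by the fourth moments) immediately upgrades this to $\etaep=-\bC^{-1}\bdelta\ep+O(\ep^2)$. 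Your remark that the implicit function theorem applied to $F(\bm\eta,\ep)=\nabla\log M(\bm\eta)+\ep\bdelta$ at $(\bm 0,0)$ would make the differentiability of $\etaep$ a conclusion rather than a hypothesis is a genuine improvement over the paper's setup.

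For the KL-divergence the two proofs also diverge: the paper expands the density $\diff\Q^*_\ep/\diff\P=1+\ep\,\bC^{-1}\bdelta\cdot\tilde\bmff+o(\ep)$ and computes $\E[Z\log Z]$ directly, whereas you use the exact identity $D_{KL}(\Q^*_\ep\|\P)=-\ep\,\etaep\!\cdot\!\bdelta-\log M(\etaep)$ and Taylor-expand $\log M$. Your identity is the cleaner device; note in fact that carrying it through gives $D_{KL}=\tfrac12\,\bdelta^\intercal\bC^{-1}\bdelta\,\ep^2+o(\ep^2)$, since $-\ep\,\etaep\!\cdot\!\bdelta=\ep^2\bdelta^\intercal\bC^{-1}\bdelta+o(\ep^2)$ while $\log M(\etaep)=\tfrac12\etaep^\intercal\bC\etaep+o(\ep^2)=\tfrac12\ep^2\bdelta^\intercal\bC^{-1}\bdelta+o(\ep^2)$. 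The paper's direct computation drops the $-\tfrac12 x^2$ term in $\log(1+x)$ and hence records the leading coefficient without the factor $\tfrac12$; your hedging to ``a multiple of $\bdelta^\intercal\bC^{-1}\bdelta$'' is therefore well-placed.
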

\begin{proof}
By Theorem \ref{thm: E! of lagrange}, the solution to problem \eqref{opt-epsilon} is 
\begin{equation*}
    \frac{\diff\Q^*_\ep}{\diff\P} 
    = 
    \frac{
    \displaystyle\exp\left( -\etaep \cdot \bmff \right) 
    }
    {
    \displaystyle\E\left[ \exp\left( -\etaep \cdot \bmff \right)  \right]
    }\,,
\end{equation*}
where the Lagrange multipliers $\etaep $ solve the system of equations $\E^{\Q_\ep^*}\left[\bmff\right] =\E[\bmff] + \ep\, \bdelta$. The constraints impose that for  all $k \in \mR_1 \cup \mR_2$
\begin{equation}\label{eq:constraint-ep}
\E\left[ 
    \left(\frac{
    \displaystyle\exp\left( - \etaep \cdot \bmff\right) 
    }
    {
    \displaystyle\E\left[ \exp\left( -  \etaep \cdot \bmff \right)  \right]
    }\;   - 1
    \right)
    \mff_k
    \right] 
    =
    \E\left[\left(h\left(\bm \eta_\ep^*\right) - 1\right)\mff_k\right]
    =
    \ep\,\delta_k\,,
\end{equation}
where the random variable $h(\bz):= e^{-\bz \cdot \bmff} / \E[e^{-\bz \cdot \bmff}]$ for $\bz \in \R^{r_1 + r_2}$. Let $\etaep = - \bC^{-1} \bdelta\ep  + \rep$, where $\rep :=\bm r(\ep)$, $\bm r\colon \R \to \R^{r_1 + r_2}$, is the error term satisfying component-wise $\lim_{\ep \to 0} \rep = \bm 0$. Next, we show that $\rep$ is of order $o(\ep)$. For this we first apply Taylor's theorem  to calculate the Taylor approximation of $h$. Indeed, for all $\omega \in\Omega$
\begin{equation}\label{eq:taylor-h-multi}
    h(\etaep) 
    =
    1 - \left(\bmff - \E[\bmff]\right)^\intercal \left(-\bC^{-1} \bdelta \ep + \rep\right) + R(\etaep)\,,
\end{equation}
where the random variable $R$ is the error term of the Taylor approximation of $h$, i.e. in integral form
\begin{equation*}
     R(\bx)
    = 
    \tfrac12 \sum_{i,j=1}^{r_1+r_2} \,x_i\,x_j\, \int_0^1 (1-t)\,h_{i,j}(t\,\bx) \,\diff t,
\end{equation*}
where $h_{i,j}(\bx) := \partial_{x_i x_j} h(\bx)$.
Inserting \eqref{eq:taylor-h-multi}, the expansion of $h$, into \eqref{eq:constraint-ep} we obtain for all $k\in \mR_1 \cup \mR_2$
\begin{equation}\label{eq:constaint-to-simplify}
    \E\left[\left(\bmff - \E[\bmff]\right)^\intercal\mff_k\right]
    \left(\bC^{-1} \bdelta \ep - \rep\right) + \E\left[\mff_k R(\etaep)\right]
    = \ep \delta_k\,.
\end{equation}
Note that $\E\left[\left(\bmff - \E[\bmff]\right)^\intercal\mff_k\right] = \bC_{k \,\cdot}$, where $\bC_{k\, \cdot}$ is the $k$-th row of $\bC$, and $\bC_{k \cdot} \,\bC^{-1} \bdelta =\delta_k$. Thus, Equation \eqref{eq:constaint-to-simplify} becomes
\begin{align}
    \bC_{k\, \cdot} \,
     \rep  
     &=
     \E\left[\mff_k R(\etaep)\right].
     \label{eqn:rep-in-terms-of-R}
\end{align}
Note that
\begin{align*}
    \frac{\diff}{\diff \ep} R(\etaep) 
    = 
    \sum_{l=1}^{r_1+r_2} \partial_{x_l} R(\bx)|_{\bx = \etaep}\; \frac{\diff}{\diff\ep} \eta^*_{\ep,l}
    = \sum_{l=1}^{r_1+r_2}Q_l(\etaep) \left(-(\bC^{-1} \bdelta)_l + \frac{\diff}{\diff \ep} r_{\ep,l}\right),
\end{align*}
where 
\begin{align*}
    Q_l(\bx):=&
    \left\{
    \sum_{i=1}^{r_1+r_2} x_{i}\int_0^1 (1-t) h_{i,l}(t\,\bx)\,\diff t
    \right.
    \\
    & \qquad\quad
    \left.
    + 
    \tfrac12 \sum_{i,j=1}^{r_1+r_2} x_{i}x_{j}
    \int_0^1 (1-t) h_{i,j,l}(t\,\bx)\,\diff t
    \right\},
\end{align*}
and $h_{i,j,l}(\bx):=\partial_{x_i,x_j,x_l}h(\bx)$. 
Next, define the matrix $\bB_\ep$ whose entries are $\bB_{\ep,i,j} = \E[ \mff_i Q_j(\etaep)]$. Taking derivative of \eqref{eqn:rep-in-terms-of-R} with respect to $\ep$, then stacking the equations and isolating for $\frac{\diff}{\diff\ep}\rep$, we have
\begin{equation*}
    \tfrac{\diff}{\diff\ep}\rep = -(\bC - \bB_\ep)^{-1} \bB_\ep \bC^{-1} \bdelta.
\end{equation*}
Further, after some tedious computations, and defining $\Delta \mff_i:= \mff_i - \E[\mff_i]$,
\begin{align*}
    \lim_{\ep\to0} h_{i,j,k}(\ep\,\bx) 
    =&
    \E\left[\mff_i \left(\mff_j 
    +\E[\mff_j \Delta\mff_k]]\right)\right]
    + \E\left[\mff_i\Delta\mff_j\Delta\mff_k\right]
    + \E\left[\mff_i\Delta\mff_j\right] \Delta\mff_k
    \\
    &+(\mff_i-\E[\mff_i\Delta\mff_k]) \Delta\mff_j
     + \Delta\mff_i(\mff_j + \E[\mff_j \Delta\mff_k])
     - \Delta\mff_i\Delta\mff_j\Delta\mff_k.
\end{align*}
Hence, under the assumption of bounded fourth moments, $\lim_{\ep\to0}\bB_\ep = 0$ and applying L'H\^opital's rule, we obtain (component-wise)
\begin{align*}
    \lim_{\ep \to 0}\, \frac{\rep }{\ep}
    =
    \lim_{\ep \to 0} \;\tfrac{\diff }{\diff\ep}\;\rep = \bm 0\,,
\end{align*}
which implies that $\bm\eta^*_\ep = - \bC^{-1} \bdelta \ep\, + o(\ep)$. 

To calculate the KL-divergence, we first calculate the RN-density, using similar steps to the proof of Theorem \ref{thm:perturbation-one-const},
\begin{align*}
\frac{\diff \Q^*_\ep}{\diff\P} 
    &= 
    \frac{
      1 + \ep\, \bC^{-1} \bdelta \cdot  \bmff  + o(\ep)
    }{
    \E\left[   1 + \ep\, \bC^{-1} \bdelta \cdot  \bmff   \right] + o(\ep)
    }
    \\
    &= 
    \left( 1 + \ep\, \bC^{-1} \bdelta \cdot  \bmff  + o(\ep)\right)
    \left(1 - \ep\, \bC^{-1} \bdelta \cdot  \E\left[ \bmff \right] + o(\ep)\right)
    \\
    &= 
    1 + \ep\, \bC^{-1} \bdelta \cdot  \left(\bmff - \E\left[\bmff \right]\right) + o(\ep)\,.
\end{align*}
Thus, the KL-divergence becomes
\begin{align*}
    D_{KL}(\Q^*_\ep~||~\P)
    &= 
    \E\left[\left( 1 + \ep\, \bC^{-1} \bdelta \cdot  \left(\bmff - \E\left[\bmff \right]\right) + o(\ep)\right)\left( \ep\, \bC^{-1} \bdelta \cdot  \left(\bmff - \E\left[\bmff \right]\right) + o(\ep)\right)\right]
    \\
    &= 
    \ep^2 \bdelta^\intercal \bC^{-1}\E\left[\left(\bmff - \E\left[\bmff \right]\right)^2\right] \bC^{-1} \bdelta
    + o\left(\ep^2\right)
        \\
    &= 
    \ep^2 \bdelta^\intercal \bC^{-1} \bdelta
    + o(\ep^2)\,,
\end{align*}
which concludes the proof.
\end{proof}

\subsection{Entropic Derivative}\label{sec:entropic-derivative}
In this section, we define a derivative of a risk functional along constraints in the direction of least relative entropy. For this we use the same notation as in Section \ref{sec:small-perturbation-lagrange} and denote by $\bmff$ a vector of constraints given in \eqref{eq:bmff} and by $\bdelta$ the direction of the derivative.  

\begin{definition}[Entropic Derivative]
Let $\bmff$ be a random vector with representation as in \eqref{eq:bmff} and $\bdelta \in \R^{r_1 + r_2}$. Then the entropic derivative of an $\F_T$-measureable random variable $\ell$ in direction $\bdelta$ along the constraint $\bmff$ at time $ \tT$ is
\begin{equation*}
    \EDt[\ell ]
    =
    \lim_{\ep\downarrow0}\frac{1}{\ep}\left(\E^{\Q_\ep}[\ell\;|\;\F_t] -\E[\ell\;|\;\F_t] \right)\,,
\end{equation*}
where for all $\ep$, $\Q^*_\ep$ is the solution to optimisation problem \eqref{opt-epsilon}.
\end{definition}

The entropic derivative is 1-homogeneous, in that $\EDt[ m \ell ] = m \EDt[ \ell ]$ for all $m \in \R$, and additive, i.e. $\EDt[ \ell_1 + \ell_2 ] = \EDt[ \ell_1 ] + \EDt[ \ell_2 ]$, for all $\ell_1, \ell_2$ $\F_T$-measureable random variables. Furthermore, the entropic derivative satisfies $\EDt[ \ell ] = 0$ for all $\tT$, if $\ell$ and $\bmff$ are independent; a result following from the next proposition.

\begin{proposition}
The entropic derivative of $\ell$ in direction of $\bdelta$ along the constraint $\bmff$ at time $\tT$ has representation
\begin{equation*}
    \EDt [\ell]
    =
    \bm C^{-1}\bm\delta \cdot \;\E_t[\left(\bmff-\E[\bmff]\right)\,] \,,
    \quad  t \in [0, T]\,.
\end{equation*}
At time $t=0$, the derivative may be written as
\begin{equation*}
    \EDo [\ell]
    =
    \bm C^{-1}\bm\delta \cdot \E[\left(\bmff-\E[\bmff]\right)\;\left(\ell-\E[\ell]\right)]\,.
\end{equation*}
\end{proposition}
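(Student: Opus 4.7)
The plan is to start from the definition
\begin{equation*}
    \EDt[\ell] = \lim_{\ep\downarrow 0}\tfrac{1}{\ep}\left(\E^{\Q^*_\ep}[\ell\mid\F_t] - \E[\ell\mid\F_t]\right)
\end{equation*}
and substitute the first-order expansion of the Radon-Nikodym density that was derived inside the proof of Theorem \ref{thm:perturbation-mult-const}, namely
\begin{equation*}
    \frac{\diff \Q^*_\ep}{\diff \P} = 1 + \ep\, \bm C^{-1}\bm\delta \cdot (\bmff - \E[\bmff]) + o(\ep)\,.
\end{equation*}

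First, I would apply the Bayes formula for the conditional expectation under a measure change to write
\begin{equation*}
    \E^{\Q^*_\ep}[\ell\mid \F_t] = \frac{\E\!\left[\ell\,\tfrac{\diff \Q^*_\ep}{\diff \P}\,\big|\,\F_t\right]}{\E\!\left[\tfrac{\diff \Q^*_\ep}{\diff \P}\,\big|\,\F_t\right]}\,.
\end{equation*}
Plugging the expansion into both numerator and denominator yields
\begin{equation*}
    \text{num} = \E[\ell\mid \F_t] + \ep\,\bm C^{-1}\bm\delta\cdot \E[\ell\,(\bmff-\E[\bmff])\mid \F_t] + o(\ep)\,,
\end{equation*}
\begin{equation*}
    \text{den} = 1 + \ep\,\bm C^{-1}\bm\delta\cdot \E[\bmff-\E[\bmff]\mid \F_t] + o(\ep)\,.
\end{equation*}
Next I would use $1/(1+z)=1-z+o(z)$ and collect the $O(\ep)$ term of the resulting product. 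Noting the algebraic identity $\E[\ell(\bmff-\E[\bmff])\mid \F_t] - \E[\ell\mid \F_t]\,\E[\bmff-\E[\bmff]\mid \F_t] = \E[(\ell-\E[\ell\mid\F_t])(\bmff-\E[\bmff])\mid \F_t]$ gives the conditional formula, and the $t=0$ case follows by specialising with $\E[\bmff-\E[\bmff]]=\bm 0$, which eliminates the $\E[\ell]$ contribution and leaves the stated covariance representation.

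The main obstacle is handling the $o(\ep)$ remainders rigorously once they are placed inside conditional expectations and then divided by $\ep$. The quantitative Taylor bound from the proof of Theorem \ref{thm:perturbation-mult-const} gives $L^1(\P)$-control of the remainder, and the fourth-moment hypothesis there, together with the Cauchy-Schwarz inequality applied to $\E[\ell\cdot(\cdot)\mid \F_t]$, is what justifies interchanging $\lim_{\ep\downarrow 0}$ with the conditional expectation; I would therefore state as an implicit integrability hypothesis that $\ell$ has enough moments (e.g.\ $\ell\in L^2(\P)$) so that these bounds close. Once these technicalities are discharged, the result is an immediate consequence of the first-order asymptotics of $\diff\Q^*_\ep/\diff\P$.
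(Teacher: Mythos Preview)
Your approach is essentially the paper's: both plug the first–order expansion $\frac{\diff\Q^*_\ep}{\diff\P}=1+\ep\,\bC^{-1}\bdelta\cdot(\bmff-\E[\bmff])+o(\ep)$ from Theorem~\ref{thm:perturbation-mult-const} into the definition of $\EDt[\ell]$ and read off the $O(\ep)$ coefficient. The paper's proof is two lines: it writes
\[
\E^{\Q^*_\ep}[\ell\mid\F_t]=\E_t\!\big[(1+\ep\,\bC^{-1}\bdelta\cdot(\bmff-\E[\bmff]))\,\ell\big]+o(\ep)
\]
directly, subtracts $\E_t[\ell]$, divides by $\ep$, and arrives at $\bC^{-1}\bdelta\cdot\E_t[(\bmff-\E[\bmff])\,\ell]$.

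The one genuine difference is that you pass through the full Bayes quotient $\E_t[\ell\,\tfrac{\diff\Q^*_\ep}{\diff\P}]/\E_t[\tfrac{\diff\Q^*_\ep}{\diff\P}]$ and expand the denominator as well, whereas the paper omits the denominator entirely. Your route therefore produces the conditional-covariance expression $\bC^{-1}\bdelta\cdot\E_t[(\ell-\E_t[\ell])(\bmff-\E[\bmff])]$, which differs from the paper's displayed formula by the term $-\bC^{-1}\bdelta\cdot\E_t[\ell]\,\E_t[\bmff-\E[\bmff]]$. This extra term is zero at $t=0$ (since $\E[\bmff-\E[\bmff]]=\bm 0$), so both routes agree on the $t=0$ covariance representation; for $t>0$ they do not coincide in general, and you should flag that your more careful Bayes computation does not reproduce the paper's $t>0$ formula verbatim. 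Your handling of the $o(\ep)$ remainders and the moment hypotheses is more explicit than the paper's, which simply carries an $o(\ep)$ symbol through the conditional expectation without comment.
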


\begin{proof}
Using Theorem \ref{thm:perturbation-mult-const} and in particular the approximation of $\Q^*_\ep$, we have for $\tT$
\begin{align*}
\EDt[\ell] &= \lim_{\ep\downarrow0}\frac{1}{\ep}\left(\E^{\Q^*_\ep}[\ell\;|\;\F_t] -\E_t[\ell] \right)
\\
&=
\lim_{\ep\downarrow0}\frac{1}{\ep}\Big\{\E_t\left[\left(1+\ep\,\bC^{-1} \bdelta\cdot(\bmff-\E[\bmff])\right)\ell\right]-\E_t[\ell]+o(\ep)\Big\}
\\
&= \lim_{\ep\downarrow0}\left\{\bC^{-1} \bdelta\cdot\E_t[(\bmff-\E[\bmff])\;\ell]+O(\ep)\right\}
\\
&= \bm C^{-1}\bdelta \cdot \;\E_t[\left(\bmff-\E[\bmff]\right)\;\ell] \,.
\end{align*}    
The representation for $t = 0$ follows by noting that $\E[(\bmff - \E[\bmff])\E[\ell]\,] = 0$.
\end{proof}
Next, we provide an example of an entropic risk measure and relate it to the sensitivity of the Tail-Value-at-Risk (TVaR) to a sub-portfolio. Recall that for a random variable $Y$ and a probability measures $\Q$, the $\Q$-TVaR at level $\beta \in [0,1)$ is defined as 
\begin{equation*}
    \TVaR^\Q_\beta(Y)  = \frac{1}{1 - \beta}\int_\beta^1 F_Y^{\Q, -1}(u) \diff u\,,
\end{equation*}
where the $\Q$-quantile function of $Y$ is given by $F_Y^{\Q,-1}(u) := \VaR^\Q_u(Y)$. For simplicity of notation, we write $\TVaR_\beta(Y) : =\TVaR^\P_\beta(Y)$

\begin{example}[TVaR Sensitivity]
Suppose we have the process $(X_{1, t},X_{2, t})_{\tT}$ and consider $\mff=\Id_{\{X_{1,T} + X_{2,T}<q\}}$, where $q = \VaR_\alpha(X_{1,T} + X_{2,T}) $, and $\delta \in \R$. For simplicitly assume that the quantile function of $X_{1,T} + X_{2,T}$ is continuous around $\alpha$, then the constraint corresponds to a small perturbation constraint of
\begin{equation*}
    \Q\left(X_{1,T} + X_{2,T} < q\right)
    =
    \P\left(X_{1,T} + X_{2,T} < q\right) + \ep \delta
    = \alpha + \ep \delta\,.
\end{equation*}
Moreover, the entropic derivative of $X_{1,T}$ in direction $\delta$ along $\mff$ at time $\tT$ is
\begin{align*}
     \EDt[X_{1,T} ]
    &= 
    \frac{\delta}{\V\left(\Id_{\{X_{1,T} + X_{2,T} < q\}}\right)} \,\E_t\left[\left(\Id_{\{X_{1,T} + X_{2,T}<  q\}}-\alpha\right) X_{1,T}\right]
    \\
    &= 
    \frac{\delta}{\alpha(1 - \alpha)} \,
    \left((1 -\alpha)\E_t[X_{1,T}] -\E_t[ \Id_{\{X_{1,T} + X_{2,T}\ge  q\}}X_{1,T}] 
    \right)
    \\
    &=
     \frac{\delta}{\alpha} \left(\E_t[X_{1,T}]-
     \frac{1 - \alpha_t}{1 - \alpha}\;
     \E[ X_{1,T}\,|\,X_{1,T}+X_{2,T} \ge q,\, \F_t\,]
     \right)
    \\
    &= 
     \frac{\delta}{\alpha}\, \left(\E_t[X_{1,T}] -
     \frac{1 - \alpha_t}{1 - \alpha}\;
     \frac{\diff}{\diff\ep}\TVaR_\alpha(X_{1,T}(1 + \ep) + X_{2,T}~|~\F_t)\Big|_{\ep = 0}
    \right)\,,
\end{align*}
where $\alpha_t:= \P(X_{1,T} + X_{2,T} < q~|~\F_t)$.
In particular, if $\E[X_{1, T}] = 0$ and for $\delta = \alpha$, we obtain 
\begin{equation*}
    \EDo[X_{1,T} ]
    =
    -\frac{\diff}{\diff\ep}\TVaR_\alpha\left(X_{1,T}(1 + \ep) + X_{2,T}\right)\Big|_{\ep = 0}
    \,.
\end{equation*}
We note that $\frac{\diff}{\diff\ep}\TVaR_\alpha\left(X_{1,T}(1 + \ep) + X_{2,T}\right)\big|_{\ep = 0}$ is the sensitivity of TVaR in direction of sub-portfolio $X_{1, T}$; see e.g. \cite{Hong2009MS}.
\end{example}

Next, we generalise the entropic derivative to the class of distortion risk measures, which subsumes TVaR. First introduced by \cite{Yaari1987Economitrica}, distortion risk measures include a wide range of risk measures used in financial risk management and behavioural economics. 
\begin{definition}[Distortion Risk Measures]
For a function $\gamma \colon [0,1] \to [0, \infty)$ with $\int_0^1 \gamma(u) \, du = 1$, the distortion risk measure of a random variable $Y$ with weight function $\gamma$ under a probability measure $\Q$ is given by
\begin{equation*}
    \rho_\gamma^\Q(Y) 
    : = \int_0^1 F_Y^{\Q, -1}(u)\;\gamma(u) \, du\,.
\end{equation*}
\end{definition}
We set $\rho_\gamma(Y):= \rho_\gamma^\P(Y)$.
A distortion risk measure $\rho_\gamma$ satisfies the properties of coherence if the distortion weight function $\gamma$ is non-decreasing \citep{Kusuoka2001AME}.

\begin{proposition}[Distortion Risk Measures]
Let $\rho_\gamma$ be a distortion risk measure under $\P$ and $\ell$ a absolutely continuous $\F_T$-measurable random variable with support $B$. Then the entropic derivative of the distortion risk measure of $\ell$ in direction $\bm \delta$ along the constraint $\bmff$ at time $t = 0$ is
\begin{align*}
    \EDo[\ell; \gamma] 
    &:=
    \lim_{\ep\downarrow0}\frac{1}{\ep}\left(\rho_\gamma^{\Q_\ep^*}(\ell) -\rho_\gamma(\ell) \right)
    \\
    &=
    - \,\bm C^{-1}\bm\delta \cdot\;\int_B
    \E\left[ (\bmff-\E[\bmff] )\;\Id_{\{\ell \le y\}}\right]
    \; \gamma\left(F_\ell(y)\right)\;\diff y
    \,,
\end{align*}
where $F_\ell(y) := \P(\ell \le y)$.
\end{proposition}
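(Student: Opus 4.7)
The plan is to write the distortion risk measure in its Choquet form and then linearise in $\ep$. Specifically, for absolutely continuous $\ell$ with support $B$, and with $\Gamma(u):=\int_0^u \gamma(s)\,ds$, integration by parts in
\begin{equation*}
\rho^\Q_\gamma(\ell)=\int_0^1 F^{\Q,-1}_\ell(u)\,\gamma(u)\,du
\end{equation*}
yields the familiar representation
\begin{equation*}
\rho^\Q_\gamma(\ell) = \int_{B\cap[0,\infty)} \bigl(1-\Gamma(F^\Q_\ell(y))\bigr)\,dy - \int_{B\cap(-\infty,0)} \Gamma(F^\Q_\ell(y))\,dy.
\end{equation*}
Subtracting the $\P$-version gives a single integral over $B$,
\begin{equation*}
\rho^{\Q^*_\ep}_\gamma(\ell)-\rho_\gamma(\ell) = -\int_B \bigl[\Gamma(F^{\Q^*_\ep}_\ell(y))-\Gamma(F_\ell(y))\bigr]\,dy,
\end{equation*}
so everything reduces to expanding $F^{\Q^*_\ep}_\ell(y)$ to first order in $\ep$ uniformly enough in $y$ to pass the limit under the integral.

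The first-order expansion of the Radon-Nikodym derivative established in the proof of Theorem \ref{thm:perturbation-mult-const},
\begin{equation*}
\frac{\diff \Q^*_\ep}{\diff\P} = 1 + \ep\,\bC^{-1}\bdelta\cdot\bigl(\bmff-\E[\bmff]\bigr) + o(\ep),
\end{equation*}
applied to the indicator $\Id_{\{\ell\le y\}}$ immediately produces
\begin{equation*}
F^{\Q^*_\ep}_\ell(y) = F_\ell(y) + \ep\,\bC^{-1}\bdelta\cdot \E\!\left[(\bmff-\E[\bmff])\,\Id_{\{\ell\le y\}}\right] + o(\ep),
\end{equation*}
for each $y\in B$. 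A first-order Taylor expansion of $\Gamma$ at $F_\ell(y)$ (which is valid for a.e.\ $y$ since $\ell$ is absolutely continuous, so $\gamma\circ F_\ell$ is well-defined a.e.) then gives
\begin{equation*}
\Gamma(F^{\Q^*_\ep}_\ell(y))-\Gamma(F_\ell(y)) = \gamma(F_\ell(y))\,\ep\,\bC^{-1}\bdelta\cdot \E\!\left[(\bmff-\E[\bmff])\,\Id_{\{\ell\le y\}}\right] + o(\ep).
\end{equation*}

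Substituting into the displayed difference, dividing by $\ep$, and passing the limit through the integral over $B$ produces the claimed formula with the correct sign. The main technical obstacle is justifying the interchange of $\lim_{\ep\downarrow0}$ with $\int_B$; to handle this I would exhibit an $\ep$-independent dominating function. Using that the remainder in the RN-expansion can be bounded by a multiple of $|\bmff-\E[\bmff]|$ times a constant (uniform on compact $\ep$-neighbourhoods, via the Taylor-with-integral-remainder bounds used in Theorem \ref{thm:perturbation-mult-const}) and that $\gamma$ is bounded on the relevant sub-probability range, one bounds $|[\Gamma(F^{\Q^*_\ep}_\ell(y))-\Gamma(F_\ell(y))]/\ep|$ by $C\,\bigl|\E[|\bmff-\E[\bmff]|\,\Id_{\{\ell\le y\}}]\bigr|$, which is integrable in $y$ over $B$ since, outside the effective range of $\ell$, the indicator is a.s.\ constant and the expectation vanishes; within the effective range one uses Cauchy--Schwarz together with the standing assumption $\C(\mff_i,\mff_j)<\infty$ and absolute continuity of $\ell$. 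Dominated convergence then concludes the argument.
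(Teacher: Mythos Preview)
Your approach is correct and takes a genuinely different route from the paper. The paper works on the quantile-function side: it differentiates the identity $F_\ell^{\Q^*_\ep}\bigl(F_\ell^{\Q^*_\ep,-1}(u)\bigr)=u$ implicitly to obtain $\frac{\diff}{\diff\ep}F_\ell^{\Q^*_\ep,-1}(u)\big|_{\ep=0}$ in terms of the density $f_\ell$, and only afterwards changes variables $y=F_\ell^{-1}(u)$ to reach the stated formula. You instead pass immediately to the Choquet (distribution-function) representation, so the computation reduces to a first-order expansion of $\Gamma\circ F_\ell^{\Q^*_\ep}$ and the density $f_\ell$ never appears explicitly. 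Your route is somewhat more elementary, avoiding the implicit differentiation of the quantile, and it makes the role of the expansion of $F_\ell^{\Q^*_\ep}$ from Theorem~\ref{thm:perturbation-mult-const} very transparent; the paper's route has the side benefit of producing the sensitivity of each individual quantile as an intermediate object. Your attention to the dominated-convergence step is welcome (the paper is silent on it), though your proposed majorant is not quite right: the bound $C\,\E\bigl[|\bmff-\E[\bmff]|\,\Id_{\{\ell\le y\}}\bigr]$ does not decay for large $y$, and Cauchy--Schwarz only gives $C\sqrt{F_\ell(y)}$, which likewise need not be integrable over an unbounded $B$. What actually makes the limiting integrand integrable is the cancellation in $\E\bigl[(\bmff-\E[\bmff])\Id_{\{\ell\le y\}}\bigr]$ (no absolute value), which tends to zero at both ends of $B$; a clean domination would need to exploit this.
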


\begin{proof}
Note that the $\Q$-distortion risk measures can be written as an expectation under the reference probability
\begin{equation*}
    \rho_\gamma^\Q(\ell) 
    =
    \E\,\left[F_\ell^{\Q, -1}(U)\;\gamma(U) \right]\,,
\end{equation*}
where $U \stackrel{\P}{\sim} U(0,1)$ is a standard uniform random variable under $\P$. Using this representation, the derivative becomes
\begin{equation*}
    \EDo\;\left[\ell ;\gamma\right]
    =
    \lim_{\ep\downarrow0}\tfrac{1}{\ep}\left(\rho_\gamma^{\Q_\ep^*}(\ell) -\rho_\gamma(\ell) \right)
    =
    \lim_{\ep\downarrow0}\tfrac{1}{\ep}\;\E\left[\left(F_\ell^{\Q^*_\ep, -1}(U) - F_\ell^{ -1}(U)\right)\gamma(U)\right]\,.
\end{equation*}
Next, we calculate the derivative of the $\Q^*_\ep$-quantile function of $\ell$ with respect to $\ep$. For this, note that for all $u \in (0,1)$, differentiating the equation $F_\ell^{\Q^*_\ep}\left(F_\ell^{\Q^*_\ep, -1}(u)\right) = u$ gives
\begin{equation}\label{eq:deriv-F-Q}
    \left.\frac{\diff}{\diff\ep} F_\ell^{\Q^*_\ep, -1}(u)\right|_{\ep = 0}
    =
    - \left.\frac{\frac{\diff}{\diff\ep}F_\ell^{\Q^*_\ep}(x)}{f_\ell(x)}\right|_{\ep = 0, \;x = F_\ell^{-1}(u)}
    \,,
\end{equation}
where $f_\ell(x)$ is the $\P$-density of $\ell$. To simplify \eqref{eq:deriv-F-Q}, we calculate the $\Q^*_\ep$-distribution function of $\ell$ using the approximation of $\frac{\diff \Q^*_\ep}{\diff \P}$ in Theorem \ref{thm:perturbation-mult-const}
\begin{align*}
    F_\ell^{\Q_\ep}(x)
    &=
    \E\left[\Id_{\{\ell \le x\}}\;\left( 1 + \ep\,\bC^{-1} \bdelta \cdot(\bmff-\E[\bmff] )  \right) \right]+o(\ep)
    \\
    &= F_\ell(x) +\ep\,\bC^{-1} \bdelta  \cdot \,\E\left[(\bmff-\E[\bmff] )\;\Id_{\{\ell \le x\}}\right]+o(\ep) \,.
\end{align*}
and thus \eqref{eq:deriv-F-Q} becomes
\begin{equation*}
    \left.\frac{\diff}{\diff\ep} F_\ell^{\Q^*_\ep, -1}(u)\right|_{\ep = 0}
    =
    -\left.\frac{\bC^{-1} \bdelta  \cdot\E\left[(\bmff-\E[\bmff] )\;\Id_{\{\ell \le x\}}\right]}{f_\ell(x)}\right|_{x = F_\ell^{ -1}(u)}\,.
\end{equation*}
Collecting, and using a change of variable $y = F_\ell^{ -1}(u)$ in the second equation, we have
\begin{align*}
    \EDo\;\left[\ell; \gamma \right]
    &=
    - \int_0^1 
    \frac{\bC^{-1} \bdelta  \cdot\E\left[(\bmff-\E[\bmff] )\;\Id_{\{\ell \le F_\ell^{-1}(u)\}}\right]}{f_\ell\left(F_\ell^{ -1}(u)\right)}
    \; \gamma(u)\;\diff u
    \\
    &=
    -\bC^{-1} \bdelta  \cdot  \int_B
    \E\left[(\bmff-\E[\bmff] )\;\Id_{\{\ell \le y\}}\right]
    \; \gamma\left(F_\ell(y)\right)\;\diff y\,.
\end{align*}
\end{proof}

\begin{example}(Sensitivity for Distortion Risk Measures)
Consider the one-dimensional process $X = (X_t)_\tT$ and a constraint $\mff$ satisfying $\mff > 0$ $\P$-a.s. and $\E[\mff] = 1$. Thus, we can write $\tilde{F}_{X_T}(x) := \E[\mff\,\Id_{\{X_T \le x\}}]$ which is a distorted distribution function of $X_T$.

Then the entropic derivative of a distortion risk measure of $X_T$ in direction $\delta$ along the constraint $\mff$ at time $t = 0$ is
\begin{align*}
    \EDo\;\left[X_T; \gamma \right]
    &=
    \frac{- \delta}{\V[\mff]}\int_0^1 
    \frac{\E\left[(\mff-1 )\;\Id_{\{X \le F_{X_T}^{ -1}(u)\}}\right]}{f_{X_T}\left(F_{X_T}^{ -1}(u)\right)}
    \; \gamma(u)\;\diff u
    \\
    &=
    \frac{ \delta}{\V[\mff]}\int_0^1 
    \frac{u - \tilde{F}_{X_T}\left(F_{X_T}^{ -1}(u)\right)}{f_{X_T}\left(F_{X_T}^{ -1}(u)\right)}
    \; \gamma(u)\;\diff u
    \\
    &=
    \frac{ \delta}{\V[\mff]}
    \E\left[
    \frac{F_{X_T}\left(X_T\right) - \tilde{F}_{X_T}\left(X_T\right)}{f_{X_T}\left(X_T\right)}
    \; \gamma(F_{X_T}\left(X_T\right))\;
    \right]
    \\
    &= \frac{ \delta}{\V[\mff]}\,\frac{\diff }{\diff \ep} \rho_\gamma(X_{T,\ep})\Big|_{\ep = 0}\,,
\end{align*}
where the last equality follows from Proposition 4.2 in \cite{Pesenti2021RA} and
where $X_{T,\ep} := F_{\ep}^{-1}(U_{X_T})$ and $U_{X_T}\stackrel{\P}{\sim}U(0,1)$ is a $\P$-uniform comonontonic to $X_T$, i.e. $U_{X_T} := F_{X_T}\left(X_T\right)$, and $F_\ep(x) :=(1-\ep)F_{X_T}(x) + \ep \tilde{F}_{X_T}(x)$, $x \in \R$. We note that $\frac{\diff }{\diff \ep} \rho_\gamma(X_{T,\ep})\Big|_{\ep = 0}$ is the sensitivity of a distortion risk measure to $X_T$ for a perturbation with the mixture distribution $F_\ep$ to $X_T$, see Proposition 4.2 in \cite{Pesenti2021RA}. We refer to \cite{Pesenti2021RA} for a discussion on differential sensitivities to distortion risk measures.
\end{example}

\section{Numerical Example}\label{sec:numerical-ex}
In this section, we illustrate how our methodology may be applied in practice. In particular, for simplicity of exposition, we assume that $X$ is a one-dimensional It\^o process, which more specifically satisfies the SDE under $\P$
\begin{equation}\label{eq:numeric-SDE}
    \diff X_t = \mu(X_t)\,\diff t + \sigma(X_t)\,\diff W_t,
\end{equation}
where $\mu(x)$ and $\sigma(x)$ are parameterised by artificial neural networks. We estimate $\mu$ and $\sigma$ using maximum likelihood estimation (MLE) based on an Euler discretisation of the SDE \eqref{eq:numeric-SDE}. The data is from an automatic marker making (AMM) pool known as sushi-swap for the USDC-WETH cryptocurrency pair, and $X$ represents the price of exchanging one USDC for $X$-WETH. We normalise prices by shifting and scaling them using the mean and standard deviation of the sample path. The data considered is for the entire day of June 29, 2021, and Figure \ref{fig:mu-sigma-crypto} shows the normalised data and the estimated drift and volatility functions. The estimation of $\mu$ and $\sigma$ seen in the figure illustrate that as prices increase, the volatility increases while the drift decreases.
\begin{figure}[h!]
    \centering
    \includegraphics[width=0.6\textwidth]{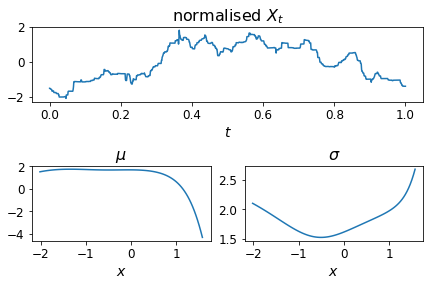}
    \caption{Top panel: Normalised data. Bottom panels: $\mu(x)$ and $\sigma(x)$ estimated from USDC-WETH cryptocurrency AMM pool.}
    \label{fig:mu-sigma-crypto}
\end{figure}

A trader may have a specific view on, e.g., the expected return in the cryptocurrency pool and / or the expected time that prices spend below some level. Using the methodology we developed in this paper, the trader then wishes to update the model estimated on historical data to reflect their beliefs. Hence, with $\mu$ and $\sigma$ estimated (under $\P$), and a specified set of constraints, the trader proceeds to estimate the optimal measure using the algorithmic steps shown in Algorithm \ref{algo:opt-measure}.
\SetKwRepeat{Do}{do}{while}
\begin{algorithm}[htbp]
	\caption{Optimal measure computation.}
	\label{algo:opt-measure}
	\KwIn{drift $\mu$, volatility $\sigma$ functions, constraint functions $\bm f$ and $\bg$ and targets $\bc$ and $\bd$\;}
	
	Initialise $\eone=\bm 0$ and $\etwo=\bm 0$\;
	
	\Do{ $|\bm k(0,x_0) |, |\bm \ell(0,x_0)| > tol$}{

        Solve for $\omega^\dagger(t,x)$ given in \eqref{eq:omega-star} by solving the  PDE
        {
        \[
          \left(\partial_t  + \mu(x)\, \partial_{x} + \tfrac{1}{2} \sigma^2(x)\,\partial_{xx}- \etwo\cdot \bg(x) \right) \omega^\dagger(t,x) = 0,
        \]
        }
        s.t. $\omega^\dagger(T,x)=e^{-\eone\cdot (\bm f(x)-\bc)}$
        using finite-difference (FD) methods\;
        
        Compute (using \eqref{eqn:lambda-h-dagger}) $\lambda^\dagger(t,x) = -\sigma(x)\,\partial_x \log\omega^\dagger(t,x)$ with FD\;
        
        Define $\bm k(t,x):=\E^{\Q^\dagger}[(\bm f(X_T)-\bc)|X_t=x]$. $\bm k(0,x_0)$ gives the terminal constraint errors in \eqref{opt-Z-lambda-h}\;
        
        Solve 
        \[
        \left(\partial_t + (\mu(x)- \sigma(x)\, \lambda^\dagger(t,x))\,\partial_x + \sigma^2(x)\,\partial_{xx}\right) \bm k(t,x) = \bm 0
        \]
        s.t. $\bm k(T,x)=\bm f(x)-\bc$ using FD\;
        
        Define $\bm \ell(t,x):=\E^{\Q^\dagger}[\int_t^T \bg(X_s)\,\diff s|X_t=x]$. $\bm \ell(0,x_0)$ gives the running constraint errors in \eqref{opt-Z-lambda-h}\;
        
        Solve 
        \[
        \left(\partial_t - \sigma(x)\, \lambda^\dagger(t,x)\,\partial_x + \sigma^2(x)\,\partial_{xx}+\bg(x)\right) \bm\ell(t,x) = \bm 0
        \]
        s.t. $\bm \ell(T,x)=\bm 0$ using FD\;
        
        Update $\eone$, $\etwo$ using an optimisation engine
	    }

	\KwOut{$\lambda^\dagger(t,x)$ which is the estimate of $\lambda^*(t,x)$\;}
\end{algorithm}

We consider two numerical examples (i) we increase $\VaR_{0.9}(X_T)$ by 10\% and decrease $\VaR_{0.5}(X_T)$ by 10\%, and (ii)
we increase the $\VaR_{0.9}(X_T)$ by 10\% and reduce the average time spent below the barrier $X_t=-0.1$ by 50\%; all percentages are relative to their values under the reference measure $\P$. The $\VaR_\alpha$ constraints are induced by constraint functions $f(x)=\Id_{\{x<q_\alpha\}}$ with constraint constants $\alpha$, i.e. $\Q(X_T \le q_\alpha) = \alpha$, and where $q_\alpha$ are the $\VaR$ values under $\Q$. The average time time spent below a barrier is achieved by imposing a running cost constraint. To this end, define
\begin{equation*}
\tau = 
\int_0^T g(X_s)\,\diff s\,
\quad \text{with}  \quad g(x) = \Id_{\{x \,\le -0.1\}}.
\end{equation*}
Thus, the constraint 
$$\E^\Q\left[\int_0^T g(X_t)\,\diff t\right] = c$$
corresponds to constraining the average time spent below the barrier $-0.1$ to be equal to $c$. 

We first investigate the case of the two $\VaR$ constraints, example (i). The histogram of $X_T$ under the reference measure $\P$ and the optimal measure $\Q^*$ is show in the left panel of Figure \ref{fig:Two-VaR-stress}. From the left panel, we observe, when comparing the distribution of $X_T$ under $\P$ with $\Q^*$, that probability mass from the centre of the distribution is pushed into the left and right tails to ensure that the median is reduced and the 90\%-quantile is increased. The right panel of Figure \ref{fig:Two-VaR-stress} shows the drift under $\Q^*$. Recall that under $\P$ the drift is a function of the process $X$ only -- see  \eqref{eq:numeric-SDE}; under $\Q^*$, however, the drift depends on the value of the process and on time. From the right panel of Figure \ref{fig:Two-VaR-stress}, we observe that the probability mass transport seen in the histograms of $X_T$ in the left panel, is achieved by having excess positive / negative drift to the right / left of the original median value. Moreover, we see upward / downward spikes at the locations of the new quantiles whose intensity increases as the terminal time approaches.
\begin{figure}[h!]
    \centering
    \begin{minipage}[c]{0.4\textwidth}
    \includegraphics[width=\textwidth]{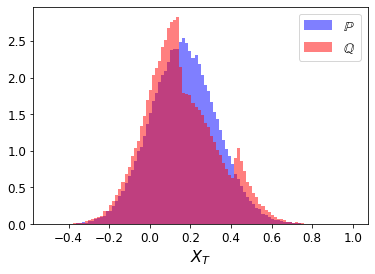}
    \end{minipage}
    \begin{minipage}[c]{0.59\textwidth}
    \includegraphics[width=\textwidth]{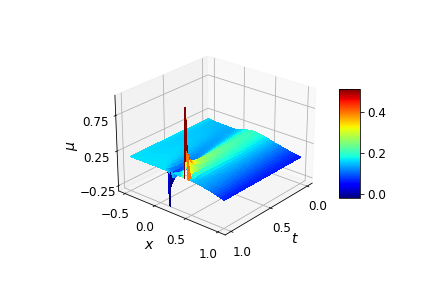}
    \end{minipage}
    \caption{Under $\Q^*$, we impose that VaR at levels $\alpha=0.5$ and $\alpha = 0.9$ are decreased and increased by 10\%, respectively. Percentage changes are relative to the corresponding values under $\P$. Left panel: histogram of $X_T$ under $\P$ (blue) and under $\Q^*$ (red). Right panel: drift of $X$ under $\Q^*$.}
    \label{fig:Two-VaR-stress}
\end{figure}

Next, we next investigate the case of a 10\% increasing in the 90\%-quantile and a 50\% reduction the average time spent below the barrier, i.e., example (ii). The right top panel of Figure \ref{fig:VaR-Barrier-stress} displays the  histogram of $\tau$ under the reference $\P$ and optimal measure $\Q^*$. The figure shows that under $\Q^*$, the amount of time spent below the barrier is more concentrated towards zero than it is under the reference measure. The top left panel shows the histogram of $X_T$, and while under $\Q^*$ the 90\%-quantile is increased, which is seen by the additional mass in the right tail, the running cost constraint moves mass away from the left tail. The bottom panel of the figure shows the drift under $\Q^*$. We observe that as the process crosses to negative values, it receives a positive drift which prevents the process from spending additional time below the barrier. The process also receives a drift if it approaches the target quantile whose intensity increases as the terminal time approaches.
\begin{figure}[h!]
    \centering
    \includegraphics[width=0.4\textwidth]{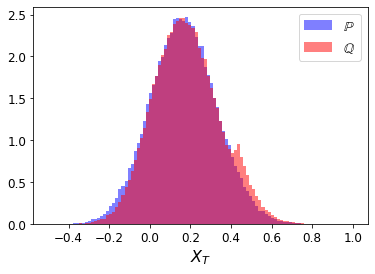}
    \qquad
    \includegraphics[width=0.4\textwidth]{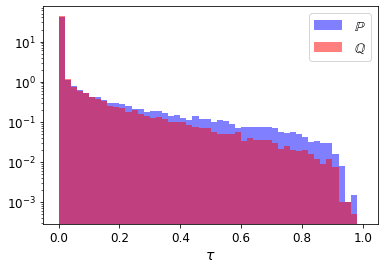}    
    \\
    \includegraphics[width=0.6\textwidth]{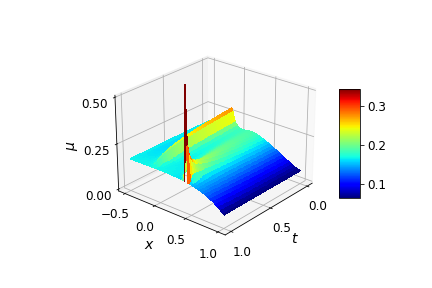}
    \caption{Under $\Q^*$, we impose that VaR at level $\alpha=0.9$ is increased by 10\% and that the average time spent below the barrier level $-0.1$ is decreased by 50\%. Percentage changes are relative to the corresponding values under $\P$. Under the $\P$-measure, the process spends about 4\% of the time below the level $-0.1$. Top left panel: histogram of $X_T$ under $\P$ (blue) and under $\Q^*$ (red). Top right: histogram of the time spent below the barrier $\tau$ under $\P$ (blue) and under $\Q^*$ (red). Bottom panel: drift of $X$ under $\Q^*$.}
    \label{fig:VaR-Barrier-stress}
\end{figure}




\begin{acks}[Acknowledgments]
\end{acks}

\begin{funding}
SJ and SP acknowledge support from the Natural Sciences and Engineering Research Council of Canada (grants RGPIN-2018-05705, RGPAS-2018-522715, and DGECR-2020-00333, RGPIN-2020-04289). We are grateful to L.~P.~Hughston for helpful comments.
\end{funding}

\bibliographystyle{imsart-number} 
\bibliography{references.bib}

\end{document}